\theoremstyle{theorem}
\newtheorem{theorem}{Theorem}%[section]
\newtheorem{lemma}[theorem]{Lemma}
\newtheorem{proposition}[theorem]{Proposition}
\newtheorem{corollary}[theorem]{Corollary}
\newtheorem{claim}[theorem]{Claim}
\theoremstyle{definition}
\newtheorem*{remark}{Remark}
\newtheorem{definition}[theorem]{Definition}
\theoremstyle{question}
\newtheorem*{bouncetheorem}{Bounce Theorem}
\newtheorem*{supporttheorem}{Support Rigidity Theorem}
\def\Flat{\operatorname{Flat}}
\def\cyl{\operatorname{cyl}}
\def\SL{\operatorname{SL}}
\def\Chain{\operatorname{Chain}}
\def\B{{\sf B}}
\def\Q{{\mathbb Q}}
\def\R{{\mathbb R}}
\def\Z{{\mathbb Z}}
\def\Curr{\operatorname{C}}
\def\supp{\operatorname{supp}}
\def\G{{\mathcal G}}
\title{You can hear the shape of a billiard table: \\ Symbolic dynamics and rigidity for flat surfaces}
\author{Moon Duchin, Viveka Erlandsson, Christopher J. Leininger,\\ and Chandrika Sadanand% 
\thanks{This work was initiated with funding from MD's NSF CAREER award DMS-1255442.  VE was partially supported by Academy of Finland project \#297258. 
CJL is partially supported by NSF grant DMS-1510034 and also acknowledges support from NSF grants DMS 1107452, 1107263, 1107367 "RNMS: GEometric structures And Representation varieties" (the GEAR Network).
CS acknowledges support from a Postdoctoral Fellowship at the Einstein Institute of Mathematics, Hebrew University.}}
\begin{document}
\maketitle

%\vspace{-.45in}

%\vfill

\begin{abstract}{We give a complete characterization of the relationship between the shape of a Euclidean polygon and the symbolic dynamics
of its billiard flow.  We prove that the only pairs of tables that can have the same bounce spectrum are right-angled tables that differ
by an affine map.  The main tool is a new theorem that establishes that a flat cone metric is completely determined 
by the {\em support} of its Liouville current.}
\end{abstract}

%\vfill

%\vspace{-.3in} 

\setcounter{tocdepth}{2}
\tableofcontents

%%%%%%%%%%%%%%%%%%%%%%%%%%%%%%%%%

\section{Introduction}

There is a well-established line of  {\em spectral rigidity} problems in geometry, 
where one tries to show that various discrete invariants of an object determine its geometry. 
In 1966 Kac memorably asked if one could {\em hear the shape of a drum}; or more precisely whether the shape of a planar domain is determined by the spectrum of its Laplacian \cite{Kac}.
In this paper we define a ``bounce spectrum" for polygons, recording the symbolic dynamics
of the billiard flow, and we explore the question of  how precisely this determines the polygon, and thus whether one can "hear" the shape of the polygon from the bounce spectrum.

\iffalse
There is a well-established line of  {\em spectral rigidity} theorems in geometry, 
where one tries to show that  discrete data---such as the spectrum of the Laplacian,
or the spectrum of lengths of geodesics---determines a metric object.  (Kac memorably termed this "hearing" the shape \cite{Kac}.)
In this paper we define a {\em bounce spectrum} for polygons, recording the symbolic dynamics
of the billiard flow, and we explore the question of  how precisely this determines the polygon.
\fi

Every Euclidean polygon $P$ supports a {\em billiard flow}, in which a point mass in the interior travels in 
a straight line until it hits a wall, then bounces off the wall with {\em optical reflection}:  angle of incidence equals angle of reflection.
Trajectories hitting corners are regarded as singular and discarded.  A polygon together with this billiard flow constitutes a dynamical system
called a billiard table (or simply {\em table}) below.

Suppose the sides of a simply connected (finite-sided, not necessarily convex) 
polygon $P$ are labeled in cyclic order by letters from an ordered alphabet $\mathcal A$.
Then for a nonsingular billiard path,  $\gamma \colon \mathbb R \to P$, we can record 
the labels of the sides of $P$ in the order that they are encountered along $\gamma$, 
forming a bi-infinite sequence of labels called the {\em bounce sequence}, an element of $\mathcal A^\Z$.  
Here the zeroth term of the sequence corresponds to the first side encountered on $\gamma([0,\infty))$, and so starting sooner or later along a trajectory amounts to shifting the corresponding
bounce sequence.
The set of all bounce sequences that can occur on $P$ is denoted $\B(P)$ and called the {\em bounce spectrum}
of the billiard table.  
This closely resembles the construction of {\em cutting sequences}, a fundamental concept in symbolic 
dynamics.
If there exist cyclic labelings of the sides of polygons $P_1$ and $P_2$ with letters from $\mathcal A$ which induce a bijection of bounce spectra, then
 we will identify the spectra and write $\B(P_1)=\B(P_2)$. 

It is immediately clear that applying a {\em similarity} (any combination of dilation, reflection, and rotation)  to a polygonal table
does not change its bounce spectrum.
Upon investigation, one quickly observes that the coordinate-affine map given by $\left(\begin{smallmatrix} a&0 \\ 0 & b\end{smallmatrix}\right)$ preserves 
optical reflection in horizontal and vertical edges.  
It follows that any two rectangles $R_1,R_2$  have the same bounce spectrum $\B(R_1)=\B(R_2)$, and more generally that
 $\B(P_1)=\B(P_2)$ for any two right-angled tables (all angles in $\frac\pi 2 \mathbb N$) 
that are related by an affine map.   
Outside of this exception, we get the strongest possible rigidity result:  the bounce spectrum determines not only the precise angles 
in the polygon, but also the proportions of sidelengths.

\begin{bouncetheorem} If two simply connected Euclidean polygons $P_1,P_2$ have 
$\B(P_1)=\B(P_2)$, then either $P_1,P_2$ are right-angled and affinely equivalent, or they are 
similar polygons.
\end{bouncetheorem}

\begin{remark} Pursuing the analogy between rigidity for the bounce spectrum and the length spectrum, we can regard
the cyclic labeling of edges as a way to identify sides in one polygon with sides in the other; this is like the {\em marked} length 
spectral problem for manifolds, in which a correspondence of curves is specified.
 It is natural to wonder about the unmarked problem for billiard tables.
In a new preprint, Calderon-Coles-Davis-Lanier-Oliveira \cite{constructive} prove that for an arbitrary labeling of the sides of a polygon,  
one can "hear" edge adjacency (i.e., detect which labels correspond to incident edges) from the  bounce sequences.
Combining this result with ours, we obtain bounce-spectral rigidity in the unmarked case as well.

This theorem also shows that the Laplace spectrum and the bounce spectrum contain different information.  It is well known that the Laplace spectrum determines the area and perimeter of 
a plane domain, from which it follows that rectangles can be distinguished by their Laplace spectra, while all having the same bounce spectrum.  
On the other hand, the "propeller pair" constructions of  Gordon--Webb--Wolpert \cite{GWW} have the same Laplace spectrum but, by our main theorem, different bounce 
spectra.\footnote{See also \cite{constructive,LuRowlett}, in which the authors investigate what geometric properties of a polygon can be reconstructed from just a finite part 
of the bounce spectrum, Laplace spectrum, or length spectrum.}
\end{remark}

In the special case of rational tables (all angles in $\pi\Q$), there is a well-established toolkit for studying the billiard flow
centered on the "unfolding" 
to a closed {\em translation surface} and all the machinery that comes with that.
A recent sequence of papers of Bobok--Troubetzkoy \cite{BT1,BT2,BT3} 
culminates in a proof of symbolic rigidity (as in the Bounce Theorem above) when one of the tables is assumed to be rational.
In fact, their results are stronger in that setting: they show that only part of the bounce spectrum 
(the bounce sequence of a single generic trajectory, or the subset of periodic sequences) suffices to determine the table.

Our proof works for general tables, and we note that this  setting is quite different; for instance,
rational tables have periodic trajectories in a dense set of directions, while it is unknown if all irrational tables (or even all triangular ones!)
have even a single periodic trajectory.\footnote{Indeed, in \cite{SchwartzObI}, Schwartz proves that there is a sequence of triangles converging to the 
$(30,60,90)$ triangle for which the combinatorial length of the shortest periodic billiard trajectory tends to infinity.  See also Hooper \cite{Hooper} for more on instability of periodic trajectories.}

%%%%%%%NEW TEXT FOLLOWS%%%%%%
Our proof utilizes essentially the entire bounce spectrum in a crucial way.  On the other hand, any set that determines the bounce spectrum clearly also determines the shape of the table by the  Bounce Theorem.  For example, we have the following corollary.  A {\em generalized diagonal} in a Euclidean polygon $P$ is a billiard trajectory $\gamma \colon [a,b] \to P$ that starts and ends at a vertex of the polygon.  Given a cyclic labeling of the sides of $P$, a generalized diagonal induces a {\em finite} bounce sequence, and we let $\B_\Delta(P)$ denote the {\em countable} set of bounce sequences of generalized diagonals.

\begin{corollary} \label{C:gen diag} If two simply connected Euclidean polygons $P_1,P_2$ have $\B_\Delta(P_1)=\B_\Delta(P_2)$, then either $P_1,P_2$ are right-angled and affinely equivalent, or they are 
similar polygons.
\end{corollary}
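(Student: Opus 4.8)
The plan is to derive the corollary from the Bounce Theorem by showing that the countable set $\B_\Delta(P)$ already determines the full bounce spectrum $\B(P)$, and in a way that respects cyclic labelings. Fix cyclic labelings of the sides of $P_1$ and $P_2$ realizing the identification $\B_\Delta(P_1)=\B_\Delta(P_2)$; I claim that these same labelings give $\B(P_1)=\B(P_2)$, after which the Bounce Theorem immediately yields the stated dichotomy. The bridge is the \emph{language}: let $L(P)$ denote the set of finite words in $\mathcal A$ occurring as a subword of some bi-infinite sequence in $\B(P)$, and let $L_\Delta(P)$ denote the set of finite words occurring as a subword of some finite sequence in $\B_\Delta(P)$. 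By a standard fact about subshifts, since $\B(P)$ is shift-invariant the subshift it generates is precisely the set of bi-infinite sequences all of whose finite subwords lie in $L(P)$; so it suffices to prove $L(P)=L_\Delta(P)$ and to check that $\B(P)$ is itself closed (a limit of bounce sequences of nonsingular trajectories with convergent initial data is realized by the limit trajectory unless that trajectory runs into a corner, in which case the bounce sequences are not Cauchy near that corner). Both of these statements are labeling-independent.

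The inclusion $L_\Delta(P)\subseteq L(P)$ is routine. If $\delta\colon[a,b]\to P$ is a generalized diagonal, then its restriction to $(a,b)$ is a genuine nonsingular billiard path, so any subword $v$ of $w(\delta)$ is realized by a compact nonsingular subsegment $\gamma\colon[s,t]\to P$ with $[s,t]\subset(a,b)$. The set of states in the unit tangent bundle of $P$ whose trajectory avoids every corner in forward and backward time has full measure, since the corners are finitely many points, hence null, and the billiard flow is measure-preserving; so inside the nonempty open set of states agreeing combinatorially with $\gamma$ over $[s,t]$ — a set of positive measure — there is one extending to a bi-infinite nonsingular trajectory. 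Its bounce sequence lies in $\B(P)$ and contains $v$, so $v\in L(P)$.

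The reverse inclusion $L(P)\subseteq L_\Delta(P)$ is the heart of the matter, and where I expect the real work. Given a realizable finite word $v$, let $U_v$ be the set of states whose forward trajectory realizes $v$ as an initial segment of its bounce word over the corresponding finite time window; $U_v$ is nonempty by realizability and open because the combinatorial type over a fixed finite window is locally constant away from corner-hitting trajectories. What I need is a state lying on a generalized diagonal whose bounce word contains $v$ — equivalently, a state in $U_v$ whose forward orbit and whose backward orbit each run into a vertex. This is a two-sided density statement for generalized diagonals in a polygonal billiard. The one-sided version is standard and not hard: the successive landing points of a trajectory on edges vary continuously and nonconstantly with the initial direction, so from any state one can steer into a vertex using an arbitrarily small change of direction, and hence the states with singular forward (resp.\ backward) orbit are dense. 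To combine the two sides I would flow $U_v$ forward past the $v$-block to an open set $U_v'$, pick $y\in U_v'$ whose forward orbit hits a vertex along some fixed combinatorial path, restrict to the one-parameter family $\mathcal F\ni y$ of states whose forward orbit continues to hit that same vertex along that path, and then flow $\mathcal F$ backward: each backward iterate of $\mathcal F$ is an embedded curve in the (compact) phase section, so unless some backward iterate runs into a corner — which is exactly a state of $\mathcal F$ with singular backward orbit, giving a generalized diagonal through $U_v$ — one must rule out the alternative by a distortion or equidistribution argument forcing the family to spread across a vertex. Closing this last gap cleanly, in the stated generality (no rationality hypothesis, possible nonconvexity), is the main obstacle; granting it, $L(P)=L_\Delta(P)$, so $\B(P)$ is reconstructed from $\B_\Delta(P)$ compatibly with labelings, and the Bounce Theorem completes the proof.
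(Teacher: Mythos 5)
Your reduction to the Bounce Theorem is the right move, and your inclusion $L_\Delta(P)\subseteq L(P)$ is fine, but the strategy breaks at a point you treat as a routine check: $\B(P)$ is \emph{not} closed in ${\mathcal A}^{\mathbb Z}$, so it cannot be recovered from its language. The justification you give (``\dots the bounce sequences are not Cauchy near that corner'') is false: if a sequence of nonsingular trajectories degenerates onto a trajectory through a corner, approaching it from one fixed side, the bounce sequences do converge --- namely to the bounce sequence of the limiting \emph{singular basic geodesic} (a geodesic through a cone point of the unfolding, making angle $\pi$ on one side). Such limit sequences lie in $\overline{\B(P)}$ but are in general realized by no nonsingular trajectory: as shown in the proof of Lemma~\ref{L:find singular} via the Flat Strip Theorem, a sequence realized by both a singular and a nonsingular basic geodesic must be periodic, and non-periodic singular basic geodesics certainly exist. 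So even granting $L(P)=L_\Delta(P)$, your subshift reconstruction only yields $\overline{\B(P)}$, and you are still left with the problem of deciding which elements of $\overline{\B(P)}$ actually lie in $\B(P)$. This is exactly the content of Lemma~\ref{L:find singular}: the singular sequences are detected by matching tails of elements of $\overline{\B(P)}$ against the one-sided-infinite limits of the zero-padded sequences in $\B_\Delta(P)$; they are removed, and the periodic ones are added back (these are always realized by periodic, hence nonsingular, trajectories by \cite{GKT}). Your proposal has no substitute for this step.

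The second problem is the one you flag yourself: the inclusion $L(P)\subseteq L_\Delta(P)$, i.e.\ a two-sided density statement for generalized diagonals, is left open, and your sketch (flowing a one-parameter ``hits a fixed vertex forward'' family backward and hoping it spreads across a vertex) is not yet an argument. Note that the paper does not need subword containment at this level of precision: it works with closures in the compact space ${\mathcal A}_0^{\mathbb Z}$ and identifies generalized diagonals with saddle connections in an unfolding, so the relevant approximation statement becomes density of saddle connections among basic geodesics; moreover the one-sided-infinite limit points of $\B_\Delta(P)$ --- which your language-based formulation discards --- are precisely what carry the extra information needed to resolve the first problem. I would rework the argument in terms of $\overline{\B_\Delta(P)}\subset{\mathcal A}_0^{\mathbb Z}$ rather than in terms of languages.
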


The Bounce Theorem is a consequence of a result about geodesic currents associated to flat metrics.
To state this result, we require a few definitions (see \S\ref{S:preliminaries} below for fuller background). 
%%%%%%%%%%%%%%%%END NEW TEXT%%%%%%%%%%%
The space of equivalence classes of {\em flat metrics} (nonpositively curved Euclidean cone metrics)
on a closed, oriented surface $S$  is denoted $\Flat(S)$.  Associated to each metric $\varphi \in \Flat(S)$ is a geodesic current $L_\varphi$ called the {\em Liouville current}, 
which is formally a measure on the double boundary of the universal cover of $S$.  
We affirmatively answer an open question of Bankovic-Leininger showing that the support of $L_\varphi$ determines $\varphi$, up to affine deformation (see \cite[Section 6]{BL}). That is, a flat metric is not only determined by its geodesic current but even by the support alone, which is quite different from the hyperbolic case where currents have full support. This Support Rigidity Theorem yields the Bounce Theorem via a new unfolding technique we introduce below.
\begin{supporttheorem}
Suppose $\varphi_1,\varphi_2$ are two unit-area flat metrics whose Liouville currents have the same support,
$\supp(L_{\varphi_1}) = \supp(L_{\varphi_2})$.  Then $\varphi_1,\varphi_2$ differ by an affine deformation, up to isotopy.  If either metric has holonomy of order greater than $2$ (i.e., is not induced by a quadratic differential, or does not support a foliation by straight lines), 
then equal support implies that $\varphi_1$ and $\varphi_2$ differ by isometry, isotopic to the identity.
\end{supporttheorem}

We briefly describe the steps involved in the proof of the Support Rigidity Theorem.
A key observation for carrying out this program is that the support of $L_\varphi$ consists precisely of the closure of the set of nonsingular $\tilde \varphi$--geodesics in $\tilde S$; see Proposition~\ref{P:support_are_basic_endpoints} in \S\ref{S:chains section}.  
We then use a technical device called chains (see \S 2.5) to fix an identification of the cone points, and from these an identification of the saddle connections and the nonsingular geodesics, between $\varphi_1$ and $\varphi_2$.
%One important technical step along the way is to show that any flat half-strip (interval-cross-ray) appearing in the universal cover of a flat metric
%must project to a Euclidean cylinder in the flat metric; we call this the Flat Strip Theorem and it is the main content of \S\ref{S:Flat strips}.
This lets us build a careful correspondence of triangulations between $\varphi_1$ and $\varphi_2$, and even of directions of travel
as trajectories cross the edges of a triangulation.
This can be used to show that the holonomy group (the rotations observed
when transporting a tangent vector around a loop) is the same for the two metrics.  

Our goal is to construct an affine map $(S,\varphi_1) \to (S,\varphi_2)$ isotopic to the identity.  We carry this out by picking a geometric triangulation of $S$ with respect to $\varphi_1$ and then proving that the map to the corresponding triangulation in $\varphi_2$
can be adjusted by an isotopy first on the vertices, then on the edges, and finally triangles, to produce a map which is affine on each of the triangles.  Next we must analyze 
the piecewise affine maps and show that they are globally affine, and in fact isometries, up to the controlled exceptions identified in the 
theorem statement.

To prove the Bounce Theorem from the Support Rigidity Theorem, we consider arbitrary nonpositively curved "unfoldings" $X$ of a polygon 
$P$  and observe that, as with the  unfoldings to translation surfaces in the rational case, the nonsingular geodesics on $X$ correspond to billiard trajectories in $P$.  
We have seen that the set $\supp(L_\varphi)$ coarsely encodes (the closure of) the set of nonsingular $\varphi$--geodesics.  
The bounce spectrum of $P$ records how geodesics cut through edges of a triangulation and  can thus be viewed as providing the same coarse information.  
From two polygons $P_1,P_2$ with $\B(P_1) = \B(P_2)$,  we can find a pair of nonpositively curved unfoldings $X_1,X_2$ with a common underlying topological surface $S$ such that their  nonsingular geodesics can be identified.  
Appealing to the Support Rigidity Theorem, we conclude that
$X_1,X_2$ differ by an affine map (generically an isometry, or similarity if we do not normalize the areas).  With some care, this  
induces a suitable affine map between $P_1$ and $P_2$. For Corollary~\ref{C:gen diag}, we observe that generalized diagonals correspond to saddle connections in the unfolding and use limits of codings to understand the nonsingular geodesic as limits of saddle connections.

The ideas used to deduce the Bounce Theorem from the Support Rigidity Theorem can be applied to other settings as well:  
any data that coarsely determines the nonsingular geodesics of a flat surface can be seen to determine the flat metric, up to affine equivalence.  For example, in Section~\ref{bounce} we sketch a similar result for {\em cutting sequences}.

\subsection{Connections to symbolic dynamics literature.}  The results of Bobok and Troubetzkoy mentioned above 
\cite{BT1,BT2,BT3} are directly relevant to our work, 
and the results of Calderon et al \cite{constructive} give constructive results on the exact questions treated here.
These are part of a substantial literature 
relating  symbolic dynamics  to the geometry of billiards and associated flat surfaces.  Variants on bounce sequences 
include cutting sequences and Sturmian sequences.
Characterizing the sequences occurring in Veech surfaces (such as regular polygons and square-tiled surfaces) 
appears in the work of Morse and Hedlund from the 1930s and 40s \cite{MorseHedlund,MorseHedlundII} and includes very recent work of 
Smillie--Ulcigrai \cite{SmillieUlcigrai,SmillieUlcigrai2}, Davis \cite{Davis1,Davis2},  Davis--Pasquinelli--Ulcigrai \cite{DavPasUlc}, and 
Johnson \cite{CCJohnson}. Beyond the Veech case, there is a long string of papers of Lopez--Narbel developing a language-theoretic
formulation of generalized Sturmian sequences for interval exchange transformations, beginning with \cite{LopezNarbel}.

Complexity of billiards  has been studied via symbolic coding by Katok \cite{KatokGR}, Hubert \cite{HubertComp},  Troubetzkoy \cite{TroubetzkoyComp}, and  Hubert--Vuillon \cite{HubertVuillon}.  
From the point of view of determining geometric information from a bounce sequence, Galperin, Kr\"uger, and Troubetzkoy \cite{GKT}, for example, prove sharp relationships between periodic trajectories and periodic bounce sequences, which in turn influenced the Flat Strip Theorem of Hassell--Hillairet--Marzuola \cite{HHM} (Theorem~\ref{T:flat strips} below).  
We refer the reader to these
works and their references for further discussion of the many connections between symbolic coding and geometry.

\subsection*{Acknowledgments} The authors would like to thank all the participants and visitors to the Polygonal Billiards Research Cluster held at Tufts University in Summer 2017.
In particular, we thank Pat Hooper, Rich Schwartz,  Caglar Uyanik, and the authors of \cite{constructive} for illuminating conversations on billiards and symbolic dynamics.   Erlandsson and Leininger would like to thank the School of Mathematics at Fudan University and the Mathematics Research Centre and the University of Warwick, respectively, for their hospitality while this project was being completed.

%%%%%%%%%%%%%%%%%%%%%%%%%%%%%%%%%%
%%%%%%%%%%%%%%%%%%%%%%%%%%%%%%%%%%
\section{Preliminaries} \label{S:preliminaries}
%%%%%%%%%%%%%%%%%%%%%%%%%%%%%%%%%%
%%%%%%%%%%%%%%%%%%%%%%%%%%%%%%%%%%

In this section we discuss background, establish notation, and give preliminary results we will use in this paper.  
Throughout, $S$ will denote a closed, oriented surface of genus at least $2$ and $p \colon \tilde S \to S$ will denote the universal 
covering map.  We also fix an action of $\pi_1S$ on $\tilde S$ by covering transformations.

%%%%%%%%%%%%%%%%%%%%%%%%%%%%%%%%%%
%%%%%%%%%%%%%%%%%%%%%%%%%%%%%%%%%%
\subsection{Spaces of geodesics} \label{S:spaces of geodesics}
%%%%%%%%%%%%%%%%%%%%%%%%%%%%%%%%%%
%%%%%%%%%%%%%%%%%%%%%%%%%%%%%%%%%%

Fix once and for all an arbitrary hyperbolic metric $\rho$ on $S$, and let $\tilde \rho = p^*(\rho)$ be the pullback to $\tilde S$, thus 
specifying an identification of  $\tilde S$ with the hyperbolic plane.  Let $S^1_\infty$ denote the circle at infinity bounding $\tilde\rho$, and equip it with the action of $\pi_1S$ obtained by extending the action on $\tilde S$.  Given any other geodesic metric $m$ on $S$, with the pullback $\tilde m = p^*(m)$ on $\tilde S$, the identity on $\tilde S$ is a $\pi_1S$--equivariant quasi-isometry to $\tilde \rho$, and hence the Gromov boundary of $\tilde m$ is identified with $S^1_\infty$; see \cite[Chapter III.H.3]{BridHaef}.  In particular we view $S^1_\infty$ as the Gromov boundary of the pullback of {\em any} metric on $S$.

Let $\G(\tilde S)$ denote the set of unordered pairs of distinct points in $S^1_\infty$, that is, 
\[ \G(\tilde S) = \{ \{x,y\} \mid x,y \in S^1_\infty, \, x \neq y \} = \left(S^1_\infty \times S^1_\infty \setminus\Delta\right)/\sim,\]
where $\Delta$ is the diagonal and $\sim$ is the equivalence relation $(x,y)\sim(y,x)$.  The action of $\pi_1S$ on $S^1_\infty$ determines an action on $\G(\tilde S)$.

For any geodesic metric $m$ on $S$ an $\tilde m$--geodesic on $\tilde S$ is a map $\tilde \gamma \colon I \to \tilde S$ which is an isometric embedding from a (finite, infinite, or bi-infinite) interval  $I \subset \mathbb R$ to $(\tilde S,\tilde m)$.  
Below, we will use the term {\em geodesic} for the map and its image interchangeably.  If we need to distinguish them, we 
will refer to parametrized and unparametrized geodesics.  Downstairs on $S$, an $m$--geodesic 
$\gamma$ is the composition of an $\tilde m$--geodesic with the covering projection; that is, $\gamma = p \circ \tilde \gamma \colon I \to S$.  
These are locally isometric embeddings, but for arbitrary metrics, not all locally isometric embeddings to $S$ are obtained in this way.  However, 
for nonpositively curved metrics (e.g., the hyperbolic metric $\rho$ or flat metrics), every locally isometric embedding $\gamma \colon I \to S$ lifts 
to an isometric embedding since $\tilde S$ is a CAT(0) space;  see \cite[Proposition~II.1.4]{BridHaef}.  

We let $\G(\tilde m)$ denote the space of all unparametrized bi-infinite $\tilde m$--geodesics on $\tilde S$ given the Chabauty-Fell topology (see \cite{Fell,Chabauty} and \cite{CEG}) equipped with the action of $\pi_1S$ induced by its action on $\tilde S$.  A sequence $\{\tilde \gamma_n\}_{n=1}^\infty$ in $\G(\tilde m)$ converges to a geodesic $\tilde \gamma$ if and only if the geodesics can be parametrized so that the sequence of parametrized geodesics converges uniformly on compact sets to the parametrization of $\tilde \gamma$.\footnote{In \cite{BL}, the topology of 
$\G(\tilde m)$ is incorrectly stated to be the quotient of the compact-open topology by forgetting the parametrizations.  
The property that convergence implies the existence of parametrizations which converge locally uniformly is all that is used, however.}
There is a continuous, closed, proper, surjective map 
\[ \partial_{\tilde m} \colon \G(\tilde m) \to \G(\tilde S)\]
defined by setting $\partial_{\tilde m}(\tilde \gamma)$ to be the endpoints at infinity of the geodesic.  When $\tilde m$ has negative curvature, 
$\partial_{\tilde m}$ is a homeomorphism---in particular, $\partial_{\tilde \rho}$ is a homeomorphism.
Below, we will build up to a better understanding of $\partial_{\tilde \varphi}$ for the case of flat metrics $\varphi$.

We say that pairs $\{x,y\},\{x',y' \} \in \G(\tilde S)$ {\em link} if the two points $x'$ and $y'$ are in different components of $S^1_\infty \setminus \{x,y\}$.  A pair of $\tilde m$--geodesics with linking endpoints necessarily intersect each other, though even when $\tilde m$ is CAT(0),
the intersection may not be transverse (see the left-hand side of Figure~\ref{F:crossing and betweenness}).  
If $\{x,y\},\{x',y' \} \in \G(\tilde S)$ do not link, then we say that $\{x'',y''\}$ is {\em between} $\{x,y\}$ and $\{x',y'\}$ if, with respect to some choice of ordering of each of the three pairs, the six points appear cyclically as $x \leq x'' \leq x' \leq y' \leq y'' \leq y$ as in the right-hand side of Figure~\ref{F:crossing and betweenness}.  The terminology is suggestive of the behavior of geodesics with those endpoints (although for general metrics, the actual intersection patterns of geodesics can be more complicated).  We write $[\{x,y\},\{x',y'\}]$ for the set of all pairs $\{x'',y''\}$ between $\{x,y\}$ and $\{x',y'\}$.

\begin{figure}[htb]
\begin{center}
\begin{tikzpicture}[scale = .8]
%%%%%%%% Left hand side
\draw (0,0) circle (3);
\draw (2.12132,-2.12132) -- (0,-1) -- (0,1.5) -- (.7764,2.89777);
\draw (-.7764,-2.89777) -- (0,-1) -- (0,1.5) -- (-2.12132,2.12132); 
\draw[fill=black] (2.12132,-2.12132) circle (.04cm);
\draw[fill=black]  (.7764,2.89777) circle (.04cm);
\draw[fill=black]  (-.7764,-2.89777) circle (.04cm);
\draw[fill=black]  (-2.12132,2.12132) circle (.04cm);
\node at (-.8,-2) {$\tilde \gamma$};
\node at (-1.5,1.6) {$\tilde \gamma'$};
\node at (.7,2.2) {$\tilde \gamma$};
\node at (1.6,-1.5) {$\tilde \gamma'$};
\node[above] at (.7764,2.89777) {$x$};
\node[below] at (-.7764,-2.89777) {$y$};
\node[above] at (-2.12132,2.12132) {$x'$};
\node[below] at (2.22132,-2.02132) {$y'$};
%%%%%%  Right hand side
\draw (8,0) circle (3);
\draw [domain=-45:45] plot ({3.75736+3*cos(\x)}, {3*sin(\x)});
\draw [domain=135:225] plot ({12.24264+3*cos(\x)}, {3*sin(\x)});
\draw (8,3) -- (8,-3);
\draw[fill=black] (10.12132,-2.12132) circle (.04cm);
\draw[fill=black] (10.12132,2.12132) circle (.04cm);
\draw[fill=black] (5.87868,-2.12132) circle (.04cm);
\draw[fill=black] (5.87868,2.12132) circle (.04cm);
\draw[fill=black] (8,3) circle (.04cm);
\draw[fill=black] (8,-3) circle (.04cm);
\node[below] at (10.12132,-2.12132) {$x'$};
\node[above] at (10.12132,2.12132) {$y'$};
\node[below] at (5.87868,-2.12132) {$x$};
\node[above] at (5.87868,2.12132) {$y$};
\node[above] at (8,3) {$y''$};
\node[below] at (8,-3) {$x''$};
\node[right] at (8,.2) {$\tilde \gamma''$};
\node[left] at (6.5,-1) {$\tilde \gamma$};
\node[right] at (9.5,1) {$\tilde \gamma'$};
\end{tikzpicture}
\caption{{\bf Left:} Two geodesics with linking endpoints that do not meet transversely. {\bf Right:} A geodesic $\tilde \gamma''$ between geodesics $\tilde \gamma$ and $\tilde \gamma'$, illustrating the notion of betweenness of their endpoints, $\{x'',y''\} \in [\{x,y\},\{x',y'\}]$ in $\G(\tilde S)$.}
\label{F:crossing and betweenness}
\end{center}
\end{figure}
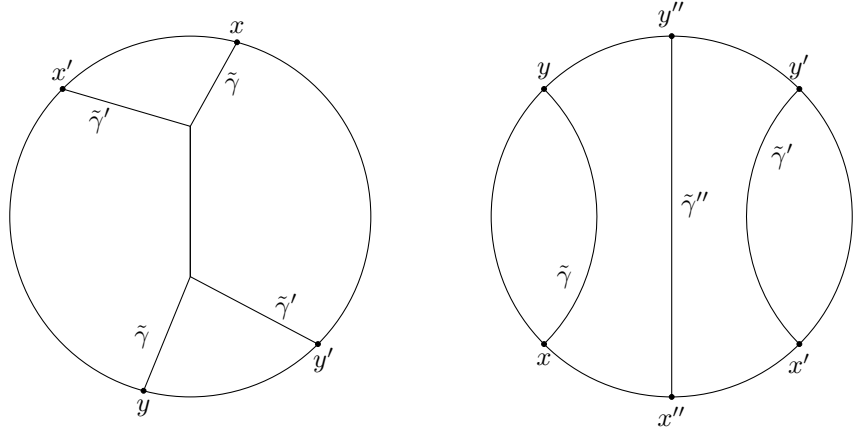

%%%%%%%%%%%%%%%%%%%%%%%%%%%%%%%%%%
%%%%%%%%%%%%%%%%%%%%%%%%%%%%%%%%%%
\subsection{Geodesic currents}\label{sec_currents}
%%%%%%%%%%%%%%%%%%%%%%%%%%%%%%%%%%
%%%%%%%%%%%%%%%%%%%%%%%%%%%%%%%%%%

A \emph{geodesic current} on $S$ is a $\pi_1S$-invariant Radon measure on $\mathcal{G}(\tilde{S})$.  Let $\Curr(S)$ denote the set of all geodesic currents on $S$ with the weak* topology.  Any essential closed curve $\gamma$ on $S$ determines a geodesic current of the same name which is the counting measure on the set of pairs of endpoints on $S^1_\infty$ of $p^{-1}(\gamma)$.  In fact, Bonahon proved that 
currents are a completion of the set of closed curves, in the sense that 
the set of real multiples of associated currents
$$\{t\cdot\gamma\,|\,t\in\mathbb{R}_+, \gamma\subset S\text{ closed curve}\}$$
forms a dense subset of $\Curr(S)$.  The geometric intersection number between two closed curves $\gamma$ and $\delta$ is the minimal number of transverse double-points of intersection among all curves $\gamma'$ and $\delta'$ homotopic to $\gamma$ and $\delta$, respectively.  This is realized  by the $\rho$--geodesic representatives, and Bonahon proved that the geometric intersection number has a continuous extension to the full space of  currents.
\begin{theorem}[\cite{Bonahon,Bonahon2}] \label{T:geometric intersection form}
There is a continuous, bilinear function
\[ \iota \colon \Curr(S) \times \Curr(S) \to \mathbb R \]
such that for every pair of closed curves $\gamma,\delta$, with associated geodesic currents of the same name, $\iota(\gamma,\delta)$ 
recovers the geometric intersection number.
\end{theorem}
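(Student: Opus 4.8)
The plan is to build $\iota$ directly as the total mass of a product measure on a fundamental domain for the pairs of \emph{linking} geodesics, then verify bilinearity, finiteness, and continuity; uniqueness of such an extension will be automatic from the density of weighted multicurves. First I would record the geometric picture behind the definition: for the $\rho$--geodesic representatives of closed curves $\gamma,\delta$, each transverse intersection point of $\gamma$ and $\delta$ in $S$ corresponds to a $\pi_1S$--orbit of transverse intersections of a lift of $\gamma$ with a lift of $\delta$ in $\tilde S$, and such an intersection is recorded precisely by the pair of linking endpoint--pairs in $\G(\tilde S)\times\G(\tilde S)$. Writing $\mathcal L\subset\G(\tilde S)\times\G(\tilde S)$ for the open, $\pi_1S$--invariant set of linking pairs, this says $i(\gamma,\delta)=(\gamma\times\delta)(\mathcal D)$ for any Borel fundamental domain $\mathcal D$ of the $\pi_1S$--action on $\mathcal L$ (with $\gamma,\delta$ now denoting the associated counting--measure currents). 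So the definition to test is
\[ \iota(\mu,\nu):=(\mu\times\nu)(\mathcal D). \]

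For this to make sense I would first note that the crossing--point map $\mathcal L\to\tilde S$ is continuous and $\pi_1S$--equivariant; combined with the proper discontinuity of the $\pi_1S$--action on $\tilde S$ and torsion--freeness of $\pi_1S$, this shows the action on $\mathcal L$ is free and properly discontinuous, so Borel fundamental domains exist, and $\pi_1S$--invariance of $\mu\times\nu$ makes $(\mu\times\nu)(\mathcal D)$ independent of the choice of $\mathcal D$. Bilinearity of $\iota$ is inherited from bilinearity of $(\mu,\nu)\mapsto\mu\times\nu$, and the paragraph above shows $\iota$ restricts to the geometric intersection number on closed curves. For finiteness I would choose $\mathcal D$ concretely: fix an open fundamental domain $F$ for the $\pi_1S$--action on $\tilde S$ with $\overline F$ compact, and let $\mathcal D$ consist of the linking pairs whose crossing point lies in $F$. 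The key point is that $\overline{\mathcal D}$ is compact in $\G(\tilde S)\times\G(\tilde S)$, because the set of geodesics meeting the fixed compact set $\overline F$ is compact in $\G(\tilde S)$ --- geodesics with both endpoints in a sufficiently short arc of $S^1_\infty$ avoid $\overline F$, so the endpoint pairs of geodesics through $\overline F$ stay off the diagonal --- and $\overline{\mathcal D}$ lies in the square of this set. Since $\mu\times\nu$ is Radon, $\iota(\mu,\nu)\le(\mu\times\nu)(\overline{\mathcal D})<\infty$.

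Continuity is the main obstacle, and it is where the real content sits. Weak$^*$ convergence $\mu_n\to\mu$ and $\nu_n\to\nu$ gives $\mu_n\times\nu_n\to\mu\times\nu$ weak$^*$ on $\G(\tilde S)\times\G(\tilde S)$, so I would try to apply the portmanteau theorem on the compact set $\overline{\mathcal D}$. The difficulty is that $\partial\mathcal D$ can carry positive $\mu\times\nu$--mass: besides the harmless part $\{\text{crossing point}\in\partial F\}$, it also contains a piece of the diagonal of $\G(\tilde S)\times\G(\tilde S)$ (limits of near--parallel geodesics through $\overline F$ degenerating to a single geodesic), whose $\mu\times\nu$--measure is $\sum_a\mu(\{a\})\nu(\{a\})$ and is nonzero exactly when $\mu$ and $\nu$ share a closed--curve component. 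To get around this I would truncate: for $\epsilon>0$ let $\mathcal D_{\ge\epsilon}\subset\mathcal D$ be the pairs crossing at $\rho$--angle at least $\epsilon$. One checks that $\overline{\mathcal D_{\ge\epsilon}}$ is a compact subset of $\mathcal L$ itself --- a limit of such pairs crosses at a persistent positive angle and so cannot collapse onto the diagonal --- and that for generic choices of $F$ and $\epsilon$ the boundary $\partial\mathcal D_{\ge\epsilon}$ is $(\mu\times\nu)$--null, since the only way a hypersurface such as $\{\text{crossing point}\in\partial F\}$ or $\{\text{crossing angle}=\epsilon\}$ can be charged is by a crossing of a lift of a $\mu$--atom with a lift of a $\nu$--atom, and there are only countably many of those to avoid. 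Portmanteau then gives $(\mu_n\times\nu_n)(\mathcal D_{\ge\epsilon})\to(\mu\times\nu)(\mathcal D_{\ge\epsilon})$, while $(\mu\times\nu)(\mathcal D\setminus\mathcal D_{\ge\epsilon})\to 0$ as $\epsilon\to0$ by the finiteness estimate.

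Thus the whole theorem reduces to the uniform tail bound
\[ \lim_{\epsilon\to 0}\ \sup_n\ (\mu_n\times\nu_n)(\mathcal D\setminus\mathcal D_{\ge\epsilon})=0, \]
which says that the $\mu_n\times\nu_n$--mass carried by near--tangent pairs of geodesics crossing inside a fixed compact set is small uniformly in $n$. This is the technical heart, and it is the step I expect to be hardest: it is where one genuinely uses the structure of $\mathcal L$ --- equivalently, the transverse--flux description of a geodesic current. The point is that the $\nu_n$--measure of geodesics crossing a fixed compact transversal at angle less than $\epsilon$ is $O(\epsilon)$ times the total transverse flux of $\nu_n$ across it, and that flux is controlled by $\iota(\mu_n,L_\rho)$ and $\iota(\nu_n,L_\rho)$, which are bounded along the sequence using only finiteness and the easy (upper semicontinuity for closed sets) half of portmanteau already in hand. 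Once $\iota$ is known to be continuous and bilinear, uniqueness is immediate: the products $t\gamma\times s\delta$ of weighted closed curves are dense in $\Curr(S)\times\Curr(S)$, so a continuous bilinear extension of the geometric intersection number is forced.
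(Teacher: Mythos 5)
First, a point of reference: the paper does not prove this statement --- it is quoted from Bonahon's work \cite{Bonahon,Bonahon2} --- so there is no internal proof to compare against. Your construction, $\iota(\mu,\nu)=(\mu\times\nu)(\mathcal D)$ for a Borel fundamental domain $\mathcal D$ of the $\pi_1S$--action on the linking locus, is exactly Bonahon's definition, and your treatment of well-definedness (via the crossing-point map), bilinearity, finiteness, and the recovery of geometric intersection numbers for closed curves is correct. You have also correctly located the entire difficulty in the uniform near-diagonal estimate $\sup_n(\mu_n\times\nu_n)(\mathcal D\setminus\mathcal D_{\ge\epsilon})\to 0$ as $\epsilon\to 0$.

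The gap is in the justification you offer for that estimate. The claim that ``the $\nu_n$--measure of geodesics crossing a fixed compact transversal at angle less than $\epsilon$ is $O(\epsilon)$ times the total transverse flux'' is false for general currents: currents can be atomic (closed curves are), and a single closed geodesic crossing your transversal once at angle $\epsilon_0$ contributes its full mass to the angle-$<\epsilon$ set as soon as $\epsilon>\epsilon_0$, not an $O(\epsilon)$ fraction of it. A pointwise-in-transversal bound of that kind holds only for currents with some absolute continuity in the angle coordinate (e.g.\ the Liouville current), so the step would fail exactly for the sequences of weighted closed curves you later want to use for density. The correct mechanism --- and the real content of Bonahon's continuity proof --- is different: two geodesics crossing inside $\overline F$ at angle $<\epsilon$ must fellow-travel within distance $1$ of each other for time at least $c\log(1/\epsilon)$, and a packing / local-finiteness argument (using only that $\mu_n,\nu_n$ assign uniformly bounded mass to the compact set of geodesics meeting a fixed neighborhood of $\overline F$, which follows from weak* convergence) then bounds the mass of such pairs by $C/\log(1/\epsilon)$, uniformly in $n$. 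So the tail bound is true, but for a reason your sketch does not supply. A smaller quibble: your claim that only atom--atom crossings can charge the hypersurfaces $\{\text{crossing point}\in\partial F\}$ and $\{\text{crossing angle}=\epsilon\}$ is also not right, since a non-atomic current can charge a one-parameter family of geodesics; the correct argument is a Fubini one --- for fixed $g$ the function $\epsilon\mapsto\nu(\{g':\angle(g,g')=\epsilon\})$ vanishes off a countable set, so almost every choice of $\epsilon$ (and of the translate of $F$) yields a $(\mu\times\nu)$--null boundary, which is all the portmanteau step needs.
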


Another important class of geodesic currents is the  \emph{Liouville currents} $L_m$ associated to certain types of metrics $m$.  
Liouville currents have the geometricity property that the intersection form recovers lengths: for every essential closed curve $\gamma$ on $S$,
\begin{equation}\label{intersection}
\iota(\gamma, L_m)=\ell_{m}(\gamma),
\end{equation}
where $\ell_m(\gamma)$ is the length of the $m$--geodesic representative of the homotopy class of $\gamma$.  Such currents exist for a wide range of metrics, generalizing the classical Liouville measure on geodesics in the hyperbolic plane given in terms of  cross-ratios; see \cite{Bonahon,Otal,croke,CFF,HerPaul,DLR,BL,Constantine}.  The construction of the Liouville current for a flat metric and an investigation of its various properties was the focus of \cite{DLR} and \cite{BL}.  We describe the key properties in \S\ref{S:flat liouville}, setting the stage for the more detailed analysis we will carry out in \S\ref{sec_support}. 

%%%%%%%%%%%%%%%%%%%%%%%%
\subsection{Flat metrics, geodesics, and holonomy} \label{S:flat basics}\label{S:basic}
%%%%%%%%%%%%%%%%%%%%%%%%

A {\em flat metric} on $S$ will mean a nonpositively curved Euclidean cone metric $\varphi$ on $S$.  This is a singular Riemannian metric, locally isometric to $\mathbb R^2$ away from finitely many cone singularities $\Sigma=\Sigma(\varphi) \subset S$, each with cone angle greater than $2 \pi$. 
Since $\varphi$ is nonpositively curved, any two points of $\tilde S$ are connected by a unique $\tilde \varphi$--geodesic.  In the complement of the cone points, geodesics are Euclidean geodesics (straight lines, rays, or segments), and when a geodesic meets a cone point, it makes angle at least $\pi$ on both sides; see Figure~\ref{F:local geometry of geodesics}.  

\begin{figure}[h]
\begin{center}
\begin{tikzpicture}[scale = .6]
\draw[dotted] (-2,-2) -- (-1.75,-1.75);
\draw (-1.75,-1.75) -- (0,0) -- (1,2);
\draw[dotted] (1,2) -- (1.12,2.24); 
\draw [domain=-135:60] plot ({.3*cos(\x)}, {.3*sin(\x)});
\draw [domain=60:225,ultra thick] plot ({.4*cos(\x)}, {.4*sin(\x)});

\draw[fill=black] (0,0) circle (.05cm);
\node at (.8,-.3) {\small $\geq \pi$};
\node at (-.5,.7) {\small $\geq \pi$};
\end{tikzpicture}
\caption{Local picture of a geodesic through a cone point.}
\label{F:local geometry of geodesics}
\end{center}
\end{figure}
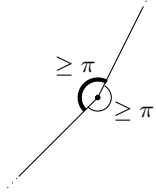

Geodesics between a pair of cone points which meet no other cone points are called {\em saddle connections}.  A geodesic segment, ray, or line containing no cone points is called {\em nonsingular}, and is called {\em singular} otherwise.   We 
write $\G^*(\tilde \varphi)$ for the closure in $\G(\tilde \varphi)$ of the set of nonsingular geodesics, and we will call these the 
{\em basic geodesics} in the metric $\tilde \varphi$.
The following is proved in \cite[Section 2.4]{BL}.

\begin{proposition}[Structure of basic geodesics]\label{P:support geodesic types}\label{P:basic}
The geodesics in $\G^*(\tilde \varphi)$ are precisely the following $\tilde \varphi$--geodesics:
(i) the nonsingular ones; (ii) the ones that meet a single cone point and make cone angle exactly $\pi$ on one side; and 
(iii) those that meet two or more cone points in such a way that they always make cone angle $\pi$ on one side and that side switches
from right to left or from left to right at most once along their entire length.  
Furthermore, there are only countably many of type (iii), and we will denote that set by $\G^2(\tilde \varphi)$.
\end{proposition}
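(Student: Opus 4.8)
The plan is to prove the two inclusions separately, the common engine being a local analysis of geodesics in a small metric ball about a cone point. Let $v$ be a cone point, with cone angle $\theta_v>2\pi$, and let $B$ be a metric ball about $v$ small enough to contain no other cone point, so that $B$ is isometric to a Euclidean cone of angle $\theta_v$. The key elementary fact I would isolate first is: a geodesic of $\tilde S$ that enters $B$, leaves $B$, and misses $v$ must meet $\partial B$ in two points whose angular separation around the cone is strictly less than $\pi$; conversely, if two points of $\partial B$ have angular separation at least $\pi$ (going either way around the cone), then the geodesic of $\tilde S$ joining them runs through $v$. One checks this by developing the relevant sector into the Euclidean plane --- a sector of angle less than $\pi$ is convex, one of angle at least $\pi$ is not, and $\theta_v>2\pi$ forbids both complementary sectors from having angle less than $\pi$.

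First I would show that every $\tilde\gamma\in\G^*(\tilde\varphi)$ is of type (i), (ii), or (iii). Write $\tilde\gamma_n\to\tilde\gamma$ with each $\tilde\gamma_n$ nonsingular, and let $v$ be a cone point met by $\tilde\gamma$, with side angles $\alpha$ and $\theta_v-\alpha$, both at least $\pi$ since $\tilde\gamma$ is a geodesic. Pick $B$ as above; since closed balls are convex in the CAT(0) space $\tilde S$, for large $n$ the arc of $\tilde\gamma_n$ between its entry and exit points of $B$ is the geodesic of $\tilde S$ between those points, and by the convergence $\tilde\gamma_n\to\tilde\gamma$ those two points converge to the entry and exit points of $\tilde\gamma$, whose angular separation is $\min(\alpha,\theta_v-\alpha)$. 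By the key fact, nonsingularity of $\tilde\gamma_n$ forces $\min(\alpha,\theta_v-\alpha)\le\pi$; with $\alpha,\theta_v-\alpha\ge\pi$ this gives $\alpha=\pi$ or $\theta_v-\alpha=\pi$, i.e.\ $\tilde\gamma$ makes cone angle exactly $\pi$ on one side at $v$. The same computation shows that for large $n$, $\tilde\gamma_n$ passes $v$ on the $\pi$-side of $\tilde\gamma$. Since $\tilde\gamma$ is a properly embedded line separating $\tilde S$ into two pieces, and two distinct geodesics of a CAT(0) space meet in a connected set, $\tilde\gamma_n$ lies in one piece along an initial sub-ray and in the other along a terminal sub-ray; hence, running along $\tilde\gamma$, the side on which $\tilde\gamma_n$ passes the cone points --- and therefore the $\pi$-side of $\tilde\gamma$ --- can switch at most once. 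This is exactly the assertion that $\tilde\gamma$ is of type (i), (ii), or (iii).

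For the reverse inclusion I would show that each type-(i), -(ii), -(iii) geodesic is a limit of nonsingular ones. Type (i) is immediate. For type (ii), with $\tilde\gamma$ meeting only $v$ and making cone angle $\pi$ on, say, its left side: fix a regular point $w$ on $\tilde\gamma$ and tilt the direction of $\tilde\gamma$ at $w$ slightly toward the left. For all but countably many such tilts the resulting straight line of $\tilde S$ avoids every cone point (a straight line meets a cone point only if aimed directly at one), hence is a nonsingular geodesic; developing the left sector at $v$, which has angle $\pi$ and so is a half-plane, shows that as the tilt shrinks to $0$ these nonsingular geodesics converge to $\tilde\gamma$. Type (iii) is handled in the same spirit, by pushing $\tilde\gamma$ off onto its $\pi$-side, except that one now approximates by straight lines that must be steered so as to pass on the $\pi$-side of each of the (possibly infinitely many) cone points met by $\tilde\gamma$; among the straight lines that do so and remain close to $\tilde\gamma$ on a fixed compact piece, the nonsingular ones are still dense, so one extracts a sequence converging to $\tilde\gamma$ by a diagonal argument, and the single-switch condition is precisely what makes a consistent choice of steering possible. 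Finally, $\G^2(\tilde\varphi)$ is countable because any type-(iii) geodesic contains a saddle connection (the segment between two consecutive cone points it meets), the rule ``turn so as to make cone angle $\pi$ on the chosen side'' extends a saddle connection to a unique geodesic, and the switching pattern is a bounded amount of combinatorial data; since $\pi_1S$ is countable and $\Sigma$ is finite there are only countably many saddle connections, hence only countably many type-(iii) geodesics.

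The part I expect to be genuinely delicate is the reverse inclusion for type (iii): the local picture near a single cone point is clean, but actually producing nonsingular geodesics converging to a prescribed type-(iii) geodesic --- steering a straight line past all the remaining cone points of $\tilde S$ while respecting the $\pi$-side structure, and organizing the approximation so that the single-switch constraint is honored --- is where the argument requires the most care.
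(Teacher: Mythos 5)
The paper does not actually prove this proposition --- it is imported wholesale from \cite[Section 2.4]{BL} --- so there is no in-text argument to compare yours against; I can only assess your proof on its own terms, and it is essentially correct and follows the route one would expect the cited source to take. Your three main tools are the right ones: the development of a small cone sector to decide exactly when a geodesic crossing a ball about a cone point must pass through the apex, convexity of balls in a CAT(0) space to localize the limiting argument, and connectedness of the intersection of two distinct CAT(0) geodesics to cap the number of side-switches at one (the same fact, run in reverse, is what shows a twice-switching geodesic cannot be approximated). The forward inclusion is complete as written. For the reverse inclusion in case (iii), your sketch is the correct one, and the way to make it precise is per compact piece: for each $T$, develop the $\pi$--side collar of $\tilde\gamma([-T,T])$ (splitting the collar at the unique switch point, if there is one), place in it a parallel segment, or a segment tilted by $\epsilon \le \eta/T$ through a point of the gap between the two cone points where the switch occurs, extend to a complete geodesic, and use the countability of singular directions from a point to choose the extension nonsingular; a diagonal argument over $T\to\infty$, $\eta\to 0$ then yields the convergent sequence, so the global ``steering'' you worry about is not actually needed beyond each compact window. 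One small imprecision: in the countability argument, the location of the single switch is not a \emph{bounded} amount of combinatorial data --- it ranges over the countably many cone points met by the geodesic, in each direction --- but since a type (iii) geodesic is determined by a saddle connection it contains together with the initial side choice and the (countably many possible) switch locations, and there are countably many saddle connections, the set $\G^2(\tilde\varphi)$ is still a countable union of countable sets, and the conclusion stands.
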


Suppose $\tilde \gamma_0,\tilde \gamma_1 \in \G^*(\tilde \varphi)$ are asymptotic in one direction, meaning that the 
Hausdorff distance in one direction is finite.  Then either we can pass to subrays
$\tilde \gamma_0^+ = \tilde \gamma_1^+$ 
that coincide, 
or else there is an isometric embedding $[0,\infty) \times [0,W] \to \tilde S$ for some $W >0$ such that $[0,\infty) \times \{0\}$ maps to $\tilde \gamma_0^+$ and $[0,\infty) \times \{W \}$ maps to $\tilde \gamma_1^+$;  see \cite[Theorem II.2.13]{BridHaef}.  In the former case, there is a maximal such common subray, necessarily emanating from a cone point $\tilde \zeta \in \Sigma(\tilde \varphi)$, and we say that $\tilde \gamma_0$ and $\tilde \gamma_1$ are {\em cone-point asymptotic}.  In the latter case, the embedding of $[0,\infty) \times [0,W]$ is called a {\em flat half-strip}.

A {\em flat strip} is an isometric embedding $\mathbb R \times [0,W] \to \tilde S$, for some $W> 0$, or the composition of such an isometric embedding with the projection down to $S$.  Flat strips in $S$ naturally arise from Euclidean {\em cylinders} in $S$, which are locally isometric immersions $S^1_r \times [0,W] \to S$ where $W >0$ and $S^1_r$ is a circle of length $r >0$.  The next result of Hassell--Hillairet--Marzuola \cite{HHM} (generalizing work of Galperin-Kr\"uger-Troubetzkoy \cite{GKT} for billiards) says that all flat strips arise in this way.  
%%%%%%%%%%%NEW TEXT FOLLOWS%%%%%%%%%
The proofs in \cite{GKT,HHM} are easily modified to handle flat half-strips as well.

\begin{theorem}[Flat Strip Theorem, \cite{GKT,HHM}]\label{T:flat strips} With respect to the metric $\varphi\in \Flat(S)$, any flat strip or flat half-strip in $S$ is contained in a cylinder.
\end{theorem}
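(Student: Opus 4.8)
The plan is to take a maximal flat strip (or half-strip) $F \colon \mathbb{R} \times [0,W] \to S$ (resp.\ $[0,\infty) \times [0,W] \to S$) and show that its two boundary geodesics close up, after a bounded amount of travel, into parallel closed geodesics of the same length, so that $F$ factors through a cylinder $S^1_r \times [0,W] \to S$. Lift to the universal cover: a flat strip in $S$ pulls back (choosing a lift) to an isometric embedding $\tilde F \colon \mathbb{R} \times [0,W] \to \tilde S$ whose image is a convex subset of the $\mathrm{CAT}(0)$ space $(\tilde S, \tilde\varphi)$, with the two boundary lines $\tilde\gamma_0 = \tilde F(\mathbb{R}\times\{0\})$ and $\tilde\gamma_1 = \tilde F(\mathbb{R}\times\{W\})$ being bi-infinite $\tilde\varphi$--geodesics sharing both endpoints at infinity, i.e.\ $\partial_{\tilde\varphi}(\tilde\gamma_0) = \partial_{\tilde\varphi}(\tilde\gamma_1) = \{\xi^-,\xi^+\} \in \G(\tilde S)$. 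The first step is to locate a deck transformation $g \in \pi_1 S$ fixing the pair $\{\xi^-,\xi^+\}$ and acting as a nontrivial translation along the strip: since $S$ is closed, the image $F(\mathbb{R}\times\{0\})$ in $S$ is a complete locally isometric image of a line of infinite diameter, so it accumulates; combined with the local finiteness of $\Sigma(\varphi)$ and a standard compactness/pigeonhole argument one produces a return, hence an element $g$ whose axis contains (a subray of) $\tilde\gamma_0$. The Flat Strip Theorem of $\mathrm{CAT}(0)$ geometry (\cite[Theorem II.2.13]{BridHaef}) then forces $g$ to preserve the whole maximal flat strip spanned by $\tilde\gamma_0$ and $\tilde\gamma_1$ and to translate it.

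The second step is to quotient. Let $Z \subset \tilde S$ be the maximal flat strip containing $\tilde F(\mathbb{R}\times[0,W])$ — maximality in the $\mathrm{CAT}(0)$ sense, so $Z \cong \mathbb{R}\times[0,W_{\max}]$ with $W_{\max}\geq W$ — and let $\Gamma_Z \leq \pi_1 S$ be the stabilizer of $Z$. Since $Z$ is a flat strip, $\Gamma_Z$ is a subgroup of the isometry group $\mathbb{R}\rtimes(\mathbb{Z}/2)$ of the strip's long direction; being a subgroup of $\pi_1 S$ acting properly discontinuously and containing the translation $g$, it is infinite cyclic or infinite dihedral, and because $S$ is orientable and the elements act on $Z$ preserving the strip, the relevant subgroup generated by $g$ is $\mathbb{Z} = \langle g_0 \rangle$ for a primitive translation $g_0$ of translation length $r > 0$. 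Then $Z/\langle g_0\rangle$ is an embedded Euclidean cylinder $C \cong S^1_r \times [0,W_{\max}]$, and the composite $S^1_r\times[0,W_{\max}] \cong Z/\langle g_0\rangle \hookrightarrow \tilde S / \langle g_0 \rangle \to S$ is a locally isometric immersion — precisely a cylinder in the sense defined before the theorem — whose image contains $F(\mathbb{R}\times[0,W])$. Since $W \leq W_{\max}$, the original flat strip is contained in this cylinder, as required.

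The third step handles the half-strip case. Given a flat half-strip $[0,\infty)\times[0,W]\to S$ with boundary rays $\tilde\gamma_0^+,\tilde\gamma_1^+$, I would first argue that it extends to a full flat strip: the two boundary rays are cone-point asymptotic or half-strip asymptotic in the forward direction by construction, but in the backward direction one uses that a half-strip of infinite length in a closed surface, again by the accumulation/pigeonhole argument, must "wrap around," producing a deck transformation that carries a forward subray of $\tilde\gamma_0^+$ to a later forward subray; iterating $g$ in the negative direction extends $\tilde\gamma_0^+$ and $\tilde\gamma_1^+$ to bi-infinite parallel geodesics bounding a genuine flat strip, and one is reduced to the previous case. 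Alternatively, and more in the spirit of the remark in the excerpt that the \cite{GKT,HHM} proofs are "easily modified," one can run the closed-surface recurrence argument directly on the half-strip and observe that the recurrence element already has its axis meeting the half-strip, whence the half-strip lies in the flat strip it spans.

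The main obstacle is the first step: producing the deck transformation $g$ with axis along the strip. One must rule out the possibility that the boundary geodesic $F(\mathbb{R}\times\{0\})$ in $S$, though of infinite length, wanders without ever "returning" in a way that yields a periodic element — this is where closedness of $S$ (hence compactness) and the fact that $\tilde\gamma_0$ is a true geodesic (not merely a quasigeodesic) in a $\mathrm{CAT}(0)$ space are essential. A clean way to organize this: the unit tangent bundle restricted to a compact core is compact, the geodesic flow along $\tilde\gamma_0$ projects to a forward orbit in $S$, and recurrence of this orbit combined with the uniqueness of geodesics in $\mathrm{CAT}(0)$ (so that two nearby long geodesic segments that are close at both ends bound a thin flat strip and hence are translates of one another) yields the desired periodicity; one also needs that the strip's transverse width $W$ forces the returning geodesic segment to be a genuine translate rather than merely asymptotic, which is exactly the content of the $\mathrm{CAT}(0)$ Flat Strip Theorem. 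Once $g$ is in hand, steps two and three are essentially formal.
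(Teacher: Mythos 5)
Your Steps 2 and 3 are essentially formal once Step 1 is in place, but Step 1 --- producing a deck transformation $g$ that stabilizes the boundary geodesic $\tilde\gamma_0$ (equivalently, making $F(\mathbb{R}\times\{0\})$ close up to a periodic geodesic) --- contains a genuine gap, and it is exactly the part of the theorem that carries all the content. Recurrence of the geodesic in the compact surface $S$ does produce, for suitable times $t_1<t_2$, a deck transformation $h$ with $h(\tilde\gamma_0'(t_1))$ close to $\tilde\gamma_0'(t_2)$; but this only says that $h\tilde\gamma_0$ is a geodesic passing close to $\tilde\gamma_0$ in nearly the same direction. It does not say $h\tilde\gamma_0=\tilde\gamma_0$. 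Your key assertion --- that two long geodesic segments which are close at both ends ``bound a thin flat strip and hence are translates of one another'' --- is false: on a flat torus an irrational-direction geodesic recurs arbitrarily closely, the returning segment is parallel to but disjoint from the original, and no power of the associated deck transformation ever stabilizes the line. The CAT(0) Flat Strip Theorem of \cite{BridHaef} cannot close this gap: it converts ``two bi-infinite geodesics at bounded Hausdorff distance'' into ``they cobound a flat strip,'' which is the situation you already start from, not the periodicity you need.

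Concretely, two bad types of return must be excluded, and neither is addressed by your sketch. (a) \emph{Parallel but transversally shifted:} the strip then overlaps itself along a nontrivial shift; iterating, it coils up into a region into which arbitrarily large flat Euclidean disks immerse away from $\Sigma(\varphi)$, and the contradiction comes not from CAT(0) convexity but from Gauss--Bonnet (genus $\geq 2$ forces $\Sigma(\varphi)\neq\emptyset$) together with compactness of $S$, which make the cone points coarsely dense in $\tilde S$. (b) \emph{Return at a slightly different angle:} the returning copy of the width-$W$ strip then crosses the limiting strip transversally at a small angle, and to derive a contradiction one needs cone points to occur with \emph{bounded gaps} within distance less than $W/2$ of the boundary of the limiting maximal strip, so that the tilted copy must swallow one of them; arranging bounded gaps requires \emph{uniform} recurrence (a minimal subsystem for the first-return map on a Poincar\'e section over the edges of a $\varphi$--triangulation, e.g.\ via Furstenberg), not mere recurrence. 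This trichotomy --- exact return gives periodicity, parallel shifted return gives arbitrarily large flat disks, tilted return forces a cone point into the strip --- is the actual proof (it is how \cite{GKT,HHM} proceed), and your proposal replaces it with an appeal to uniqueness of CAT(0) geodesics that does not do the job. Until case (a) and case (b) are ruled out, no translation $g$ along the strip has been produced, and Steps 2 and 3 have nothing to quotient by.
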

%%%%%%%%%%%NEW TEXT ENDS%%%%%%%%%%%%

A {\em $\tilde \varphi$--Euclidean triangle} is a $\tilde \varphi$--geodesic triangle in $\tilde S$ with vertices at the cone points which is isometric to a geodesic triangle in $\mathbb R^2$; equivalently, a $\widetilde \varphi$--Euclidean triangle is a triangle whose sides are $\widetilde \varphi$--saddle connections and which bounds a $2$--simplex with no cone points in its interior.  (Note: we may refer to either the $2$--simplex or its boundary as  
a triangle.)  A $\varphi$--Euclidean triangle $T$ is the image in $S$ of a $\widetilde \varphi$--Euclidean triangle $\tilde T$ in $\tilde S$ for which the restriction of $p \colon \widetilde S \to S$ to $\tilde T$ is injective on the interior.  In this case, there is a map from $T$ to a Euclidean triangle in $\mathbb R^2$ (though it may not be defined at the vertices) which is an isometry for the induced path metric on $T$.

A  $\varphi$--{\em triangulation} of $S$ is a triangulation (more precisely, a $\Delta$--complex structure in the sense of \cite{Hatcher}) such that every triangle is a $\varphi$--Euclidean triangle.  A $\varphi$--triangulation lifts to a $\tilde \varphi$--triangulation of $\tilde S$.  It was shown in \cite{MasurSmillie} that $\varphi$--triangulations exist for quadratic differential metrics, and we note that  the same proof is valid for arbitrary flat metrics.

For a flat metric $\varphi$, any homotopy class of a curve $\gamma$ on $S$  has a $\varphi$--geodesic representative; such a representative can be obtained as a uniform limit of constant-speed parametrizations of homotopic curves whose lengths limit to the infimum over all representatives.  The geodesic representative is either unique (up to parametrization) or else it is part of a  Euclidean {\em cylinder}, $S^1_r \times [0,W] \to S$, as above.
In this case the set of geodesic representatives of $\gamma$ is precisely the set of {\em core curves} (i.e., the images of $S^1_r \times \{t\}$ for $t \in [0,W]$).  If a curve $\gamma$ is homotopic to a core curve of  a cylinder, then we say that $\gamma$ is a cylinder curve.  The set of all homotopy classes of cylinder curves for $\varphi$ will be denoted $\cyl(\varphi)$.  In general, {\em closed curves} will  
refer to closed essential curves on $S$ up to homotopy.

Suppose $\varphi$ is a flat metric on $S$.  Since $\varphi$ has zero curvature on $S \setminus \Sigma(\varphi)$, we have a well-defined {\em holonomy homomorphism} $P_{\varphi} \colon \pi_1(S \setminus \Sigma(\varphi),\zeta) \to SO(2)$ given by parallel transport around loops based at a point $\zeta \in S \setminus \Sigma(\varphi)$.  
We will often refer to the image of $P_{\varphi}$ as {\em the holonomy of $\varphi$}.

%%%%%%%%%%%%%%%%%%%%%%%%
%%%%%%%%%%%%%%%%%%%%%%%%
\subsection{The space $\Flat(S)$ and affine equivalence} \label{S:Flat(S)}
%%%%%%%%%%%%%%%%%%%%%%%%
%%%%%%%%%%%%%%%%%%%%%%%%

We say that two flat metrics $\varphi_1,\varphi_2$ are equivalent if there exists an isometry $f \colon (S,\varphi_1) \to (S,\varphi_2)$ isotopic to the identity on $S$.  We then define $\Flat(S)$ to be the set of equivalence classes of unit-area flat metrics.  
We will use the notation $\varphi$ for either a particular flat metric or its equivalence class.

We will also need the following coarser equivalence relation on flat metrics.  
 We say that two (unit area) flat metrics $\varphi_1,\varphi_2$ are {\em affine--equivalent} if there is a map $f \colon (S,\varphi_1) \to (S,\varphi_2)$, isotopic to the identity, which is real-affine in isometric local coordinates: for all $\zeta \in S \setminus \Sigma(\varphi_1)$ and orientation preserving, isometric local coordinates $\xi_1 \colon U_1 \to \mathbb R^2$ and $\xi_2 \colon U_2 \to \mathbb R^2$ about $\zeta$ and $f(\zeta)$, respectively, with $f(U_1) = U_2$, we have $\xi_2 \circ f|_{U_1} = A \circ \xi_1 + b$, for some $A \in \SL(2,\mathbb R)$ and $b \in \mathbb R^2$.  If $f$ is not an isometry, then at each nonsingular point there is a direction of maximal stretch.  Using the local coordinates we see that the direction of maximal stretch defines a parallel line field on $S \setminus \Sigma(\varphi_1)$.  It follows from the existence of such a parallel line field that the holonomy of $\varphi_1$ is contained in $\{\pm I\} < SO(2)$.  Consequently, $\varphi_1$ and $\varphi_2$ are defined by quadratic differentials differing by an element of $\SL(2,\mathbb R)$ from the usual action of $\SL(2,\mathbb R)$ on the space of quadratic differentials; see \cite{MasTab}, for example.

\begin{proposition} \label{P:affine deformation iff quadratic differential} If $\varphi_1,\varphi_2$ are two affine-equivalent flat metrics which are not equivalent, then $\varphi_1,\varphi_2$ are defined by quadratic differentials in the same $SL(2,\mathbb R)$--orbit. \hfill \qed
\end{proposition}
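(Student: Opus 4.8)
The plan is to unwind the two hypotheses and then carry out, in detail, the argument sketched in the paragraph immediately preceding the statement. Affine--equivalence hands us a homeomorphism $f\colon (S,\varphi_1)\to (S,\varphi_2)$, isotopic to the identity, which in orientation-preserving isometric local coordinates reads $\xi_2\circ f = A\xi_1 + b$ with $A\in\SL(2,\R)$ and $b\in\R^2$. The extra input supplied by the ``not equivalent'' hypothesis is exactly that $f$ is not an isometry: an isometry isotopic to the identity is precisely what it would take to make $\varphi_1$ and $\varphi_2$ equivalent.

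The first substantive step is to observe that the singular values of the linear part of $f$ are globally constant. Two orientation-preserving isometric charts about a nonsingular point differ by postcomposition with an element of $SO(2)$ (and a translation), so the linear parts $A,A'$ of $f$ computed in two overlapping charts satisfy $A'=R_2 A R_1^{-1}$ with $R_1,R_2\in SO(2)$ and hence have equal singular values; as $S\setminus\Sigma(\varphi_1)$ is connected, the singular values $\lambda\ge 1\ge\lambda^{-1}$ of $f$ are independent of the point. If $\lambda=1$ then $A$ is orthogonal at each point, and being orientation-preserving of determinant $1$ it lies in $SO(2)$, so $f$ is a local isometry; being a homeomorphism, $f$ is then a global isometry, which we have just excluded. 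Hence $\lambda>1$, so the top singular value is simple, the line through the origin spanned by the top singular vector of $A$ is well-defined, and it transforms correctly under $A'=R_2AR_1^{-1}$ (the transition is by $R_1$ on both the chart and the line). These lines therefore glue to a line field $\mathcal{L}_1$ on $S\setminus\Sigma(\varphi_1)$, and since in each isometric chart $\mathcal{L}_1$ is the constant line field spanned by a fixed vector while parallel transport for $\varphi_1$ is trivial in such charts, $\mathcal{L}_1$ is parallel.

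A parallel line field forces the holonomy $P_{\varphi_1}\colon\pi_1(S\setminus\Sigma(\varphi_1),\zeta)\to SO(2)$ to land in the stabilizer of a line in $\R^2$, which is $\{\pm I\}$; so $\varphi_1$ has holonomy of order at most $2$ and is therefore the metric of a quadratic differential $q_1$ on the Riemann surface underlying it (see \cite{MasTab}). The affine homeomorphism $f^{-1}$ has linear part $A^{-1}$ with the same singular values $\lambda^{\pm1}$, $\lambda>1$, so the identical argument produces a parallel line field for $\varphi_2$ and a quadratic differential $q_2$ with metric $\varphi_2$. Finally, taking $\xi_1$ in $\xi_2\circ f = A\xi_1+b$ to be a flat chart of $q_1$, the coordinate $A\xi_1$ is a flat chart of the quadratic differential $A\cdot q_1$, and in the charts $A\xi_1$ and $\xi_2$ the map $f$ reads $x\mapsto x+b$; thus $f$ is an isometry $(S,A\cdot q_1)\to(S,\varphi_2)$ isotopic to the identity, so $A\cdot q_1$ and $q_2$ define the same metric in $\Flat(S)$. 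An orientation-preserving isometry isotopic to the identity between flat metrics of quadratic differentials is conformal, hence relates the differentials by an element of $SO(2)\subset\SL(2,\R)$; therefore $q_2$ lies in the $\SL(2,\R)$--orbit of $q_1$ (the element $A$ being pinned down only up to $\pm I$, which is harmless since $-I$ acts trivially on flat structures).

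I expect the only step demanding genuine care to be the middle one: confirming that ``direction of maximal stretch'' yields an honest, nowhere-vanishing, parallel line field, which reduces to the $SO(2)$ transition computation together with the clean exclusion of the case $\lambda=1$ using that $f$ is a homeomorphism. One should also note that one neither needs nor should attempt to extend $\mathcal{L}_1$ over the cone points, since $P_{\varphi_1}$ is defined on $\pi_1(S\setminus\Sigma(\varphi_1))$ in the first place. Everything else is routine bookkeeping with isometric charts.
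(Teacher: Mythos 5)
Your proof is correct and follows exactly the argument the paper sketches in the paragraph immediately preceding the proposition (which is stated with \verb|\qed| and no separate proof): the direction of maximal stretch gives a parallel line field on $S\setminus\Sigma(\varphi_1)$, forcing the holonomy into $\{\pm I\}$, hence both metrics come from quadratic differentials related by the linear part $A\in\SL(2,\mathbb R)$. You have simply filled in the routine chart-transition and singular-value details that the paper leaves implicit.
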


In \cite{DLR}, $\Flat(S)$ denoted only the space of flat metrics coming from quadratic differentials.  In the current paper (as well as \cite{BL}), the flat metrics coming from quadratic differentials, which we denote $\Flat_2(S) \subset \Flat(S)$ is a (fairly small) subspace (characterized, for example, by Proposition~\ref{P:affine deformation iff quadratic differential} as those admitting affine deformations).  The following theorem, proved in \cite[Lemma 19]{DLR}, gives a useful way of deciding when quadratic differential metrics are affine-equivalent.

\begin{theorem} \label{DLR}
If $\varphi_1$ and $\varphi_2$ are quadratic differential metrics on any surface of finite type, then they are affine-equivalent
if and only if their cylinder curves agree.
\end{theorem}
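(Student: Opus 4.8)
The forward implication is elementary: an affine map $f\colon(S,\varphi_1)\to(S,\varphi_2)$ isotopic to the identity is, in isometric local coordinates, an invertible linear map followed by a translation, so it carries any family of parallel Euclidean segments to a family of parallel Euclidean segments. Applying this to the foliation of a Euclidean cylinder $C$ in $\varphi_1$ by its closed core geodesics shows $f(C)$ is a Euclidean cylinder in $\varphi_2$ and that $f$ takes core curves to core curves; hence $\cyl(\varphi_1)\subseteq\cyl(\varphi_2)$, and the reverse inclusion follows by applying the same argument to the affine map $f^{-1}$.

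For the converse, write $q_i$ for the quadratic differential defining the unit-area metric $\varphi_i$, and suppose the marked sets $\cyl(\varphi_1)$ and $\cyl(\varphi_2)$ coincide. The plan is to recover from this marked set of homotopy classes enough of the geometric organization of each $\varphi_i$ to determine it up to the action of $\mathrm{GL}^+(2,\mathbb R)$ on quadratic differentials, and then to compare. I would proceed in three steps. \emph{Step 1:} from $\cyl(\varphi_i)$ recover the partition of cylinder curves into \emph{parallel classes} --- those whose $\varphi_i$-geodesic representatives run in a common direction --- and recognize which parallel classes are the core multicurves of a complete cylinder decomposition of $S$. By a theorem of Masur the periodic directions are dense, so there are infinitely many such decompositions, each giving a marked multicurve $\mu\subseteq\cyl(\varphi_i)$. \emph{Step 2:} for each decomposition multicurve $\mu$, recover the full combinatorial gluing data of the corresponding cylinder decomposition --- how the boundary circles of the cylinders are subdivided into saddle connections and identified --- by examining how the decomposition multicurves of nearby periodic directions are carried: as a direction $\theta$ approaches a periodic direction $\theta_0$, the $\theta$-cylinders become long and thin and wind along the $\theta_0$-saddle connections, exposing the gluing. \emph{Step 3:} show that the marked family of cylinder decompositions, together with this combinatorial gluing data, determines $q_i$ up to $\mathrm{GL}^+(2,\mathbb R)$. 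Granting these steps, the hypothesis $\cyl(\varphi_1)=\cyl(\varphi_2)$ and Steps 1--2 identify the two families of combinatorial cylinder decompositions, so Step 3 forces $q_1$ and $q_2$ into a common $\mathrm{GL}^+(2,\mathbb R)$-orbit; since both metrics have unit area the orbit element lies in $\SL(2,\mathbb R)$, which is exactly affine-equivalence, cf.\ Proposition~\ref{P:affine deformation iff quadratic differential}.

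For Step 3 I would argue by normalization and propagation. Replacing $q_1$ by $A\cdot q_1$ for a suitable $A\in\mathrm{GL}^+(2,\mathbb R)$, we may assume two fixed crossing cylinder curves $c,c'$ have the same holonomy vectors for $q_1$ and $q_2$ --- since $c$ and $c'$ cross, their holonomy vectors are linearly independent, and matching both of them pins down the residual $\mathrm{GL}^+(2,\mathbb R)$-ambiguity entirely. Using the cylinders with cores $c$ and $c'$ and the saddle connections in their boundaries, cut $S$ into Euclidean polygons and refine to a $\varphi_i$-triangulation; the combinatorial data from Steps 1--2 determines the \emph{directions} of all edges, and the remaining unknowns --- the circumferences and heights of all cylinders, equivalently the horizontal and vertical periods --- are tied together by the linear relations expressing that each cylinder closes up and that the Euclidean triangles fit together consistently around every vertex. (Concretely, for periodic directions $\theta,\theta'$ one has an intersection identity $\iota(\mathcal F_\theta,\mathcal F_{\theta'}) = |\sin(\theta-\theta')|$ for the measured foliations by geodesics in those directions; expanding each $\mathcal F_\theta$ as the unknown-weighted sum of its known decomposition multicurve and using the known geometric intersection numbers of curves produces enough linear relations to solve for all periods once $c,c'$ are fixed.) Since the union of all cylinders is dense in $S$, this determines the developing map, so $q_2 = A\cdot q_1$, as desired.

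The main obstacle is Step 1: extracting this geometric organization --- parallelism, and the recognition of cylinder-decomposition multicurves --- from the bare set of homotopy classes. Naive combinatorial criteria fail, since parallel cylinder curves supported in disjoint subsurfaces can be separated from one another by a third cylinder curve, and non-parallel disjoint cylinder curves can likewise be separated, so neither parallelism nor its negation is visible from intersection patterns alone. One must argue globally: characterize a cylinder-decomposition multicurve as a pairwise-disjoint sub-multicurve $\mu\subseteq\cyl(\varphi)$ that is maximal subject to a suitable combinatorial condition forcing every component of $S\setminus\mu$ to be an annulus --- thereby ruling out ``fake'' decompositions whose complementary pieces are pairs of pants or more complicated subsurfaces --- and then derive the finer parallelism relation on all of $\cyl(\varphi)$ (including parallel classes that fail to fill) by intersecting with nearby decompositions. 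Making this precise is the technical heart of the argument; once it is in place, Steps 2 and 3 are comparatively routine reconstruction-from-combinatorics arguments of a familiar type.
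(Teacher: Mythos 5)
You should first note that the paper does not prove this statement at all: it is quoted verbatim from \cite{DLR} (Lemma~19 there), so there is no in-paper argument to compare against, and your attempt has to stand on its own. The forward direction of your write-up is fine. The converse, however, is a program rather than a proof, and its architecture rests on a false premise. You invoke Masur's density theorem to produce ``infinitely many complete cylinder decompositions'' in a dense set of directions. Masur's theorem gives density of directions containing \emph{at least one} cylinder; it does not give completely periodic directions. For a quadratic differential that is not Veech (and not completely periodic in the sense of Calta--McMullen), a typical periodic direction has a single cylinder whose complement is a minimal component, and there is no reason for decomposition multicurves $\mu$ with all complementary components annular to exist in a dense set of directions, or even to exist at all. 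Since Steps 1--3 all consume this family of decompositions --- Step 2 reads off gluing data from nearby decompositions, and Step 3 expands each $\mathcal F_\theta$ as a weighted sum of the curves of a decomposition multicurve --- the whole reconstruction scheme collapses outside the Veech-like setting, which is exactly the generic case.

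Even setting that aside, the two places where the real content would have to live are left open. In Step 1 you correctly observe that naive intersection-pattern criteria cannot detect parallelism, but the ``suitable combinatorial condition'' that is supposed to certify a decomposition multicurve and rule out fake decompositions is never specified; you say yourself that making this precise is the technical heart, which is an admission that the proof is not there. In Step 3, the claim that the combinatorial data ``determines the directions of all edges'' after normalizing two crossing holonomy vectors is unsupported: directions are continuous parameters, and the relations $\iota(\mathcal F_\theta,\mathcal F_{\theta'})=|\sin(\theta-\theta')|$ involve the unknown angles nonlinearly together with the unknown cylinder heights, so uniqueness of the solution to that system is essentially a restatement of the rigidity you are trying to prove. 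As it stands the argument has a genuine gap at its center and a wrong ingredient at its foundation; the honest course is to cite \cite{DLR} as the paper does, or to rebuild the converse on facts that hold for arbitrary quadratic differentials (e.g., density of cylinder \emph{curves} and their directions, rather than of complete cylinder decompositions).
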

\begin{remark}  Note that this theorem holds for finite-type surfaces, which allows a finite number of punctures.  We will apply it to a closed surface minus the cone points of a flat metric in the proof of the Support Rigidity Theorem in Section~\ref{S:support-rigidity}.
\end{remark}

When $\varphi_1$ has holonomy with order at least $3$, then by appealing to Proposition~\ref{P:affine deformation iff quadratic differential}, affine equivalence implies the stronger conclusion that $\varphi_1 = \varphi_2$ in $\Flat(S)$.

%%%%%%%%%%%%%%%%%%%%%%%%
\subsection{Currents and chains for flat metrics} \label{S:flat liouville}  \label{S:chains section}
%%%%%%%%%%%%%%%%%%%%%%%%

The fact that $SL(2,\R)$ orbits in $\Flat_2(S)$  are determined by cylinder sets (Theorem~\ref{DLR} above) 
was used in \cite{DLR} as a step in proving that marked simple closed curves are spectrally rigid over $\Flat_2(S)$; that is, the lengths of those curves entirely determine a quadratic differential metric.  
This rigidity result was generalized in \cite{BL} to  all of $\Flat(S)$, although in this setting all closed curves are needed, not just simple ones.  
In both \cite{DLR} and \cite{BL}, a key object is the Liouville current for a flat metric $\varphi \in \Flat(S)$.  In this 
section we recall some of the key properties of the current, particularly relating to its support.  

In \cite{DLR}, the Liouville current $L_\varphi$ associated to a flat metric $\varphi$ is defined as a kind of average of the measured foliations in all directions and which 
satisfies the geometricity property $\iota(\alpha,L_\varphi) = \ell_\varphi(\alpha)$ for all closed curves $\alpha$.  
For general $\varphi \in \Flat(S)$, the Liouville current $L_\varphi$ is defined in \cite{BL} as the push-forward via the "endpoint map" $\partial_{\tilde \varphi} \colon \mathcal G(\tilde \varphi) \to \mathcal G(\tilde S)$ of a $\pi_1S$--invariant measure on $\mathcal G(\tilde \varphi)$ obtained from Riemannian geometry.  It is also shown there to have the same geometricity property as in \cite{DLR}.
One of the key facts we will need here, proved in  \cite[Proposition~3.4 and Corollary~3.5]{BL}, involves the support of $L_\varphi$.
Recall from \S\ref{S:basic} that the set of basic geodesics $\G^*(\tilde \varphi) \subset \G(\tilde \varphi)$ is the closure of the set of nonsingular geodesics, and that they are completely described
in Proposition~\ref{P:basic}.  We will write $\G^*_{\tilde \varphi} = \partial_{\tilde \varphi}(\mathcal G^*(\tilde \varphi))  \subset \G(\tilde S)$ for the set of endpoints on $S^1_\infty$ of the basic geodesics.

\begin{proposition} \label{P:support_are_basic_endpoints} For any $\varphi \in \Flat(S)$, the support of its Liouville current is precisely given by endpoints of basic geodesics:
$$\supp(L_\varphi) = \G^*_{\tilde \varphi} \subset \G(\tilde S)$$
\end{proposition}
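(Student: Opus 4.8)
The plan is to prove a double inclusion between $\supp(L_\varphi)$ and $\G^*_{\tilde\varphi}$, using the defining description of $L_\varphi$ as a push-forward under the endpoint map $\partial_{\tilde\varphi}$ of a $\pi_1S$-invariant measure, call it $\mu_\varphi$, on $\mathcal G(\tilde\varphi)$ built from Riemannian (Liouville) measure on the unit tangent bundle of $(S,\varphi)$. The first and easier inclusion is $\supp(L_\varphi) \subseteq \G^*_{\tilde\varphi}$. Here the key point is to identify the support of the ``upstairs'' measure $\mu_\varphi$. Since $\mu_\varphi$ comes from the local product of Lebesgue-type measure on $S\setminus\Sigma$ with the circle of directions, and singular geodesics through a cone point form a measure-zero set (they are parametrized by a lower-dimensional family: a point of $S\setminus\Sigma$ together with the discrete set of directions aimed into a cone point, or by a cone point together with a direction), the support of $\mu_\varphi$ is contained in the closure of the nonsingular geodesics, which is exactly $\mathcal G^*(\tilde\varphi)$ by definition. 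Then, because $\partial_{\tilde\varphi}$ is continuous and closed (as stated in \S\ref{S:spaces of geodesics}), it carries $\supp(\mu_\varphi)$ into a closed set containing the image, so $\supp(L_\varphi) = \supp((\partial_{\tilde\varphi})_*\mu_\varphi) \subseteq \partial_{\tilde\varphi}(\supp\mu_\varphi) \subseteq \partial_{\tilde\varphi}(\mathcal G^*(\tilde\varphi)) = \G^*_{\tilde\varphi}$.

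The reverse inclusion $\G^*_{\tilde\varphi} \subseteq \supp(L_\varphi)$ is where the real content lies, and I expect this to be the main obstacle. It suffices, since $\supp(L_\varphi)$ is closed and $\G^*_{\tilde\varphi}$ is the closure of the endpoints of nonsingular geodesics, to show that the endpoint pair $\partial_{\tilde\varphi}(\tilde\gamma)$ of every \emph{nonsingular} bi-infinite geodesic $\tilde\gamma$ lies in $\supp(L_\varphi)$; equivalently, every neighborhood $U$ of $\{x,y\} = \partial_{\tilde\varphi}(\tilde\gamma)$ in $\mathcal G(\tilde S)$ has $L_\varphi(U) > 0$. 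To see this, take a small embedded nonsingular geodesic segment $\sigma$ transverse to $\tilde\gamma$ at an interior point, and consider the set of geodesics that cross $\sigma$ in a direction close to that of $\tilde\gamma$; since $\tilde\gamma$ is nonsingular, a short subsegment of it has a genuine flat (strip) neighborhood on which nearby geodesics remain nonsingular for a long time, so their endpoints can be made to lie in $U$. This family of geodesics corresponds under the local product structure to an open set of positive measure in the unit tangent bundle (an open set of basepoints on $\sigma$ times an open arc of directions), hence has positive $\mu_\varphi$-measure, and its image is contained in $U$, giving $L_\varphi(U) = (\partial_{\tilde\varphi})_*\mu_\varphi(U) > 0$.

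The technical heart of the second inclusion is the ``openness/stability of nonsingularity'' claim: that a nonsingular geodesic has an open neighborhood (in the right sense) of geodesics that stay nonsingular long enough for their endpoints at infinity to be controlled. The cleanest way to handle this is to fix a long subarc $\tilde\gamma|_{[-T,T]}$ at positive distance $\epsilon$ from $\Sigma(\tilde\varphi)$; then any geodesic segment starting $\epsilon/2$-close to $\tilde\gamma(0)$ with direction within a small angle stays within an $\epsilon$-tube of $\tilde\gamma|_{[-T,T]}$ (by CAT(0) convexity of the distance function, \cite[Chapter II]{BridHaef}), hence avoids $\Sigma$, and by taking $T \to \infty$ one sees the endpoint pairs of such geodesics converge to $\{x,y\}$, so for $T$ large enough they lie in the prescribed neighborhood $U$. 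One should be a little careful that these geodesics extend to bi-infinite geodesics whose \emph{full} endpoint pair is in $U$ — this follows because $\partial_{\tilde\varphi}$ is proper and continuous and the relevant set of geodesics is precompact, so shrinking the angle and increasing $T$ forces the global endpoints into $U$.

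I should note that both inclusions together with Proposition~\ref{P:basic} immediately give the stated equality; no further argument is needed, since $\G^*_{\tilde\varphi}$ was \emph{defined} as $\partial_{\tilde\varphi}(\mathcal G^*(\tilde\varphi))$ and $\mathcal G^*(\tilde\varphi)$ as the closure of the nonsingular geodesics. In fact the whole proposition is essentially a matter of correctly transporting ``support'' through the push-forward along a continuous closed proper map, once the support of the Riemannian measure on $\mathcal G(\tilde\varphi)$ has been pinned down; this is why the result is attributed to \cite[Proposition~3.4 and Corollary~3.5]{BL}, and here it is being recalled rather than reproved in full detail.
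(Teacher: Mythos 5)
The paper does not actually prove this proposition: it is recalled as a known fact with the proof attributed to \cite[Proposition~3.4 and Corollary~3.5]{BL}, so there is no in-paper argument to compare yours against. That said, your reconstruction is structurally sound and is the expected argument: the inclusion $\supp(L_\varphi)\subseteq\G^*_{\tilde\varphi}$ follows from the facts that the singular geodesics form a $\mu_\varphi$--null set (countably many singular directions over each basepoint, then Fubini) and that supports push forward correctly along the continuous, closed map $\partial_{\tilde\varphi}$; the reverse inclusion reduces, since $\partial_{\tilde\varphi}$ is continuous and closed so that $\G^*_{\tilde\varphi}$ is the closure of the endpoints of the nonsingular geodesics, to showing every neighborhood of $\partial_{\tilde\varphi}(\tilde\gamma)$ for $\tilde\gamma$ nonsingular carries a positive-measure family of geodesics, which your transversal-times-arc-of-directions set provides.

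One justification is wrong as stated, though the statement it supports is true. Convexity of $t\mapsto d(\gamma_1(t),\gamma_2(t))$ in a CAT(0) space controls this function on $[0,T]$ only if you control it at \emph{both} endpoints; from $d(\gamma_1(0),\gamma_2(0))\le\epsilon/2$ and a small angle at $t=0$, convexity yields a \emph{lower} bound on subsequent separation, not an upper bound. The shadowing estimate you need comes instead from flatness: the closed $\epsilon$--neighborhood of $\tilde\gamma([-T,T])$ contains no cone points and develops isometrically into $\mathbb R^2$, where two straight lines starting distance $\delta$ apart at angle $\theta$ stay within $\delta+T\sin\theta$ for time $T$; a first-exit-time argument then shows the nearby geodesic never leaves the $\epsilon$--tube once $\delta$ and $\theta$ are small relative to $\epsilon$ and $T$. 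With that repair, and with your compactness/properness argument promoting control on $[-T,T]$ to control of the endpoints at infinity, the sketch is correct and presumably close to the argument in \cite{BL}.
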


Next we introduce a key tool from \cite{BL} used to study Liouville currents for flat metrics, the set of {\em chains}, 
a technical device to encode information about cone points in terms  of boundary points.  The definition is somewhat technical, and involves an auxiliary countable subset $\Omega \subset \G^*_{\tilde \varphi}$.

\begin{definition} Fix a flat metric $\varphi$ and countable subset $\Omega \subset \G^*_{\tilde \varphi}$.  
Then a {\em $(G^*_{\tilde \varphi},\Omega)$--chain} is a bi-infinite sequence of boundary points 
${\bf{x}}=(\ldots,x_0,x_1,\ldots)\subset S^1_{\infty}$ such that 
\begin{enumerate}[(i)]
\item $\{x_i, x_{i+1}\}\in \G^*_{\tilde \varphi} \setminus \Omega$,
\item $x_i, x_{i+1}, x_{i+2}$ is a counterclockwise-ordered triple of distinct points, and
\item $[\{x_i,x_{i+1}\}, \{x_{i+1},x_{i+2}\}]\cap \G^*_{\tilde \varphi} = \{\{x_i,x_{i+1}\}, \{x_{i+1}, x_{i+2}\}\}$
\end{enumerate} 
for all $i$.  Let the set of $(\G^*_{\tilde \varphi},\Omega)$--chains be denoted $\Chain(\G^*_{\tilde \varphi},\Omega)$.
\end{definition}

\begin{remark} This definition is a more concise version of the one given in \cite{BL} which involved first defining chains using $\G^*_{\tilde \varphi}$ and then introducing the countable set later, and additionally allowed for finite or half-infinite chains.  Furthermore, the notation differs from \cite{BL}, where these were called $(L_\varphi,\Omega)$--chains.  Since the definition depends only on $\supp(L_\varphi) = \G^*_{\tilde \varphi}$, this notation is  more descriptive and will be useful below.
We note that chains may be periodic, and hence may contain only a finite number of boundary points, repeated infinitely often.
\end{remark}

A chain arises naturally from a cone point $\zeta \in \Sigma(\tilde \varphi)$ via families of basic geodesics meeting 
only $\zeta$ and winding around in a manner such that two successive ones are cone-point asymptotic.
In \cite[Proposition~4.1]{BL} it was shown that essentially all chains arise in this way.

The set $\Omega$ is used to discard geodesics meeting more than one cone point.  More precisely, recall that the set $\G^2(\tilde \varphi) \subset \G^*(\tilde \varphi)$ of basic geodesics meeting at least two cone points is countable, by Proposition~\ref{P:basic} (Structure of basic geodesics).  We let $\G^2_{\tilde \varphi} = \partial_{\tilde \varphi} (\G^2(\tilde \varphi)) \subset \G^*_{\tilde \varphi}$ denote the countable set of endpoints of such geodesics, and then require our countable set $\Omega$ to contain $\G^2_{\tilde \varphi}$.

\begin{proposition} \label{P:chains from geodesics}  Suppose $\Omega \subset \G^*_{\tilde \varphi}$ is any countable set containing $\G^2_{\tilde \varphi}$ and ${\bf x} \in \Chain(\G^*_{\tilde \varphi},\Omega)$ is any chain.
Then there exists a unique cone point $\zeta$ and sequence ${\boldsymbol \gamma} = (\ldots,\tilde \gamma_0,\tilde \gamma_1,\ldots) \subset \G^*({\tilde \varphi})$ of geodesics meeting $\zeta$, and no other cone points,  
 such that $\partial_{\tilde \varphi}(\boldsymbol \gamma) = {\bf x}$, i.e., $\partial_{\tilde \varphi}(\tilde\gamma_i)=\{x_i,x_{i+1}\}$
 for each $i$. Moreover, every cone point is related in this way to some chain.
\end{proposition}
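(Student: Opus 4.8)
The plan is to build the pair $(\zeta,\boldsymbol\gamma)$ from a chain and establish its uniqueness, then to run the reverse construction producing a chain from any given cone point.

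\emph{From a chain to a cone point.} Given ${\bf x}=(\ldots,x_0,x_1,\ldots)\in\Chain(\G^*_{\tilde\varphi},\Omega)$, condition (i) says $\{x_i,x_{i+1}\}\in\G^*_{\tilde\varphi}=\partial_{\tilde\varphi}(\G^*(\tilde\varphi))$, so for each $i$ we may pick a basic geodesic $\tilde\gamma_i$ with $\partial_{\tilde\varphi}(\tilde\gamma_i)=\{x_i,x_{i+1}\}$; since $\{x_i,x_{i+1}\}\notin\Omega\supseteq\G^2_{\tilde\varphi}$, Proposition~\ref{P:basic} forces $\tilde\gamma_i$ to meet at most one cone point (a basic geodesic through two or more cone points lies in $\G^2(\tilde\varphi)$, so its endpoint pair is in $\G^2_{\tilde\varphi}$). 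Consecutive $\tilde\gamma_i,\tilde\gamma_{i+1}$ share the endpoint $x_{i+1}$, hence are asymptotic in that direction, so by the discussion in \S\ref{S:basic} either $\tilde\gamma_i=\tilde\gamma_{i+1}$, or they are cone-point asymptotic, or they bound a flat half-strip. The first case gives $x_i=x_{i+2}$, contradicting (ii); the third case, via the Flat Strip Theorem (Theorem~\ref{T:flat strips}), puts the half-strip inside a Euclidean cylinder, whose nonsingular core geodesics lift to basic geodesics with one endpoint $x_{i+1}$ and the other strictly between $x_i$ and $x_{i+2}$ on the arc missing $x_{i+1}$, i.e.\ to elements of $[\{x_i,x_{i+1}\},\{x_{i+1},x_{i+2}\}]\cap\G^*_{\tilde\varphi}$ distinct from the two allowed pairs, contradicting (iii). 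Hence $\tilde\gamma_i$ and $\tilde\gamma_{i+1}$ are cone-point asymptotic at some $\tilde\zeta_i\in\Sigma(\tilde\varphi)$, and $\tilde\zeta_i$ lies on both; so $\tilde\gamma_i$ contains $\tilde\zeta_{i-1}$ and $\tilde\zeta_i$, and since it meets at most one cone point, all the $\tilde\zeta_i$ coincide in a single cone point $\zeta$ lying on every $\tilde\gamma_i$, which meets no other cone point. By construction $\partial_{\tilde\varphi}(\boldsymbol\gamma)={\bf x}$.

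\emph{Uniqueness.} It is enough to show that each $\{x_i,x_{i+1}\}$ is the endpoint pair of a unique $\tilde\varphi$--geodesic, for then $\boldsymbol\gamma$ is forced and $\zeta$ is pinned down as the unique cone point lying on $\tilde\gamma_0$. If two distinct geodesics shared these endpoints they would bound a flat strip (\cite[Theorem~II.2.13]{BridHaef}), hence lie in a cylinder by the Flat Strip Theorem, making $\{x_i,x_{i+1}\}$ a cylinder curve; then the nonsingular core geodesics have endpoint pair $\{x_i,x_{i+1}\}$ and share $x_{i+1}$ with $\tilde\gamma_{i+1}$, and being nonsingular they can neither equal $\tilde\gamma_{i+1}$ (which would violate (ii)) nor be cone-point asymptotic to it, so they bound a flat half-strip with $\tilde\gamma_{i+1}$ --- and one derives the same contradiction with (iii) as above.

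\emph{From a cone point to a chain.} Fix $\zeta\in\Sigma(\tilde\varphi)$ with cone angle $\theta>2\pi$, and identify the space of directions at $\zeta$ with the circle $\R/\theta\Z$; for a direction $d$ let $x(d)$ be the endpoint of the geodesic ray from $\zeta$ in direction $d$. Since distinct rays from a common point in the CAT(0) space $\tilde S$ have distinct endpoints and every point of $S^1_\infty$ is the endpoint of such a ray, $d\mapsto x(d)$ is a homeomorphism $\R/\theta\Z\to S^1_\infty$, orientation preserving after a choice of orientation. Only countably many directions $d$ are bad, in the sense that the geodesic through $\zeta$ with halves in directions $d$ and $d+\pi$ meets a second cone point or has endpoint pair in $\Omega$; choose $d_0$ so that $d_i:=d_0+i\pi$ is good for every $i\in\Z$, and set $x_i=x(d_i)$ and let $\tilde\gamma_i$ be the geodesic through $\zeta$ with halves in directions $d_i,d_{i+1}$. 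Each $\tilde\gamma_i$ makes angle exactly $\pi$ on one side at $\zeta$ and meets no other cone point, so is basic of type (ii) in Proposition~\ref{P:basic}, giving (i); since $\theta>2\pi$ the directions $d_i<d_{i+1}<d_{i+2}$ lie in an arc of length less than $\theta$, so $x_i,x_{i+1},x_{i+2}$ are distinct and counterclockwise, giving (ii); and $\tilde\gamma_i,\tilde\gamma_{i+1}$ share the ray from $\zeta$ to $x_{i+1}$ and diverge there (as $d_i\ne d_{i+2}$), so are cone-point asymptotic at $\zeta$. For (iii), a direct check shows $[\{x_i,x_{i+1}\},\{x_{i+1},x_{i+2}\}]$ consists of the pairs $\{x_{i+1},x(u)\}$ with $u$ in the direction-arc of length $\theta-2\pi$ running from $d_i$ to $d_{i+2}$ and missing $d_{i+1}$; for $u$ strictly inside this arc, the geodesic joining $x_{i+1}$ to $x(u)$ passes through $\zeta$ making angle strictly greater than $\pi$ on both sides, so by Proposition~\ref{P:basic} it is not basic. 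Thus ${\bf x}\in\Chain(\G^*_{\tilde\varphi},\Omega)$, and the cone point produced from it by the first part is $\zeta$.

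\emph{Main obstacle.} The delicate step is the flat-strip analysis, used both to exclude the flat half-strip alternative and to prove uniqueness of the realizing geodesics: each time one must convert the presence of a flat strip into a basic geodesic whose endpoints sit between the two allowed pairs and so violate condition (iii), which requires the Flat Strip Theorem together with a careful reading of betweenness for pairs of boundary points sharing an endpoint. Alternatively, the existence of a realizing family and the ``moreover'' clause can be extracted directly from \cite[Proposition~4.1]{BL}, leaving the single--cone--point refinement, the uniqueness, and the verification of (iii) for the constructed chain as the content to be checked.
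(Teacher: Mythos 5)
First, note that the paper does not actually prove this proposition in the text: it is imported wholesale from \cite[Proposition~4.1 and Lemma~4.2]{BL}, so your self-contained reconstruction is necessarily a different route, and its overall architecture (cone-point asymptoticity forced by ruling out coincidence and flat half-strips, then collapsing all the $\tilde\zeta_i$ to one $\zeta$ via the single-cone-point constraint; the reverse construction via directions $d_0+i\pi$ at $\zeta$) is the right one. However, there are two concrete gaps.

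The first is in the half-strip exclusion, which you reuse verbatim for uniqueness. After invoking Theorem~\ref{T:flat strips}, the maximal strip containing the half-strip has ideal endpoints $\{x_{i+1},y'\}$, and a nonsingular core geodesic is a basic geodesic with these endpoints; disjointness from $\tilde\gamma_i$ and $\tilde\gamma_{i+1}$ places $y'$ only in the \emph{closed} arc from $x_{i+2}$ to $x_i$. Your contradiction with condition (iii) evaporates precisely in the boundary cases $y'=x_i$ or $y'=x_{i+2}$, since then $\{x_{i+1},y'\}$ is one of the two allowed pairs. These cases are not vacuous a priori and must be killed by a different mechanism: if $y'=x_i$, say, then $\tilde\gamma_{i+1}$ has a forward ray in the maximal strip parallel to its core, cannot be a full parallel line (as $x_{i+2}\neq x_i$), hence must run along the strip's boundary, which is a bi-infinite concatenation of saddle connections; this forces $\tilde\gamma_{i+1}$ to meet infinitely many cone points, so $\{x_{i+1},x_{i+2}\}\in\G^2_{\tilde\varphi}\subseteq\Omega$, contradicting condition (i) rather than (iii). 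Your write-up asserts "strictly between" without this argument.

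The second gap is in the reverse direction. The map $d\mapsto x(d)$ is not a homeomorphism $\R/\theta\Z\to S^1_\infty$: for the countably many directions whose straight continuation hits another cone point, the ray from $\zeta$ is not unique, so $x(d)$ is multivalued there (what exists is a continuous monotone surjection $S^1_\infty\to\R/\theta\Z$ in the other direction). More importantly, your verification of condition (iii) parametrizes the between pairs as $\{x_{i+1},x(u)\}$ and asserts that the corresponding geodesics pass through $\zeta$; but a basic geodesic $\tilde\beta$ with endpoints $x_{i+1}$ and a point $v$ of the far arc is only known to be \emph{asymptotic} to the nonsingular ray $\rho_{i+1}$ from $\zeta$ to $x_{i+1}$. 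You must again run the dichotomy of \S\ref{S:basic}: a flat half-strip between $\tilde\beta$ and $\rho_{i+1}$ is impossible (the cylinder from Theorem~\ref{T:flat strips} would force the nonsingular ray $\rho_{i+1}$ either to extend backward forever inside the strip, contradicting that it terminates at $\zeta$, or to lie on the singular strip boundary), so $\tilde\beta$ shares a maximal subray with $\rho_{i+1}$, which must begin at $\zeta$ since $\rho_{i+1}$ is nonsingular; only then does the angle argument at $\zeta$ apply, and one must also use that the resulting second ray in a direction making angle exactly $\pi$ is the good ray to $x_i$ or $x_{i+2}$ (ruling out type (iii) basic geodesics, not just type (ii)). Both gaps are repairable with the tools you cite, but as written the stated contradictions do not close in the boundary and asymptotic cases.
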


From this we obtain a well-defined, surjective map 
 from chains to the set of cone points $\Sigma(\tilde \varphi)$:
\begin{equation} \label{E:chain to cone} \partial^{\#}_{\tilde{\varphi}} \colon \Chain(\G^*_{\tilde \varphi} ,\Omega) \to \Sigma(\tilde{\varphi}). \end{equation}
(See \cite[Lemma~4.2]{BL}.)  Because $\partial_{\tilde \varphi}$ is $\pi_1$--equivariant, so is $\partial^{\#}_{\tilde \varphi}$.

The upshot of this is that when we have two metrics $\varphi_1,\varphi_2 \subset \Flat(S)$ whose Liouville currents have the same support 
$\G^*_{\tilde \varphi_1} = \G^*_{\tilde \varphi_2}$, 
we can take $\Omega$ to be a countable subset containing $\G^2_{\tilde \varphi_1} \cup \G^2_{\tilde \varphi_2}$ so that the resulting chains are the same, producing an identification of the cone points of the two metrics.
The following is essentially a consequence of \cite[Lemma~4.4]{BL} and the proof of \cite[Theorem~5.1]{BL}.

\begin{proposition} \label{P:optimal Omega}
Suppose that $\varphi_1,\varphi_2 \in \Flat(S)$ are two metrics whose Liouville currents have the same support, which we denote $\G^* = \G^*_{\tilde \varphi_1} = \G^*_{\tilde \varphi_2}$. 
Then for any countable set $\Omega \subset \G^*$ containing $\G^2_{\tilde \varphi_1} \cup \G^2_{\tilde \varphi_2}$, we have
\[ \Chain(\G^*,\Omega) = \Chain(\G^*_{\tilde \varphi_1},\Omega) = \Chain(\G^*_{\tilde \varphi_2},\Omega),\]
and there exists a $\pi_1S$--equivariant bijection
\[ \tilde f^0 \colon \Sigma(\tilde \varphi_1) \to \Sigma(\tilde \varphi_2) \]
so that $\tilde f^0 \circ \partial^\#_{\tilde{\varphi}_1} = \partial^\#_{\tilde{\varphi}_2}$.
\end{proposition}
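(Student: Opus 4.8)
For the first assertion I would observe that conditions (i)--(iii) defining a $(\G^*_{\tilde\varphi},\Omega)$--chain refer only to the set $\G^*_{\tilde\varphi}\subset\G(\tilde S)$ and the auxiliary countable set $\Omega$, never to the metric $\varphi$ itself. Since $\G^*_{\tilde\varphi_1}=\G^*_{\tilde\varphi_2}=\G^*$ by hypothesis and one $\Omega$ is fixed throughout, the three sets $\Chain(\G^*,\Omega)$, $\Chain(\G^*_{\tilde\varphi_1},\Omega)$, $\Chain(\G^*_{\tilde\varphi_2},\Omega)$ are cut out by identical conditions and so coincide; write $\mathcal C$ for this common set of chains.

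To produce $\tilde f^0$, I would first note that the hypothesis $\G^2_{\tilde\varphi_1}\cup\G^2_{\tilde\varphi_2}\subset\Omega$ is exactly what lets Proposition~\ref{P:chains from geodesics} be applied to \emph{both} metrics with this one $\Omega$: any chain ${\bf x}\in\mathcal C$ has each pair $\{x_i,x_{i+1}\}$ in $\G^*\setminus\Omega$, hence outside $\G^2_{\tilde\varphi_i}$ for $i=1,2$, so the associated $\tilde\varphi_i$--geodesics meet at most one cone point of $\varphi_i$ and Proposition~\ref{P:chains from geodesics} supplies, for $i=1,2$, the $\pi_1S$--equivariant surjections $\partial^{\#}_{\tilde\varphi_i}\colon\mathcal C\to\Sigma(\tilde\varphi_i)$ of \eqref{E:chain to cone}. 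The construction then reduces to a single claim: $\partial^{\#}_{\tilde\varphi_1}$ and $\partial^{\#}_{\tilde\varphi_2}$ induce the \emph{same} partition of $\mathcal C$ into fibers. Granting this, setting $\tilde f^0(\tilde\zeta):=\partial^{\#}_{\tilde\varphi_2}({\bf x})$ for any ${\bf x}$ with $\partial^{\#}_{\tilde\varphi_1}({\bf x})=\tilde\zeta$ yields a well-defined bijection with $\tilde f^0\circ\partial^{\#}_{\tilde\varphi_1}=\partial^{\#}_{\tilde\varphi_2}$, and $\pi_1S$--equivariance is then automatic from equivariance of the two maps $\partial^{\#}_{\tilde\varphi_i}$ together with $\pi_1S$--invariance of $\G^*$.

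The substantive point, and the step I expect to be the main obstacle, is showing that $\partial^{\#}_{\tilde\varphi_1}$ and $\partial^{\#}_{\tilde\varphi_2}$ have the same fibers. For this I would appeal to \cite[Lemma~4.4]{BL}, using it to obtain an \emph{intrinsic} description of the fiber $(\partial^{\#}_{\tilde\varphi})^{-1}(\tilde\zeta)$: two chains determine the same cone point of $\varphi$ precisely when they are related by an equivalence on $\mathcal C$ phrased purely in terms of the cyclic-order and betweenness relations on pairs in $\G^*_{\tilde\varphi}$ --- geometrically this reflects that two chains at the same cone point arise from families of basic geodesics winding around that common point and hence interleave in a manner visible from the betweenness data alone, the countably many geodesics meeting two or more cone points having been discarded into $\Omega$. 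Since this equivalence relation depends only on the pair $(\G^*,\Omega)$, which is common to $\varphi_1$ and $\varphi_2$, its classes --- hence the fibers of $\partial^{\#}_{\tilde\varphi_1}$ and of $\partial^{\#}_{\tilde\varphi_2}$ --- coincide; this is exactly the correspondence of cone points assembled in the proof of \cite[Theorem~5.1]{BL}. The care needed is to verify that the pertinent statements of \cite{BL} can indeed be phrased without reference to the metric, and that they are insensitive to enlarging $\Omega$ among countable sets containing $\G^2_{\tilde\varphi_1}\cup\G^2_{\tilde\varphi_2}$ --- a larger $\Omega$ only shrinks $\mathcal C$ and restricts the maps $\partial^{\#}_{\tilde\varphi_i}$ without changing the rule, so the resulting $\tilde f^0$ is independent of $\Omega$ as well.
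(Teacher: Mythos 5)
Your argument is correct and takes essentially the same route as the paper, which offers no independent proof but simply states the proposition as ``essentially a consequence of \cite[Lemma~4.4]{BL} and the proof of \cite[Theorem~5.1]{BL}'' --- precisely the results you invoke for the substantive fiber-coincidence step. The first assertion is, as you note, immediate from the fact that the definition of a chain depends only on the pair $(\G^*,\Omega)$, and your reduction of the bijection to the metric-independence of the fibers of $\partial^{\#}_{\tilde\varphi_i}$ is the intended content of the citation.
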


Later we will see that by enlarging $\Omega$ further, we can also make the nonsingular geodesics match up for a pair of metrics $\varphi_1,\varphi_2$ as we did here for chains and cone points.

\subsection{Identifying geodesics} \label{S:convention1}
%%%%%%%%%%%%%%%

For two flat metics whose Liouville currents have the same support, we combine facts we know about basic geodesics and flat strips to provide a geometrically useful identification of "most of" the basic geodesics for the two metrics.
Recall that having the same support precisely means that  
$\G^*_{\tilde\varphi_1} = \G^*_{\tilde\varphi_2}$, 
or in other words the basic geodesics have the same endpoints.  Below, we will simply write
\[ \G^*=\G^*_{\tilde\varphi_1} = \G^*_{\tilde\varphi_2},\]
for the mutual support of the pair of Liouville currents.

Metrics in $\Flat(S)$ are only defined up to isotopy.  
In the previous Section~\ref{S:chains section} where chains are defined we recorded the fact that for 
any two metrics $\varphi_1,\varphi_2 \in \Flat(S)$ whose Liouville currents have the same support, 
we can take any countable set   $\Omega$  
containing the boundary pairs of "multiple-cone-point geodesics" $\G^2_{\tilde \varphi_1} \cup \G^2_{\tilde \varphi_2} \subset \G^*$ in either metric, and using the chains, produce a $\pi_1S$--equivariant bijection between the cone points of the two metrics; see Proposition~\ref{P:optimal Omega}.
From arbitrary representatives of the metrics, we can therefore perform "point-pushing" isotopies on the cone points so that the two metrics have exactly the same set of cone points $\Sigma(\tilde \varphi_1) = \Sigma(\tilde \varphi_2)$ in a way that is compatible with the chains (i.e., so that $\partial^\#_{\tilde \varphi_1} = \partial^\#_{\tilde \varphi_2}$).

Second, for the same $\Omega$, we can observe that every pair $\{x,y\} \in \G^* \setminus \Omega$ has exactly one geodesic in its preimage by both $\partial_{\tilde \varphi_1}$ and $\partial_{\tilde \varphi_2}$,
\[ |\partial_{\tilde \varphi_1}^{-1}(\{x,y\})| =1 = |\partial_{\tilde \varphi_2}^{-1}(\{x,y\})|.\]
To see this, note that the only way $\partial_{\tilde \varphi_i}^{-1}(\{x,y\})$ can contain more than one geodesic is if it consists of a (closed) flat strip of parallel geodesics (see \cite[Theorem II.2.13]{BridHaef}).  By the Flat Strip Theorem~\ref{T:flat strips}, such a strip covers a maximal cylinder.  Since the boundary of such a cylinder is a concatenation of saddle connections in $S$, the boundary of the strip itself is a concatenation of saddle connections in $\tilde S$, and hence $\{x,y\} \in \G^2_{\tilde \varphi_i}$, which is a contradiction.

We can then enlarge the countable set $\Omega$ so that for any $\{x,y\} \in \G^* \setminus \Omega$, 
either $x,y$ can be ordered to become part of a bi-infinite (possibly periodic) chain, or else it is the image of nonsingular geodesics in both 
$\tilde \varphi_1$ and $\tilde \varphi_2$.  This is possible because, for each $i= 1,2$ and for each of the countably many cone points $\zeta \in \Sigma(\tilde \varphi_i)$, there are only countably many nonsingular rays emanating from $\zeta$ whose endpoints on $S^1_\infty$ fail to be extendable to a bi-infinite chain.  Thus there are countably many geodesics $\tilde \gamma \in \G^*_{\tilde \varphi_i}$ containing a single cone point whose endpoints are not part of a bi-infinite chain.  So by taking $\Omega$ to be the union of the $\partial_{\tilde \varphi_i}$--images of these, together with $\G^2_{\tilde \varphi_1} \cup \G^2_{\tilde \varphi_2}$, we obtain the desired property.

Therefore, setting $G_i = \partial_{\tilde \varphi_i}^{-1}(\G^* \setminus \Omega)$, for $i=1,2$, 
we can define $g \colon G_1 \to G_2$ to be the bijection
\[ g = \partial_{\tilde \varphi_2}^{-1} \circ \partial_{\tilde \varphi_1}|_{\G^*\setminus \Omega}.\]
That is, $g$ is the $\tilde \varphi_2$--straightening of each $\tilde \varphi_1$--geodesic in $G_1$, and it sends the set of singular geodesics to the set of singular geodesics.  Furthermore, observe that any singular geodesic $\tilde \gamma \in G_i$ containing a cone point $\zeta$ has endpoints in a chain whose $\partial_{\tilde \varphi_i}^\#$--image is in a chain determining $\zeta$.  It follows that $\zeta$ is the unique cone point in a singular geodesic $\tilde \gamma \in G_1$ if and only if it is the unique cone point in $g(\tilde \gamma) \in G_2$.

\begin{proposition} \label{P:g a homeo} 
For any pair of flat metrics with the same support $\G^*$, after discarding a suitable countable set $\Omega\subset\G^*$ the bijection $g \colon G_1 \to G_2$ between the basic geodesics in the two metrics is a homeomorphism. 
Furthermore, $g(\tilde \gamma)$ is singular if and only if $\tilde \gamma$ is.  In the singular case, the unique cone point of $\tilde \gamma$ is also contained in $g(\tilde \gamma)$.
\end{proposition}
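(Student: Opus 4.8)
The plan is to dispose of the second and third assertions first, since they follow from the discussion preceding the statement. By construction $\Omega \supset \G^2_{\tilde\varphi_1} \cup \G^2_{\tilde\varphi_2}$, so every geodesic in $G_1$ and in $G_2$ contains at most one cone point; and the identification of cone points with chains is compatible with the maps $\partial^{\#}_{\tilde\varphi_i}$ (Proposition~\ref{P:optimal Omega}), so $\tilde\gamma \in G_1$ is singular, with unique cone point $\zeta$, if and only if $g(\tilde\gamma) \in G_2$ is singular with the same unique cone point $\zeta$. In particular $g$ carries nonsingular geodesics to nonsingular geodesics. Thus the real work is to show that $g$ is a homeomorphism.

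For that, first I would record that the relevant spaces are metrizable: $\G(\tilde S)$ is a quotient of $S^1_\infty \times S^1_\infty$, and each $\G(\tilde\varphi_i)$, with the Chabauty--Fell topology, sits inside the hyperspace of closed subsets of the locally compact separable metric space $\tilde S$, which is itself compact metrizable. In particular $G_1$ is first countable, so it suffices to prove that $g$ is sequentially continuous; and since $g^{-1} = \partial_{\tilde\varphi_1}^{-1} \circ \partial_{\tilde\varphi_2}|_{\G^* \setminus \Omega}$ has exactly the same form with the roles of $\varphi_1$ and $\varphi_2$ reversed, it is enough to treat $g$ itself.

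The key step is then sequential continuity of $g$, which I would establish as follows. Suppose $\tilde\gamma_n \to \tilde\gamma$ in $G_1$. Since $\partial_{\tilde\varphi_1}$ is continuous, $\partial_{\tilde\varphi_1}(\tilde\gamma_n) \to \partial_{\tilde\varphi_1}(\tilde\gamma)$ in $\G(\tilde S)$, which by definition of $g$ says $\partial_{\tilde\varphi_2}(g(\tilde\gamma_n)) \to \partial_{\tilde\varphi_2}(g(\tilde\gamma))$. Now $C := \{\partial_{\tilde\varphi_2}(g(\tilde\gamma_n)) : n \geq 1\} \cup \{\partial_{\tilde\varphi_2}(g(\tilde\gamma))\}$ is compact in $\G(\tilde S)$, being a convergent sequence together with its limit, so by properness of $\partial_{\tilde\varphi_2}$ the preimage $\partial_{\tilde\varphi_2}^{-1}(C)$ is a compact subset of $\G(\tilde\varphi_2)$ containing every $g(\tilde\gamma_n)$. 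Hence any subsequence of $\{g(\tilde\gamma_n)\}$ has a further subsequence converging to some $\tilde\delta' \in \G(\tilde\varphi_2)$, and continuity of $\partial_{\tilde\varphi_2}$ forces $\partial_{\tilde\varphi_2}(\tilde\delta') = \partial_{\tilde\varphi_1}(\tilde\gamma) \in \G^* \setminus \Omega$. Because every pair in $\G^* \setminus \Omega$ has a unique $\partial_{\tilde\varphi_2}$--preimage, this gives $\tilde\delta' = g(\tilde\gamma)$. Since every subsequence of $\{g(\tilde\gamma_n)\}$ thus subconverges to the single point $g(\tilde\gamma)$, we conclude $g(\tilde\gamma_n) \to g(\tilde\gamma)$, so $g$ is sequentially continuous; the symmetric argument handles $g^{-1}$, and then $g$ is a homeomorphism.

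The main obstacle to keep in mind is that $\partial_{\tilde\varphi_2}$ is not globally injective and that $\G^* \setminus \Omega$ is not closed, so one cannot just invoke that a proper continuous bijection onto a locally compact space is a homeomorphism; the point of the argument above is that properness is used only to extract convergent subsequences, while their limits are pinned down by the single--preimage property over $\G^* \setminus \Omega$ --- which is precisely where the choice of $\Omega$, and through it the Flat Strip Theorem~\ref{T:flat strips}, enters. A secondary point to verify is that compactness of $C$ in $\G(\tilde S)$ keeps the endpoint pairs away from the diagonal, so that the limits $\tilde\delta'$ are genuine bi-infinite $\tilde\varphi_2$--geodesics rather than degenerate limits; this is automatic from working inside $\partial_{\tilde\varphi_2}^{-1}(C) \subset \G(\tilde\varphi_2)$.
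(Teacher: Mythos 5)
Your proposal is correct and follows essentially the same route as the paper: the singularity and cone-point statements are handled via the chain identification exactly as in the discussion preceding the proposition, and the homeomorphism is proved by the same subsequence argument --- compactness of the convergent endpoint sequence in $\G(\tilde S)$, properness of $\partial_{\tilde\varphi_2}$ to extract convergent subsequences in $\G^*(\tilde\varphi_2)$, and the single-preimage property over $\G^*\setminus\Omega$ to identify every subsequential limit with $g(\tilde\gamma)$. Your explicit remarks on metrizability (justifying the reduction to sequential continuity) and on why the limits cannot degenerate are sensible refinements of details the paper leaves implicit.
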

\begin{proof} Suppose $\{\tilde \gamma_n\}_{n = 1}^\infty \subset G_1$ is a sequence such that $\tilde \gamma_n \to \tilde \gamma \in G_1$.  To prove $g(\tilde \gamma_n) \to g(\tilde \gamma)$, it suffices to prove that every subsequence of $\{g(\tilde \gamma_n)\}$ has a subsequence that converges to $g(\tilde \gamma)$.  For this, observe that $\{\partial_{\tilde \varphi_1} (\tilde \gamma_n)\}_{n=1}^\infty \cup \{ \partial_{\tilde \varphi_1} (\tilde \gamma)\}$ is compact subset of $\G(\tilde S)$.  Therefore, $\{g(\tilde \gamma_n)\}_{n=1}^\infty \cup \{g(\tilde \gamma)\}$ is precompact in $\G(\tilde \varphi_2)$, and since $\G^*(\tilde \varphi_2)$ is a closed subspace containing $G_2$, our subset is precompact in $\G^*(\tilde \varphi_2)$.  For any subsequence of $\{g(\tilde \gamma_n)\}$ we may choose a convergent subsequence $\{g(\tilde \gamma_{n_k})\}_{k=1}^\infty$ with $g(\tilde \gamma_{n_k}) \to \tilde \gamma' \in \G^*(\tilde \varphi_2)$.  Then, by definition of $g$ (and the fact that limits in the Hausdorff space $\G(\tilde S)$ are unique) we have
\[ \partial_{\tilde \varphi_2}(\tilde \gamma') = \lim_{k \to \infty} \partial_{\tilde \varphi_2}(g(\tilde \gamma_{n_k})) = \lim_{k \to \infty} \partial_{\tilde \varphi_1}(\tilde \gamma_{n_k}) =  \partial_{\tilde \varphi_1}(\tilde \gamma).\]
Thus, $\tilde \gamma' = g(\tilde \gamma)$ since $\partial_{\tilde \varphi_2}$ is one-to-one on $G_2$.  Therefore, $g$ is continuous.  Reversing the roles of $G_1$ and $G_2$, we see that $g^{-1}$ is also continuous, and hence $g$ is a homeomorphism.
\end{proof}

We now adopt the machinery needed to define this homeomorphism as a convention for the remainder of the paper.  
For two flat metrics $\varphi_1,\varphi_2$ with equal support, we will be able to unambiguously write $\G^*$ for their mutual support
as pairs of endpoints; $\Omega\subset\G^*$ for a countable set as described above; $G_i= \partial_{\tilde\varphi_i}(\G^*\setminus \Omega)$
for the remaining basic geodesics; $\widetilde\Sigma=\Sigma(\tilde\varphi_1)=\Sigma(\tilde\varphi_2)$ for the cone points;
and $g:G_1\to G_2$ for the homeomorphism identifying the basic geodesics.

%%%%%%%%%%%%%%%%%%%%%%%%%%%%%%%%%%
%%%%%%%%%%%%%%%%%%%%%%%%%%%%%%%%%%
%%%%%%%%%%%%%%%%%%%%%%%%%%%%%%%%%%
\section{Support Rigidity Theorem}\label{sec_support}
%%%%%%%%%%%%%%%%%%%%%%%%%%%%%%%%%%
%%%%%%%%%%%%%%%%%%%%%%%%%%%%%%%%%%
%%%%%%%%%%%%%%%%%%%%%%%%%%%%%%%%%%

In this section we will prove our main theorem about Liouville currents of flat metrics.

\begin{supporttheorem}
Suppose $\varphi_1,\varphi_2$ are two unit-area flat metrics whose Liouville currents have the same support,
$\supp(L_{\varphi_1}) = \supp(L_{\varphi_2})$.  Then $\varphi_1,\varphi_2$ differ by an affine deformation, up to isotopy.  If either metric has holonomy of order greater than $2$ (i.e., is not induced by a quadratic differential, or does not support a foliation by straight lines), 
then equal support implies that $\varphi_1$ and $\varphi_2$ differ by isometry, isotopic to the identity.
\end{supporttheorem}

For this entire section, we assume that $\varphi_1$ and $\varphi_2$
are flat metrics with equal support, and we 
follow the conventions specified above to identify their cone points and their basic geodesics.

%%%%%%%%%%%%%%%%%%%%%%%%%%%
\subsection{Cone point partitions} \label{S:partitions}
%%%%%%%%%%%%%%%%%%%%%%%%%%%

For each $i = 1,2$, any geodesic $\tilde \gamma \in G_i$ divides $\tilde S$ into two {\em half-planes}, $\mathcal H_i^{\pm}(\tilde \gamma)$, which are the closures of the connected components of the complement of $\tilde \gamma$.  When $\tilde \gamma$ is a nonsingular geodesic, each cone point lies in exactly one of these two half-planes, and thus $\tilde \gamma$ determines a partition of $\widetilde \Sigma$ into two disjoint subsets depending on which side of $\tilde \gamma$ the point lies.  When $\tilde \gamma$ is singular, it contains exactly one cone point, and makes cone angle $\pi$ on one side.  We then declare the cone point to lie on the side opposite the one in which it makes an angle $\pi$, and so $\tilde \gamma$ also determines a partition of $\widetilde \Sigma$ into two disjoint subsets.

\begin{lemma}[Cone point partitions are well-defined]
\label{geodcomb}
For any  $\tilde \gamma \in G_1$, the geodesic $\tilde\gamma$ and its image $g(\tilde\gamma)$ define the same partition of $\widetilde \Sigma$.
\end{lemma}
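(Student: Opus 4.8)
The plan is to show that the partition of $\widetilde\Sigma$ induced by $\tilde\gamma$ is detected purely in terms of the geometry of $\G^*$ — that is, in terms of betweenness and the chain structure — and hence is the same whether computed in $\varphi_1$ or in $\varphi_2$. The key idea is that for a cone point $\tilde\zeta \in \widetilde\Sigma$, choosing a chain ${\bf x}$ with $\partial^\#_{\tilde\varphi_i}({\bf x}) = \tilde\zeta$ gives, via Proposition~\ref{P:chains from geodesics}, a bi-infinite family of basic geodesics $\tilde\gamma_j$ (all through $\tilde\zeta$) in each metric, and these geodesics ``wind around'' $\tilde\zeta$ monotonically. Since a chain is an object defined only from $\G^*$ and $\Omega$, and since $\partial^\#_{\tilde\varphi_1}$ and $\partial^\#_{\tilde\varphi_2}$ agree under our conventions (Proposition~\ref{P:optimal Omega}), the same chain ${\bf x}$ computes $\tilde\zeta$ for both metrics. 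So I want to read off ``which side of $\tilde\gamma$ does $\tilde\zeta$ lie on'' from the relation between $\partial_{\tilde\varphi_i}(\tilde\gamma) = \{x,y\}$ and the boundary points $x_j$ of a chain at $\tilde\zeta$.

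First I would set up the dictionary: for a nonsingular $\tilde\gamma \in G_i$ with endpoints $\{x,y\}$, and a cone point $\tilde\zeta$ not on $\tilde\gamma$, pick a chain ${\bf x}=(\ldots,x_0,x_1,\ldots)$ with $\partial^\#_{\tilde\varphi_i}({\bf x})=\tilde\zeta$ realized by geodesics $\tilde\gamma_j$ through $\tilde\zeta$ with endpoints $\{x_j,x_{j+1}\}$. The union $\bigcup_j \tilde\gamma_j$ is a connected set containing $\tilde\zeta$ and disjoint from $\tilde\gamma$ (if some $\tilde\gamma_j$ crossed $\tilde\gamma$ transversely we would need $\{x_j,x_{j+1}\}$ to link $\{x,y\}$; the cases where they touch non-transversely at $\tilde\zeta$ can't happen since $\tilde\zeta\notin\tilde\gamma$, and tangential contact along a common subray would force a saddle-connection configuration putting $\{x,y\}\in\G^2_{\tilde\varphi_i}\subset\Omega$, contradicting $\tilde\gamma\in G_i$). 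Hence all the $x_j$ lie in a single component of $S^1_\infty\setminus\{x,y\}$, and which component is a combinatorial condition on the cyclic order of $\{x,y\}$ and the $x_j$'s on $S^1_\infty$ — this is exactly what ``$\tilde\zeta$ lies on the $+$ or $-$ side'' means, because the half-plane $\mathcal H_i^\pm(\tilde\gamma)$ has ideal boundary equal to the corresponding arc. For a singular $\tilde\gamma\in G_1$ with its one cone point $\tilde\zeta_0$: by Proposition~\ref{P:g a homeo}, $g(\tilde\gamma)$ is singular with the same cone point $\tilde\zeta_0$, and I have to check that the convention ``$\tilde\zeta_0$ lies opposite the side making angle $\pi$'' also agrees — here I'd use that $g$ is a homeomorphism (Proposition~\ref{P:g a homeo}) to approximate $\tilde\gamma$ by nonsingular geodesics $\tilde\delta_n\to\tilde\gamma$ in $G_1$, so $g(\tilde\delta_n)\to g(\tilde\gamma)$; the partition is locally constant enough on each side away from $\tilde\zeta_0$ that the nonsingular case passes to the limit, and the side of $\tilde\zeta_0$ itself is forced by the requirement that the partition be into two nonempty sets separated by $\tilde\gamma$ (or handled by noting that a singular geodesic through $\tilde\zeta_0$ is a limit of nonsingular geodesics with $\tilde\zeta_0$ on the prescribed side, consistent with the angle-$\pi$ convention).

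The main obstacle I expect is the bookkeeping around non-transverse intersections and the endpoints-versus-geodesics subtlety: a basic geodesic and a chain-geodesic might share an ideal endpoint, or a chain-geodesic through $\tilde\zeta$ might be asymptotic to $\tilde\gamma$ without crossing it, so that ``$x_j$ in a component of $S^1_\infty\setminus\{x,y\}$'' degenerates to $x_j\in\{x,y\}$. I would deal with this by enlarging $\Omega$ (as permitted — $\Omega$ was only ever required to be countable and contain $\G^2_{\tilde\varphi_1}\cup\G^2_{\tilde\varphi_2}$) to also throw away the countably many basic geodesics that share an endpoint with some cone-point-asymptotic family, or alternatively by observing that since the full chain ${\bf x}$ has infinitely many distinct $x_j$ accumulating somewhere, all but finitely many $x_j$ are genuinely in the interior of one component, and that suffices to pin down the side. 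Once the side of $\tilde\zeta$ is expressed by such an $\Omega$-and-$\G^*$-intrinsic condition, the conclusion is immediate: the condition does not refer to $i$, so $\tilde\gamma$ and $g(\tilde\gamma)$ put $\tilde\zeta$ on the same side, and since $\tilde\zeta\in\widetilde\Sigma$ was arbitrary, $\tilde\gamma$ and $g(\tilde\gamma)$ induce the same partition of $\widetilde\Sigma$.
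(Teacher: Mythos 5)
There is a genuine gap, and it sits at the heart of your argument. You claim that for a cone point $\tilde\zeta\notin\tilde\gamma$, the union $\bigcup_j\tilde\gamma_j$ of the chain geodesics at $\tilde\zeta$ is disjoint from $\tilde\gamma$, so that all the chain points $x_j$ lie in a single component of $S^1_\infty\setminus\{x,y\}$. This is false. A chain at $\tilde\zeta$ consists of geodesics whose rays from $\tilde\zeta$ advance by angle exactly $\pi$ at each step inside a cone of total angle $>2\pi$; as $j\to\pm\infty$ these rays wind around $\tilde\zeta$ indefinitely and point in essentially every direction (this is exactly the ``winding'' you yourself invoke). Since $\tilde\zeta$ is at finite distance from $\tilde\gamma$, infinitely many of the $\tilde\gamma_j$ cross $\tilde\gamma$ transversely, their endpoints $\{x_j,x_{j+1}\}$ do link $\{x,y\}$, and there is no contradiction in that --- two basic geodesics linking is completely generic. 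Consequently the $x_j$ wrap around all of $S^1_\infty$ (densely, or periodically spread around it), so there is no ``component containing the chain,'' and your fallback that all but finitely many $x_j$ accumulate in one component fails for the same reason. The combinatorial invariant you want to read off from the chain simply does not exist.

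The repair, which is what the paper does, is to replace the whole chain by a \emph{single} well-chosen singular basic geodesic $\tilde\delta\in G_1$ through $\zeta$ that is disjoint from $\tilde\gamma$: parallel-transport the direction of $\tilde\gamma$ along a segment from $\tilde\gamma$ to $\zeta$ (making choices at any cone points encountered) and perturb into $G_1$. Such a $\tilde\delta$ lies in the interior of $\mathcal H_1^+(\tilde\gamma)$, so both its endpoints lie in $S^{1+}_\infty(\tilde\gamma)$. The chain machinery is then used only for what it actually gives you --- that $g(\tilde\delta)$ is singular and still contains $\zeta$ --- and a bigon argument finishes: if $\zeta$ were in $\mathcal H_2^-(g(\tilde\gamma))$, then $g(\tilde\delta)$, having both endpoints in $S^{1+}_\infty(\tilde\gamma)$, would have to cross $g(\tilde\gamma)$ twice. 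Your treatment of the singular case by approximating with nonsingular geodesics and using continuity of $g$ matches the paper and is fine, but it rests on the nonsingular case, which as written does not go through.
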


\begin{proof}
First, suppose that $\tilde \gamma \in G_1$ is nonsingular, and fix a $\zeta \in \widetilde \Sigma$.   After orienting $\tilde \gamma$, let us suppose $\mathcal H_1^+(\tilde \gamma)$ is the right half-plane and $\mathcal H^-_1(\tilde \gamma)$ is the left half-plane.
The orientation of $\tilde \gamma$ induces an orientation of $g(\tilde \gamma)$ and hence a choice of right and left half-planes $\mathcal H_2^+(g(\tilde \gamma))$ and $\mathcal H_2^-(g(\tilde \gamma))$.
The endpoints $\partial_{\tilde \varphi_1}(\tilde \gamma) = \partial_{\tilde \varphi_2}(g(\tilde \gamma))$ divide $S^1_\infty$ into two components, $S^{1\pm}_\infty(\tilde \gamma)$,
which are the boundaries at infinity of $\mathcal H_1^\pm(\tilde \gamma)$, as well as that of $\mathcal H_2^\pm(g(\tilde \gamma))$.

We will show $\zeta \in \mathcal H_1^+(\tilde \gamma) \implies \zeta \in \mathcal H_2^+(\tilde \gamma)$.
To prove this, observe that there is a singular geodesic $\tilde \delta \in G_1$ containing $\zeta$ and contained in the interior of $\mathcal H^+_1(\tilde \gamma)$.  Such a geodesic can be obtained by first parallel translating the direction of $\tilde\gamma$ to $\zeta$ along a geodesic segment connecting a point of $\tilde \gamma$ to $\zeta$ (making appropriate choices at any cone points encountered along this geodesic segment), then perturbing slightly to avoid the countably many geodesics through $\zeta$ not in $G_1$.  
Then  $\zeta \in g(\tilde \delta)$.  On the other hand, $\tilde \gamma$ and $g(\tilde \gamma)$ have the same endpoints $\partial_{\tilde \varphi_1}(\tilde \delta) = \partial_{\tilde \varphi_2}(g(\tilde \delta))$, and these lie in $S^{1+}_\infty(\tilde \gamma)$.  If $\zeta \in \mathcal H^-_2(g(\tilde \gamma))$, then $g(\tilde \delta)$ would have to cross $g(\tilde \gamma)$ twice, creating a bigon; see Figure~\ref{bigonpic}.  This contradiction shows that $\zeta \in \mathcal H_2^+(g(\tilde \gamma))$, as required.

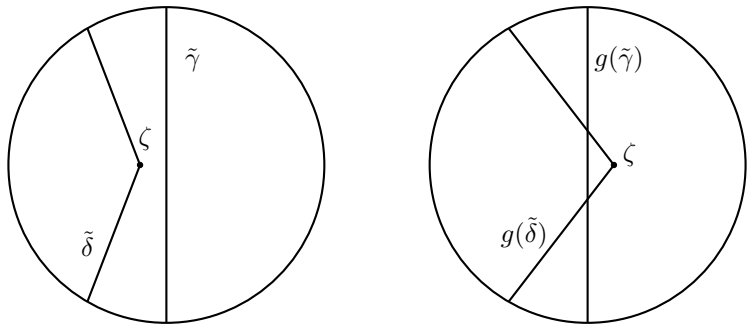
\begin{figure}[h]
\begin{center}
\begin{tikzpicture}[scale = .7]
\draw[thick] (0,0) circle (3);
\draw[thick] (0,3) -- (0,-3);
\draw[thick] ({3*cos(120)},{3*sin(120)}) -- (-.5,0) -- ({3*cos(-120)},{3*sin(-120)});
\node at (.5,2) {$\tilde \gamma$};
\node at (-1.5,-1.5) {$\tilde \delta$};
\node at (-.4,.5) {$\zeta$};
\draw[thick] (8,0) circle (3);
\draw[thick] (8,3) -- (8,-3);
\draw[thick] ({8+3*cos(120)},{3*sin(120)}) -- (8.5,0) -- ({8+3*cos(-120)},{3*sin(-120)});
\node at (8.6,2) {$g(\tilde \gamma)$};
\node at (6.8,-1.3) {$g(\tilde \delta)$};
\node at (8.8,.2) {$\zeta$};
\draw[fill=black] (-.5,0) circle (.05cm);
\draw[fill=black] (8.5,0) circle (.05cm);
\end{tikzpicture}
\caption{Switching sides creates a bigon.}
\label{bigonpic}
\end{center}
\end{figure}

Next, observe that if $\zeta \in \widetilde \Sigma$ is on the left-hand side of a sequence of $\tilde \varphi_i$--geodesics $\{\tilde \gamma_n\} \subset G_i$ (with respect to a choice of orientations) and $\tilde \gamma_n \to \tilde \gamma \in G_i$ (limiting as oriented geodesics), then $\zeta$ is also on the left-hand side of $\tilde \gamma$, for $i=1,2$.  Now suppose that $\zeta \in \widetilde \Sigma$ is any cone point and $\tilde \gamma \in G_1$ is a geodesic for which $\zeta$ is on the left-hand side.  Choose a sequence of nonsingular geodesics $\tilde \gamma_n \in G_1$ limiting to $\tilde \gamma$, such that $\zeta$ is on the left-hand side of $\tilde \gamma_n$ for all $n$: this is easy to do if $\zeta$ is not on $\tilde \gamma$ (then any sequence will have a tail that has this property).  If, on the other hand, $\zeta \in \tilde \gamma$, then it makes angle $\pi$ on the right, and any sequence $\{\tilde \gamma_n\} \subset G_1$ of nonsingular geodesics limiting to $\tilde \gamma$ from the right near $\zeta$ will have $\zeta$ on the left-hand side (to find such a sequence, take any sequence of points approaching $\zeta$ in the interior of $\mathcal H^+_1(\tilde \gamma)$ whose tangent vectors limit to the direction of $\tilde \gamma$ and so that there are nonsingular geodesics through those points which lie in $G_1$).  Now since $g$ is a homeomorphism, it follows that $g(\tilde \gamma_n) \to g(\tilde \gamma)$.  Since $\tilde \gamma_n$ are all nonsingular, $\zeta$ is on the left-hand side of $g(\tilde \gamma_n)$ by the first part of the proof, and hence also on the left-hand side of $g(\tilde \gamma)$.
\end{proof}

%%%%%%%%%%%%%%%%%%%%%%%%%%%%%
\subsection{Identifying triangulations} \label{S:triangulations}
%%%%%%%%%%%%%%%%%%%%%%%%%%%%%

Recall that given $\varphi \in \Flat(S)$, a $\tilde{\varphi}$--Euclidean triangle is a triangle with cone point vertices, 
$\tilde{\varphi}$--saddle connection sides, and  no cone points in its interior. 

\begin{lemma} \label{L:saddle and euclidean preserved}
Every pair $x,y\in \tilde\Sigma$ that determines a saddle connection for $\tilde{\varphi}_1$ also determines 
a saddle connection for $\tilde{\varphi}_2$; this defines a bijection $g_{sc} \colon G_{1,sc} \to G_{2,sc}$ between all $\tilde \varphi_1$--saddle connections and all $\tilde \varphi_2$--saddle connections.  Furthermore, $x,y,z \in \widetilde \Sigma$ are vertices of a positively oriented $\tilde \varphi_1$--Euclidean triangle if and only if they are vertices of a positively oriented $\tilde \varphi_2$--Euclidean triangle. 
\end{lemma}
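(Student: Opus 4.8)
\emph{Overview.} Both assertions are about recovering configurations of cone points from the boundary‑at‑infinity data of basic geodesics, and the engine is the homeomorphism $g\colon G_1\to G_2$ of Proposition~\ref{P:g a homeo} — which already matches the unique cone point of a singular basic geodesic with that of its image — together with the cone point partition Lemma~\ref{geodcomb}. The saddle‑connection statement is the substance; the triangle statement will then follow once saddle connections and the sidedness of cone points are under control.

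\emph{Saddle connections.} Fix a $\tilde\varphi_1$--saddle connection $\sigma$ from $x$ to $y$; by our conventions $x,y\in\widetilde\Sigma$ are cone points of $\tilde\varphi_2$ as well. First realize $\sigma$ inside a basic geodesic: rotating the direction at $x$ toward the direction of $\sigma$ through chain geodesics based at $x$ (all of which lie in $G_1$), one obtains a limit $\tilde\delta_\infty$ in the closed set $\G^*(\tilde\varphi_1)$; it passes through $x$ and $y$, so by Proposition~\ref{P:basic} it is a type (iii) geodesic in $\G^2(\tilde\varphi_1)$, and by uniqueness of geodesics in the CAT(0) space $\tilde S$ the sub-segment of $\tilde\delta_\infty$ between $x$ and $y$ is precisely $\sigma$. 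The same rotation performed at $y$ exhibits $\tilde\delta_\infty$ also as a limit of chain geodesics based at $y$. Now push both families across $g$: by the precompactness argument from the proof of Proposition~\ref{P:g a homeo}, the images converge in $\G^*(\tilde\varphi_2)$ to $\tilde\varphi_2$--geodesics with the same ideal endpoints as $\tilde\delta_\infty$; the limit coming from the $x$--chain passes through $x$, and the one coming from the $y$--chain passes through $y$ (the cone point is carried along by Proposition~\ref{P:g a homeo}). When the $\tilde\varphi_2$--geodesic with these endpoints is unique it is both limits at once, hence passes through $x$ and $y$; otherwise the Flat Strip Theorem~\ref{T:flat strips} turns the family of such geodesics into a cylinder, and convexity of the strip, together with the fact that cone points lie on its boundary, shows directly that the geodesic segment $[x,y]_{\varphi_2}$ lies in it and has no cone point in its interior. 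In all cases $[x,y]_{\varphi_2}$ is a $\tilde\varphi_2$--saddle connection: a cone point in its interior would be a cone point of the ambient basic geodesic lying between $x$ and $y$, and by running the limiting construction backwards through $g^{-1}$ every such cone point matches a cone point of $\tilde\delta_\infty$ in the corresponding position along the geodesic — using that $g$ preserves the order in which a basic geodesic meets cone points, which one extracts from the partitions of Lemma~\ref{geodcomb} along approximating nonsingular geodesics — contradicting that $\sigma$ has no interior cone point. Symmetry gives the inverse construction, so $\{x,y\}\mapsto[x,y]$ defines the asserted bijection $g_{sc}\colon G_{1,sc}\to G_{2,sc}$.

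\emph{Euclidean triangles.} Let $x,y,z$ span a positively oriented $\tilde\varphi_1$--Euclidean triangle $\tilde T_1$ with sides $\sigma_{xy},\sigma_{yz},\sigma_{zx}$. Their $g_{sc}$--images are again saddle connections, and since two saddle connections sharing exactly one endpoint can meet only there (an overlap would, by convexity of geodesic intersections in $\tilde S$, force a cone point into the interior of one of them), the loop $x\to y\to z\to x$ is embedded in $\tilde S$ and bounds a triangle $\tilde T_2$. Its orientation is again positive, because the three ambient geodesics of the sides have literally the same ideal endpoints, with the same cyclic order, and — as shown in the saddle‑connection step — meet $x,y,z$ in the same order. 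Finally, a cone point $w$ lies in the interior of a $\tilde\varphi_i$--Euclidean triangle exactly when, for each side, $w$ lies strictly on the same side of that side's ambient geodesic as the opposite vertex; this follows from convexity of the triangle in $\tilde S$, and it is preserved under the identifications of saddle connections and of cone point partitions. Since $\tilde T_1$ has no interior cone point, neither does $\tilde T_2$, so $\tilde T_2$ is a $\tilde\varphi_2$--Euclidean triangle; the converse is symmetric.

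\emph{The main obstacle.} The heart of the matter is the saddle‑connection statement, specifically the passage from knowing which \emph{side} of a basic geodesic each cone point falls on (Lemma~\ref{geodcomb}) to knowing \emph{which} cone points the geodesic passes through and \emph{in what order}. This is handled by approximating the type (iii) geodesic $\tilde\delta_\infty$ by chain geodesics based at each of its cone points, transporting these families through $g$, and reading off the cone points and their order from the limits — all while keeping the flat‑strip degeneracies, in which the limiting $\tilde\varphi_2$--geodesic need not be unique, under control via Theorem~\ref{T:flat strips}.
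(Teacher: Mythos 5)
Your strategy---embed the saddle connection $\sigma$ in a bi-infinite basic geodesic $\tilde\delta_\infty$, approximate by chain geodesics at each endpoint, transport through $g$, and read off incidences from the limits---is workable up to the point where you must show that the $\tilde\varphi_2$--geodesic from $x$ to $y$ has no cone point in its interior, and there the argument has a genuine gap. Lemma~\ref{geodcomb} only records which \emph{side} of a geodesic in $G_1$ each cone point lies on; it does not, by itself, tell you which cone points a limiting basic geodesic passes \emph{through}, nor in what order. Your appeal to ``$g$ preserves the order in which a basic geodesic meets cone points, which one extracts from the partitions along approximating nonsingular geodesics'' is precisely the statement that needs proof, and the approximants you propose cannot supply it: chain geodesics at $x$ or $y$, and nonsingular geodesics converging to $\tilde\delta_\infty$, are all asymptotic to $\tilde\delta_\infty$ and therefore place $x$, $y$, and any putative interior cone point $z$ on one side or the other without distinguishing their order along the geodesic. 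What does the job is a geodesic in $G_1$ \emph{transverse} to the configuration: the paper takes the $\tilde\varphi_1$--segments from $z$ to $x$ and from $z$ to $y$, chooses subsegments near $z$ disjoint from $\sigma$, and perturbs a geodesic crossing both of them into $G_1$; this separates $z$ from $\{x,y\}$ in $\varphi_1$, hence by Lemma~\ref{geodcomb} also in $\varphi_2$, forcing $g(\tilde\gamma)$ to cross the $\tilde\varphi_2$--geodesic $[x,y]$ twice---a bigon. Once you have this, the entire apparatus of $\tilde\delta_\infty$, chain limits, and flat-strip degeneracies is unnecessary; note also that in the flat-strip case your conclusion fails as stated, since if $x$ and $y$ land on the same boundary component of the strip then $[x,y]_{\varphi_2}$ runs along that boundary, which is a concatenation of saddle connections and may contain interior cone points.

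The triangle step has a second, independent problem. You characterize the interior of a Euclidean triangle as the set of points lying, for each side, on the same side of that side's ambient bi-infinite geodesic as the opposite vertex. That is a necessary condition but not a sufficient one: each vertex has cone angle exceeding $2\pi$ while the triangle's angle there is less than $\pi$, and the extensions of the two incident sides make angle at least $\pi$ with them, so the intersection of the three ``half-planes'' contains extra sectors at the corners lying outside the triangle. A cone point in the interior of $\tilde T_2$ therefore transports to a point satisfying your three conditions relative to $\tilde T_1$ without being forced into the interior of $\tilde T_1$, and no contradiction results. (There is the further issue that these ambient geodesics are type (iii) geodesics, hence lie over $\Omega$, so Lemma~\ref{geodcomb} does not apply to them directly.) The paper instead approximates each side by nonsingular geodesics in $G_2$ pushed slightly toward the interior, so that the three of them jointly separate a putative interior cone point $w$ from $x,y,z$, and pulls this separation back through $g^{-1}$; a similar interval argument on $S^1_\infty$ handles the orientation claim, which in your sketch again leans on the unproved order-preservation statement.
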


\begin{proof} 
To prove the first statement, suppose $x, y$ determines a saddle connection $\delta$ for $\tilde{\varphi}_1$. This means that there is a $\tilde{\varphi_1}$--geodesic segment connecting $x$ and $y$ that does not pass through any other cone points. Now consider the unique $\tilde{\varphi}_2$--geodesic path between $x$ and $y$. Suppose there is a cone point $z$ on this path. We claim that it is possible to separate $z$ from $x$ and $y$ by  a $\tilde{\varphi}_1$--geodesic, $\tilde \gamma \in G_1$ as in Lemma \ref{geodcomb}. Note that this is impossible in the $\tilde{\varphi}_2$ metric, since this would create a geodesic bigon (see Figure \ref{bigon}).

To create the desired separating geodesic, consider the $\tilde\varphi_1$--geodesic segment from $z$ to $x$ and the $\tilde\varphi_1$--geodesic segment from $z$ to $y$. There must be sub-segments of each of these that do not intersect $\delta$. This is because in the $\tilde\varphi_1$ metric, $z$ is not on the saddle connection $\delta$. We denote these sub-segments as $\alpha$ and $\beta$ in Figure \ref{bigon}, below. Any geodesic not passing through $x$, $y$ or $z$ and transversally intersecting both $\alpha$ and $\beta$ will separate $z$ from $x$ and $y$. It is possible to find the required geodesic $\tilde \gamma \in G_1$ by perturbing an arbitrary such $\tilde \varphi_1$--geodesic to one in $G_1$.
This completes the proof of the first statement.

\begin{figure}[h]
\centering
\begin{tikzpicture}
\node (img) at (0,0) {\includegraphics[width=5in]{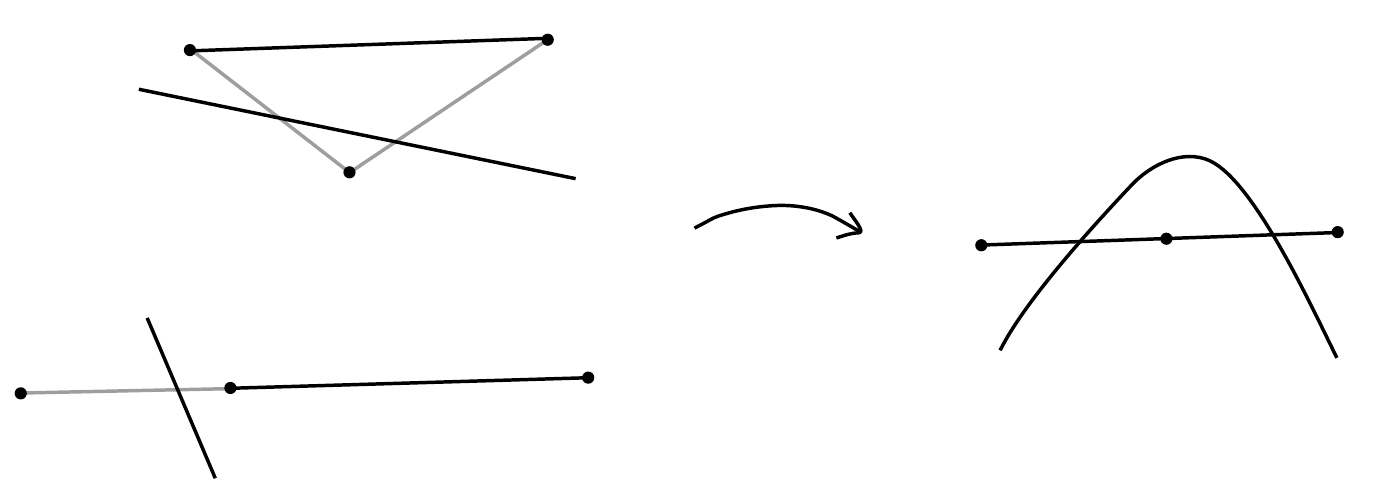}};
\node at (23pt, 18pt) {$g$};
\node at (72pt, -5pt) {$x$};
\node at (175pt, -2pt) {$y$};
\node at (128pt, -4pt) {$z$};
\node at (120pt, 32pt) {$g(\tilde\gamma)$};
\node at (-131pt, 59pt) {$x$};
\node at (-30pt, 61pt) {$y$};
\node at (-88pt, 14pt) {$z$};
\node at (-102pt, 25pt) {$\alpha$};
\node at (-50pt, 39pt) {$\beta$};
\node at (-149pt, 45pt) {$\tilde\gamma$};
\node at (-83pt, 61pt) {$\delta$};
\node at (-118pt, -29pt) {$x$};
\node at (-20pt, -28pt) {$y$};
\node at (-180pt, -32pt) {$z$};
\node at (-152pt, -43pt) {$\alpha=\beta$};
\node at (-145pt, -10pt) {$\tilde\gamma$};
\node at (-68pt, -41pt) {$\delta$};
\node at (-78pt, -8pt) {or};

\end{tikzpicture}
\caption{Failure to preserve saddle connections creates a bigon.}
\label{bigon}
\end{figure}

To prove the second statement, suppose that $x$, $y$, $z$ are the vertices of a $\tilde{\varphi}_1$-Euclidean triangle $\tilde T$. From the above, there are $\tilde{\varphi}_2$--saddle connections forming a complete graph on $x$, $y$, $z$. Further, these saddle connections bound a connected component, which we call $\tilde T'$. We need to show that $\tilde T'$ does not contain any cone points, hence $x$, $y$, $z$ form a $\tilde{\varphi}_2$-Euclidean triangle, and that this triangle $\tilde T'$ has the same orientation as $\tilde T$. 

Let $\alpha$ be the saddle connection between $x$ and $z$. There is a sequence of nonsingular geodesics $\{\tilde \alpha_n\}$ in $G_1$, intersecting $\tilde T'$ which limits to $\alpha$ in $\tilde T$. Let $\beta$ and $\delta$ be the saddle connections between $y$ and $z$ and between $x$ and $z$, respectively. Define similarly sequences $\{\tilde \beta_n\}$ and $\{\tilde \delta_n\}$. For large enough $N$, $\tilde \alpha_N$ defines a partition that separates $z$ from $x$ and $\tilde \delta_N$ a partition that separates $z$ from $y$. Consider the intersection of corresponding two half-planes containing $z$. The boundary of this region determines an interval $I_x$ on $S^1_{\infty}$. Similarly $\alpha _N, \tilde \beta_N$, $z$ and $\tilde \beta_N, \tilde \delta_N, y$ determine intervals $I_y$ and $I_z$ on $S_1$, respectively (see the left-hand side of Figure \ref{F:orientation triangle}).  Give the triangle bounded by $\tilde \alpha_N$, $\tilde \beta_N$, and $\gamma_N$ the same orientation as $\tilde T'$. Note that the orientation of the intervals $I_x, I_y, I_z$ along $S^1_{\infty}$ determine the orientation of the triangle (although we note that the intersection pattern of the geodesics in the $\tilde \varphi_2$--metric is not a priori the same; see the right-hand side of Figure~\ref{F:orientation triangle}). Moreover, the 6 intervals determined by $I_x, I_y, I_z$ and their complements partition $S^1_{\infty}$ in such a way that the $\tilde \varphi_1$--geodesics obtained by straightening $\tilde \alpha_N$ and $\tilde \beta_N$ separate vertex $x$ from the other vertices of $\tilde T$.  Similarly the straightening of $\tilde \alpha_N$ and $\tilde \delta_N$ separate vertex $z$ from $y$ and $x$ and the straightening of $\tilde \beta_N$ and $\tilde \delta_N$ separate vertex $y$ from $z$ and $x$. This results in a triangle whose orientation must agree with both $\tilde T'$ and $\tilde T$ and hence the orientation of $\tilde T$ and $\tilde T'$ must agree. 
 
Finally, if $w$ is a cone point inside $\tilde T'$, we similarly approximate the side of $\tilde T'$ by geodesics $\tilde \alpha_N, \tilde \beta_N, \tilde \delta_N \in G_2$ which together separate this cone point from the cone points $x, y, z$ in the $\tilde{\varphi_2}$-metric, and hence, since $g^{-1}: G_2 \to G_1$ is a homeomorphism, induces such a separation in the $\tilde\varphi_1$-metric as well, contradicting that $\tilde T$ is a $\tilde\varphi$-Euclidean triangle.\end{proof}

\begin{figure}[h]
\begin{center}
\begin{tikzpicture}[scale = 0.8]
\draw (0,0) -- (3,0) -- (1,2) -- (0,0);
\node at (2.4,1) {$\alpha$};
\node at (1.5,-0.4) {$\beta$};
\node at (0.2,1) {$\delta$};
\node at (3.2,-.7) {$\tilde \alpha_n$};
\node at (-1,.5) {$\tilde \beta_n$};
\node at (-.2,-1.3) {$\tilde \delta_n$};
\node at (-0.2,-0.2) {$y$};
\node at (3.3,-0.1) {$x$};
\node at (1,2.2) {$z$};
\node at (1.3,0.5) {$w$};
\draw[fill=black] (0,0) circle (.05cm);
\draw[fill=black] (3,0) circle (.05cm);
\draw[fill=black] (1,2) circle (.05cm);
\draw[fill=black] (1.3,0.7) circle (.05cm);
\node at (-2,-1.5) {$I_y$};
\node at (4.3, -0.5) {$I_x$};
\node at (0.7, 3.3) {$I_z$};
\draw (1,0) circle (3);
\draw[dashed](-2,0.1)--(4,0.1);
\draw[dashed](-1.02,-2.2)--(1.56,2.95);
\draw[dashed](0.03,2.8)--(3.9,-1);
\draw (10,0) circle (3);
\draw[dashed](7,0.1) .. controls(10,2) .. (13,0.1);
\draw[dashed] (7.98,-2.2) .. controls (12,0) .. (10.56,2.95);
\draw[dashed] (9.03,2.8) .. controls (9,-1) .. (12.9,-1);
\node at (7,-1.5) {$I_y$};
\node at (13.3, -0.5) {$I_x$};
\node at (9.7, 3.3) {$I_z$};
\end{tikzpicture}
\caption{{\bf Left-hand side:} Approximating the sides $\alpha, \beta, \gamma$ of $\tilde T'$ with nonsingular geodesics and the resulting intervals $I_x, I_y, I_z$ on $S_1$.  {\bf Right-hand side:} Potential change of intersection pattern.}
\label{F:orientation triangle}
\end{center}
\end{figure}
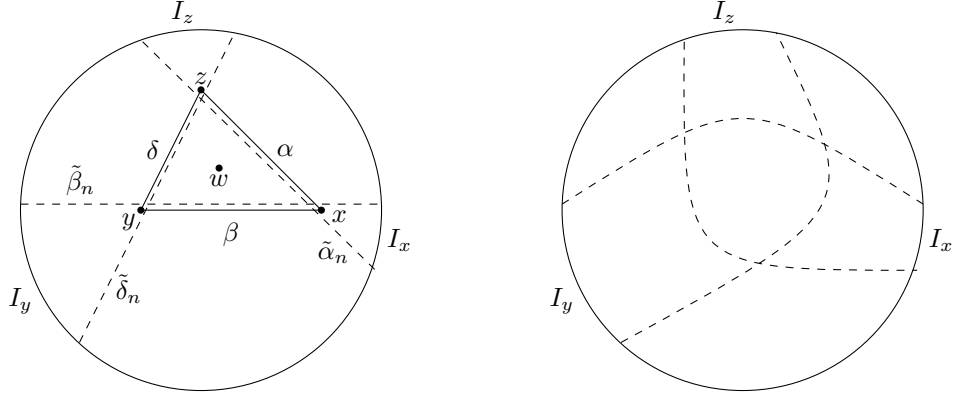

In fact, $g_{sc}$ also defines a bijection between all $\tilde{\varphi_1}$--geodesics that are concatenation of saddle connections and the set of all such $\tilde{\varphi_2}$--geodesic concatenations:

\begin{proposition} \label{P:saddle concatenation} Consider a $\tilde \varphi_1$--geodesic  
 $\tilde \gamma_1 = \cdots \delta_0 \delta_1 \cdots$ which is a 
finite, infinite, or bi-infinite concatenation of saddle connections.
Then $\tilde \gamma_2 = \cdots g_{sc}(\delta_1) g_{sc}(\delta_2) \cdots$
is a $\tilde \varphi_2$--geodesic.  We write $\tilde \gamma_2 = g(\tilde \gamma_1)$.
\end{proposition}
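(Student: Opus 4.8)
The plan is to prove the statement by reducing to the finite case already established in Lemma~\ref{L:saddle and euclidean preserved}, using a local-to-global argument about geodesics together with the homeomorphism $g \colon G_1 \to G_2$ from Proposition~\ref{P:g a homeo}.

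\medskip

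\noindent\textbf{Step 1: The consecutive-pair criterion.} First I would record the elementary fact that a bi-infinite (or finite, or half-infinite) concatenation of saddle connections $\cdots \delta_0 \delta_1 \delta_2 \cdots$ in a flat metric $\tilde \varphi$ is a geodesic if and only if at each shared endpoint $\zeta = \delta_i \cap \delta_{i+1}$, the two saddle connections make an angle at least $\pi$ on each side at $\zeta$ (this is the local geodesic condition through a cone point recalled in \S\ref{S:flat basics}, Figure~\ref{F:local geometry of geodesics}; since $\tilde S$ is CAT(0), the local condition at each cone point is equivalent to being a global geodesic). So it suffices to show: if $\delta_i \delta_{i+1}$ is a geodesic arc in $\tilde \varphi_1$ (makes angle $\geq \pi$ on each side at the shared cone point), then $g_{sc}(\delta_i) g_{sc}(\delta_{i+1})$ is a geodesic arc in $\tilde \varphi_2$.

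\medskip

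\noindent\textbf{Step 2: Reduce the two-saddle-connection case to triangles.} Given a geodesic arc $\delta_i \delta_{i+1}$ through a cone point $\zeta$ in $\tilde \varphi_1$, I would pick a short nonsingular $\tilde \varphi_1$--geodesic $\tilde \gamma$ (in $G_1$, after a perturbation to avoid the countably many bad geodesics) crossing both $\delta_i$ and $\delta_{i+1}$ transversely and close to $\zeta$, cutting off a small $\tilde \varphi_1$--Euclidean triangle $\tilde T$ with vertex $\zeta$ — or, if no single triangle sits there, a bounded chain of $\tilde \varphi_1$--Euclidean triangles fanning around $\zeta$ between $\delta_i$ and $\delta_{i+1}$. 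Since there are no cone points strictly between $\delta_i$ and $\delta_{i+1}$ on the side in question other than $\zeta$ (on the angle-$\pi$-or-more side), the combinatorial data of this fan is finite. By Lemma~\ref{L:saddle and euclidean preserved}, $g_{sc}$ carries each of these triangles to a $\tilde \varphi_2$--Euclidean triangle with the same orientation, and hence carries the fan to a fan of $\tilde \varphi_2$--Euclidean triangles around $f^0(\zeta)$ with the same combinatorial adjacency pattern. The sum of the corresponding angles at $\zeta$ on a given side is therefore preserved — it equals the total angle subtended by the fan of triangles — so the angle condition $\geq \pi$ on each side transfers from $\tilde \varphi_1$ to $\tilde \varphi_2$. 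This gives that $g_{sc}(\delta_i) g_{sc}(\delta_{i+1})$ is locally geodesic at $f^0(\zeta)$.

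\medskip

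\noindent\textbf{Step 3: Assemble and take limits.} Applying Step~2 at every $i$, the concatenation $\tilde \gamma_2 = \cdots g_{sc}(\delta_1) g_{sc}(\delta_2) \cdots$ satisfies the local geodesic condition at each cone point along it, hence is a $\tilde \varphi_2$--geodesic by CAT(0)-ness. To justify writing $\tilde \gamma_2 = g(\tilde \gamma_1)$ consistently with the notation already fixed, I would observe that when $\tilde \gamma_1$ is bi-infinite its endpoints on $S^1_\infty$ are well-defined, and $\tilde \gamma_2$ is a basic geodesic (it lies in $\overline{\G^*(\tilde \varphi_2)}$, being approximable by nonsingular geodesics running alongside the chain of triangles); then by the definition of $g$ together with the chain-compatibility of $f^0$ from Proposition~\ref{P:optimal Omega} and the agreement of cone point partitions (Lemma~\ref{geodcomb}), $\tilde \gamma_1$ and $\tilde \gamma_2$ must have matching endpoints, so $\tilde \gamma_2 = g(\tilde \gamma_1)$ when $\tilde \gamma_1 \in G_1$; in general the two notions of $g$ agree on their common domain and this extends $g$ to all saddle-connection concatenations.

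\medskip

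\noindent\textbf{Main obstacle.} I expect the delicate point to be Step~2: verifying that a fan of $\tilde \varphi_1$--Euclidean triangles around $\zeta$ really does correspond under $g_{sc}$ to a fan of $\tilde \varphi_2$--triangles with the \emph{same} cyclic adjacency (so that the subtended angles genuinely add up), rather than to a configuration whose triangles overlap or wrap differently — the right-hand side of Figure~\ref{F:orientation triangle} shows that intersection patterns of the straightened geodesics need not be preserved naively. Controlling this requires using the orientation statement in Lemma~\ref{L:saddle and euclidean preserved} carefully, and arguing that consecutive triangles in the $\tilde \varphi_1$--fan (sharing a saddle connection emanating from $\zeta$) map to consecutive triangles in $\tilde \varphi_2$ sharing the image saddle connection, so that the images tile a neighborhood of $f^0(\zeta)$ without gaps or overlaps on the relevant side.
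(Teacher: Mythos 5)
Your Step 1 (the local angle-$\ge\pi$ criterion at each shared cone point, plus CAT(0) local-to-global) matches the paper's setup exactly. But Step 2 contains a genuine gap, and it is precisely the step where the paper does something different. You argue that because the fan of $\tilde\varphi_1$--Euclidean triangles between $\delta_i$ and $\delta_{i+1}$ at $\zeta$ maps under $g_{sc}$ to a combinatorially identical fan of $\tilde\varphi_2$--Euclidean triangles, ``the sum of the corresponding angles at $\zeta$ \ldots is therefore preserved.'' That inference is false: Lemma~\ref{L:saddle and euclidean preserved} only guarantees that each triangle maps to a Euclidean triangle with the same vertices and orientation, not that any angle (or sum of angles) is preserved. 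In each metric separately the sum of the fan angles equals the angle between the two saddle connections on that side, so your claim reduces to asserting the very quantity you need to bound --- it gives no lower bound of $\pi$. Indeed, the Support Rigidity Theorem explicitly allows $\varphi_1$ and $\varphi_2$ to differ by a nontrivial affine deformation, under which individual and total fan angles genuinely change (only the property of being $\ge\pi$ survives, because affine maps preserve straightness); so no argument that purports to preserve angle sums at this early stage can be correct. Establishing that angles are actually preserved is the content of the much later holonomy analysis (\S\ref{S:holonomy}), and even there only in the infinite-holonomy case.

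The paper's proof of the two-saddle-connection step is by contradiction via separation and bigons, not via angle bookkeeping: if $g_{sc}(\delta_i)$ and $g_{sc}(\delta_{i+1})$ met at $x$ with angle $<\pi$ on one side, one could cut the corner with a nonsingular $\tilde\varphi_2$--geodesic in $G_2$ whose cone-point partition separates $x$ from the outer endpoints $y$ and $z$; by Lemma~\ref{geodcomb} the corresponding $\tilde\varphi_1$--geodesic induces the same partition, hence must cross the $\tilde\varphi_1$--geodesic $\delta_i\delta_{i+1}$ (which runs from $y$ through $x$ to $z$) twice, creating a geodesic bigon in a CAT(0) space --- a contradiction. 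You should replace Step 2 with this partition/bigon argument; your Steps 1 and 3 can then stand essentially as written (though note that a concatenation of two or more saddle connections has endpoints in $\G^2_{\tilde\varphi_1}\subset\Omega$, so ``$g(\tilde\gamma_1)$'' here is an extension of notation rather than an instance of the map $g$ on $G_1$).
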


\begin{proof}  
Recall that a concatenation of saddle connections is a geodesic segment if and only if the angle between any two consecutive saddle connections measure at least $\pi$ on either side. Let $\tilde \gamma_1 = \cdots \delta_0 \delta_1 \cdots$ be a $\tilde \varphi_1$--geodesic segment which is a concatenation of saddle connections. Then by Lemma \ref{L:saddle and euclidean preserved}, $\tilde \gamma_2= \cdots g_{sc}(\delta_1) g_{sc}(\delta_2) \cdots$ is a concatenation of $\tilde {\varphi_2}$--saddle connections. Suppose $g_{sc}(\delta_i)$ is a saddle connection between cone points $y$ and $x$ and $g_{sc}(\delta_{i+1})$ a saddle connection between $x$ and $z$  and that they meet at $x$ in an angle less than $\pi$ on one side. Then, similar to the proof of Lemma \ref{L:saddle and euclidean preserved}, there is a nonsingular $\tilde{\varphi_2}$--geodesic that induces a partition separating $x$ from $y$ and $z$ (see Figure \ref{F:concatenation}). But since $\delta_i$ and $\delta_{i+1}$ make up a geodesic segment in the $\tilde{\varphi_1}$-metric, this gives us a $\tilde{\varphi_1}$--geodesic bigon, a contradiction. Hence the angle between two consecutive saddle connections must be at least $\pi$ on both sides, and hence $ \tilde \gamma_2$ is a $\tilde \varphi_2$--geodesic segment. 
\end{proof}

\begin{figure}[h]
\begin{center}
\begin{tikzpicture}[scale = 0.8]
\draw (0,0) -- (-1.5,2) -- (0,4);
\draw[fill=black] (0,0) circle (.05cm);
\draw[fill=black] (-1.5,2) circle (.05cm);
\draw[fill=black] (0,4) circle (.05cm);
\draw (-1.2,1.6) arc (2:3.4:30cm);
\draw[dashed] (-0.7,4.5)--(-0.3,-0.4);
\node at (0.3,0) {$y$};76
\node at (-1.8,2) {$x$};
\node at (0.3,4) {$z$};
\node at (-0.9,2) {$\theta$};
\node at (-1,4.3) {$\alpha$};
\end{tikzpicture}
\caption{If the angle $\theta$ between two consecutive saddle connections measures less than $\pi$ there is a nonsingular geodesic $\alpha$ separating $x$ from $y$ and $z$.}
\label{F:concatenation}
\end{center}
\end{figure}
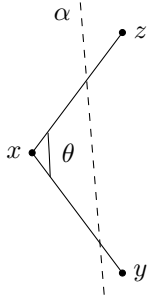

Consider now any $\varphi_1$--triangulation $\mathcal T$ of $S$.  Recall that this means that the vertex set is precisely $\Sigma$ and each triangle is a $\varphi_1$--Euclidean triangle.
We lift to a $\pi_1S$--invariant triangulation $\widetilde{\mathcal T}$ of $\tilde S$.  By Lemma~\ref{L:saddle and euclidean preserved}, we can extend the identity from $\widetilde \Sigma$ to itself to a $\pi_1S$--equivariant map
\[ \tilde f_{\mathcal T} \colon \tilde S \to \tilde S \]
which sends each triangle in $\tilde{\mathcal T}$ to a Euclidean $\tilde \varphi_2$--triangle, and is orientation preserving and affine on the corresponding $2$--simplices.  This map descends to a map $f_{\mathcal T} \colon S \to S$ which is also affine on each $2$--simplex in ${\mathcal T}$.  Since the orientation on each triangle is preserved by $f_{\mathcal T}$, and $f_{\mathcal T}$ has degree $1$, it follows that $f_{\mathcal T}$ is a homeomorphism.  Our goal is to show that, when $\varphi_1$ has holonomy greater than 2, this map is in fact an isometry.  Since the areas of $\varphi_1$ and $\varphi_2$ are both $1$,  if we prove that the interior angles of the triangles in ${\mathcal T}$ are preserved by $f_{\mathcal T}$, then $f_{\mathcal T}$ must be an isometry.  

Adjusting $\varphi_2$ by an isotopy, we can assume that the $\varphi_1$--saddle connections which are the edges of $\mathcal T$ are also $\varphi_2$--saddle connections.  In particular, $\mathcal T$ is also a $\varphi_2$--triangulation.  Moreover, through a further isotopy, we can assume that $f_\mathcal{T}$ and $\tilde{f}_\mathcal{T}$ are the identity on each of the edges in the triangulation. This is convenient to assume throughout the following, and so we make the following additional assumption.
In addition to the conventions adopted at the end of \S\ref{S:convention1} to identify $G_1$ and $G_2$ homeomorphically, 
we further assume that we have fixed a $\varphi_1$--triangulation $\mathcal T$ which is also a $\varphi_2$--triangulation, and let 
$\widetilde{\mathcal T}$ be its lift to $\tilde S$. We also assume that the identity on $(S,\varphi_1) \to (S,\varphi_2)$ is affine on each triangle. 

For each triangle $T$ of $\mathcal T$ and each $i = 1,2$, choose an isometry from $T$ (with its induced path metric) to a Euclidean triangle with respect to $\varphi_i$ (as noted in \S\ref{S:basic}, this may not be well-defined on the vertices).  Using this, we identify the unit tangent space at each point of each triangle (other than the vertices) with $S^1$.  Given $\theta \in S^1$ and a triangle $T$ of $\mathcal T$, we can then unambiguously refer to the $\varphi_i$--direction $\theta$ of $T$.   We let $S^1_i(T)$ denote the $\varphi_i$--directions of $T$.

We do the same for triangles of $\tilde{\mathcal{T}}$ in $\tilde S$, choosing isometries from triangles in $\tilde{\mathcal{T}}$ to Euclidean triangles by composing the covering map $p$ with the isometries of the corresponding triangles in $\mathcal{T}$. Observe that by construction, for any $\tilde T$ in $\widetilde{\mathcal T}$ and $x \in \tilde T$, the derivative $dp_x$ sends $\tilde \varphi_i$--direction $\theta$ to $\varphi_i$--direction $\theta$ in $T = p(\tilde T)$. Thus, for any triangle $\tilde T$ in $\widetilde{\mathcal T}$ with $T = p(\tilde T)$, we use this to identify $S^1_i(T) = S^1_i(\tilde T)$.

With this convention, we are now able to talk about common triangulations and associated directions in a consistent way, for 
example in proving the following corollary to Proposition \ref{P:saddle concatenation}.

\begin{corollary} \label{C:common cylinders}Two currents with the same support have the same cylinder curves: $\cyl(\varphi_1) = \cyl(\varphi_2)$. Moreover, we can homotope any $\varphi_1$-cylinder curve to a $\varphi_2$-cylinder curve in $S\setminus\Sigma$. 
\end{corollary}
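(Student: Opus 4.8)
The plan is to deduce the corollary from Proposition~\ref{P:saddle concatenation}, using the fact recalled in \S\ref{S:basic} that a flat geodesic representative of a closed curve fails to be unique exactly when it is a core curve of a cylinder, together with the flat strip structure of preimages under $\partial_{\tilde\varphi}$ already exploited in \S\ref{S:convention1}. Fix $\gamma\in\cyl(\varphi_1)$ and a maximal $\varphi_1$--cylinder $C_1\subset S$ with core curve homotopic to $\gamma$. Since $S$ has genus at least $2$ it is not itself a cylinder, so each boundary component of $C_1$ contains a cone point and is a closed $\varphi_1$--geodesic that is a periodic concatenation of $\tilde\varphi_1$--saddle connections. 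Lift $C_1$ to a flat strip $\widetilde C_1\cong\R\times[0,W]$ with $W>0$, bounded by two disjoint lines $\widetilde b_1=\cdots\delta_0\delta_1\cdots$ and $\widetilde b_1'$, each a concatenation of $\tilde\varphi_1$--saddle connections, each invariant under a generator $h$ of the stabilizer of $\widetilde C_1$ in $\pi_1 S$ (so $h$ represents the free homotopy class of $\gamma$, up to orientation), and both having the same pair of endpoints $\{p,q\}\subset S^1_\infty$, which is fixed by $h$.

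Next I would transport this picture across $g_{sc}$. By Proposition~\ref{P:saddle concatenation} (whose map $g_{sc}$ is $\pi_1 S$--equivariant, since the cone point identification is), $g(\widetilde b_1)=\cdots g_{sc}(\delta_0)g_{sc}(\delta_1)\cdots$ and $g(\widetilde b_1')$ are $h$--invariant $\tilde\varphi_2$--geodesics. The cone points lying on $g(\widetilde b_1)$ are precisely the consecutive shared endpoints of the $g_{sc}(\delta_i)$, which by Lemma~\ref{L:saddle and euclidean preserved} are the same points of $\widetilde\Sigma$ as the cone points on $\widetilde b_1$, that is, the set $\widetilde b_1\cap\widetilde\Sigma$; likewise the cone points on $g(\widetilde b_1')$ form $\widetilde b_1'\cap\widetilde\Sigma$. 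These two subsets of $\widetilde\Sigma$ are nonempty and disjoint (the two boundary lines of a strip of positive width are disjoint), so $g(\widetilde b_1)\neq g(\widetilde b_1')$. Moreover an $h$--periodic concatenation of saddle connections has forward and backward endpoints on $S^1_\infty$ equal to the limits in $\widetilde S\cup S^1_\infty$ of the cone points it traverses, and for $g(\widetilde b_1)$ these cone points are the very points of $\widetilde b_1\cap\widetilde\Sigma$, which converge to $\{p,q\}$; so $\partial_{\tilde\varphi_2}(g(\widetilde b_1))=\{p,q\}=\partial_{\tilde\varphi_2}(g(\widetilde b_1'))$. Thus $\partial_{\tilde\varphi_2}^{-1}(\{p,q\})$ contains two distinct geodesics, hence (exactly as in \S\ref{S:convention1}) is a closed flat strip of parallel $\tilde\varphi_2$--geodesics (\cite[Theorem II.2.13]{BridHaef}), of positive width, and it is $h$--invariant since $h$ fixes $\{p,q\}$. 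By the Flat Strip Theorem~\ref{T:flat strips} it is contained in a $\varphi_2$--cylinder whose core curves are freely homotopic to $h$, i.e.\ to $\gamma$. Hence $\gamma\in\cyl(\varphi_2)$, and the reverse inclusion follows by the symmetric argument (with $g_{sc}^{-1}$ in place of $g_{sc}$), so $\cyl(\varphi_1)=\cyl(\varphi_2)$.

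For the last statement, let $C_2$ be the maximal $\varphi_2$--cylinder in the class $\gamma$ just produced; its lift is $\widetilde C_2=\partial_{\tilde\varphi_2}^{-1}(\{p,q\})$, and since $g(\widetilde b_1)$ and $g(\widetilde b_1')$ contain cone points while $\mathrm{int}(\widetilde C_2)$ does not, these two lines are the two boundary leaves of $\widetilde C_2$. Thus $\partial\widetilde C_1$ and $\partial\widetilde C_2$ meet $\widetilde\Sigma$ in the same set, with $\widetilde b_1$ and $g(\widetilde b_1)$ (resp.\ $\widetilde b_1'$ and $g(\widetilde b_1')$) sharing their intersections with $\widetilde\Sigma$. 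Approximating $\widetilde b_1$ by nonsingular $\tilde\varphi_1$--geodesics in $G_1$, applying Lemma~\ref{geodcomb} to each, and using that $\mathrm{int}(\widetilde C_1)$ and $\mathrm{int}(\widetilde C_2)$ are cone-point free, I would show that a core curve $\widetilde c_1$ of $\widetilde C_1$ and a core curve $\widetilde c_2$ of $\widetilde C_2$ --- both $h$--invariant, disjoint from $\widetilde\Sigma$, with common endpoints $\{p,q\}$ --- induce the same partition of $\widetilde\Sigma$. Two properly embedded $h$--invariant lines in $\widetilde S$ with a common pair of endpoints inducing the same partition of $\widetilde\Sigma$ are $h$--equivariantly homotopic in $\widetilde S\setminus\widetilde\Sigma$ (the region between them contains no cone point), and this homotopy descends to a free homotopy in $S\setminus\Sigma$ carrying the given $\varphi_1$--cylinder curve to a $\varphi_2$--cylinder curve.

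I expect the main obstacle to be this last paragraph: matching the two partitions of $\widetilde\Sigma$ with consistent choices of side --- which requires keeping careful track of orientations, in the spirit of Lemma~\ref{L:saddle and euclidean preserved}, and choosing the approximating geodesics so that their $g$--images converge to $g(\widetilde b_1)$ rather than to an interior leaf --- so that the equivariant homotopy genuinely avoids the cone points; one also needs the routine structural point that a maximal flat cylinder on a closed surface of genus at least $2$ has a cone point on each of its two boundary components. By contrast, the heart of the statement --- producing from the boundary of a $\varphi_1$--cylinder a positive-width flat strip for $\varphi_2$ --- is immediate from Proposition~\ref{P:saddle concatenation} and the flat strip results already in hand.
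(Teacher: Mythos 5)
Your proposal is correct and follows essentially the same route as the paper: for the first statement, apply the saddle-connection correspondence of Proposition~\ref{P:saddle concatenation} to the two boundary geodesics of the lifted strip, observe that their images are distinct, share endpoints, and are invariant under the same deck transformation, and invoke the Flat Strip Theorem; for the second, approximate by nonsingular geodesics in $G_1$ and use preservation of cone-point partitions (Lemma~\ref{geodcomb}) to see that the two strips separate $\widetilde\Sigma$ identically. The only (cosmetic) divergence is in the last step, where the paper converts ``same partition'' into ``same ordered, oriented edge-crossings of a fixed triangulation,'' which determines the homotopy class in $S\setminus\Sigma$, whereas you build the equivariant homotopy directly from the absence of cone points in the bigons between core curves --- both work.
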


\begin{proof} Lift any $\varphi_1$--cylinder to an infinite strip in $\tilde S$.  The boundary is a pair of bi-infinite geodesics, $\tilde \gamma^\pm$ which are each concatenations of $\tilde \varphi_1$--saddle connections.  Moreover, these remain a bounded Hausdorff distance apart (Hausdorff distance equal to the width of the strip, in fact).  Now observe that $g(\tilde \gamma^\pm)$ is a pair of asymptotic $\tilde \varphi_2$--geodesics which are concatenations of $\tilde \varphi_2$--saddle connections.  These must bound an infinite strip, invariant under a cyclic group (since the same is true of the strip bounded by $\tilde \gamma^\pm$), and hence the quotient is a $\varphi_2$--cylinder whose core curve is homotopic to the core curve of the $\varphi_1$--cylinder.  Thus $\cyl(\varphi_1) \subset \cyl(\varphi_2)$. A symmetric argument proves the reverse containment. 

To see that the cylinder curves are homotopic in $S\setminus\Sigma$ we fix a triangulation $\mathcal{T}$ of $S$. Let $\tilde{\gamma}$ be a $\tilde{\varphi}_1$--geodesic contained in some $\tilde{\varphi}_1$--strip. Note that any $\tilde{\varphi}_2$--geodesic in the corresponding $\tilde{\varphi}_2$--strip defines the same partition of $\tilde{\Sigma}$ as $\tilde{\varphi}_1$. This follows by approximating the $\tilde{\varphi}_1$--geodesic by nonsingular geodesics $\tilde{\gamma}_n$ in $G_1$ and applying $g$ to these. Each $g(\tilde{\gamma}_n)$ define the same partition as $\tilde{\varphi}_1$. By passing to a subsequence,  $g(\tilde{\gamma}_n)$ converges to a $\tilde{\varphi}_2$ geodesic in the $\tilde{\varphi}_2$--strip. However, all $\tilde{\varphi}_2$ geodesics in the $\tilde{\varphi}_2$--strip determine the same partition. Now, take a nonsingular $\varphi_1$-representative $\gamma_1$ of a cylinder curve $\gamma$. The ordered and oriented edges of $\mathcal{T}$ crossed by $\gamma$ determine the homotopy class of $\gamma$ in $S\setminus\Sigma$. However, a nonsingular $\varphi_2$--geodesic representative $\gamma_2$ of the homotopy class of $\gamma$ must cross the same set of ordered and oriented edges, since their lifts $\tilde{\gamma}_1$ and $\tilde{\gamma}_2$ to $\tilde{S}$ determine the same partition of $\tilde\Sigma$ in $\tilde{S}$. Hence $\gamma_1$ and $\gamma_2$ must be homotopic in $S\setminus\Sigma$. 
\end{proof}

%%%%%%%%%%%%%%%%%%%%%%%%%%%%%%%
%%%%%%%%%%%%%%%%%%%%%%%%%%%%%%%
\subsection{Parallelism preserved} \label{S:split strips}
%%%%%%%%%%%%%%%%%%%%%%%%%%%%%%%
%%%%%%%%%%%%%%%%%%%%%%%%%%%%%%%

Suppose $\tilde T$ is any triangle in  $\widetilde{\mathcal T}$, $i \in \{1,2\}$, and $\tilde \gamma \in G_i$ is a geodesic nontrivially intersecting the interior of $\tilde T$.  Define $\G^*(\tilde T,\tilde \gamma)$ to be the closure of the set of geodesics in $\G^*(\tilde \varphi_i)$ intersecting $\tilde T$ in an arc parallel to the arc of intersection $\tilde \gamma \cap \tilde T$.  We say that $\tilde \gamma$ is {\em $\tilde \varphi_i$--generic} for $\tilde T$ if $\G^*(\tilde T,\tilde \gamma) \subset G_i$. 

Before stating the main technical result of this section, we describe the  structure of the set $\G^*(\tilde T,\tilde \gamma)$ for the triangle $\tilde T$, classifying the possibilities for
how these geodesics parallel to $\tilde \gamma$ hit a particular side $\delta$ of each triangle, and allowing us to essentially parametrize the leaves of this foliation by where it hits $\delta$.

\begin{lemma} \label{L:generic triangle foliation} Fix $i \in \{1,2\}$,  a triangle $\tilde T$
in $\widetilde{\mathcal T}$, and  a $\tilde \varphi_i$--generic geodesic $\tilde \gamma \in G_i$.   
Then the intersection of the geodesics of $\G^*(\tilde T,\tilde \gamma)$ with $\tilde T$ defines a foliation by parallel geodesic segments meeting each side of $\tilde T$ transversely.  One side $\delta$ of $\tilde T$ nontrivially intersects every geodesic in $\G^*(\tilde T,\tilde \gamma)$, 
defining a function
\[ h \colon \G^*(\tilde T,\tilde \gamma) \to \delta\]
that sends each geodesic to its point of intersection with $\delta$.  For any $x \in \delta$, the preimage $h^{-1}(x)$ consists of either (1) one nonsingular geodesic, (2) one singular geodesic through one of the two endpoints of $\delta$, or (3) two cone-point asymptotic geodesics meeting $\tilde T$ in the same geodesic arc.
\end{lemma}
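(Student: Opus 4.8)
\emph{Proof plan.} The idea is to work inside the Euclidean triangle $\tilde T$, where $\tilde\varphi_i$--geodesics are straight Euclidean segments, and to exploit the two features packaged in the genericity hypothesis: that $\G^*(\tilde T,\tilde\gamma)$ is a \emph{closure} of geodesics parallel to $\tilde\gamma\cap\tilde T$, and that it is contained in $G_i$, which omits the countable set $\G^2(\tilde\varphi_i)$ of basic geodesics meeting two or more cone points. First I would show the common direction $v$ of the arcs $\tilde\gamma\cap\tilde T$ is parallel to no side of $\tilde T$: if $v$ were parallel to a side $\varepsilon$, the Euclidean chords of $\tilde T$ in direction $v$ would accumulate onto $\varepsilon$, and since $\widetilde\Sigma$ is countable, for all but countably many of these chords the straight-line extension in $(\tilde S,\tilde\varphi_i)$ meets no cone point and is therefore a nonsingular geodesic --- so these chord-geodesics lie in $\G^*(\tilde T,\tilde\gamma)$, and, $\G^*(\tilde\varphi_i)$ being closed, they converge to a geodesic containing the saddle connection $\varepsilon$, which meets the two endpoints of $\varepsilon$, hence lies in $\G^2(\tilde\varphi_i)$ with endpoint pair in $\Omega$, contradicting $\G^*(\tilde T,\tilde\gamma)\subseteq G_i$. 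So $v$ is transverse to all three sides. Labelling the vertices $A,B,C$ so that $B$ lies strictly between $A$ and $C$ in the projection of $\tilde T$ orthogonal to $v$, the chords in direction $v$ foliate $\tilde T$ (the two extreme leaves degenerating to the points $A$ and $C$), each crosses the side $\delta:=AC$ transversely, and $\delta$ is the only side met by every chord --- chords below the level of $B$ run from $AB$ to $\delta$, those above from $BC$ to $\delta$, and the one through $B$ from $B$ to $\delta$. Applying the same ``closed and cocountable'' argument to the continuous map sending a geodesic to the level of its arc in $\tilde T$ shows every such chord is the $\tilde T$--arc of a geodesic in $\G^*(\tilde T,\tilde\gamma)$, giving the foliation assertion; and since the defining set consists of geodesics meeting $\tilde T$ in one nondegenerate parallel arc, the closure adds only the limiting geodesics obtained as the chord level tends to that of $A$, of $C$, or of $B$ from either side, and each such limit is unique, since the approximating chord-geodesics meet the vertex straight, in direction $v$, from the single side on which the chords lie, so they agree on both sides of the vertex in the limit.

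Next I would establish the main structural fact forced by genericity: \emph{for every chord of $\tilde T$ in direction $v$, the geodesic obtained by extending it meets at most one cone point of $\tilde S$} (and for the chord through $B$ that cone point is $B$). Away from cone points the extension is forced, so if it met two cone points then every basic geodesic containing that chord would be determined up to the angle-$\pi$ choices at those cone points, hence would be of type (iii) of Proposition~\ref{P:basic}; but such type-(iii) geodesics occur as limits of the nonsingular chord-geodesics at nearby levels, so they lie in the closure $\G^*(\tilde T,\tilde\gamma)$ but not in $G_i$ --- a contradiction. In particular, by Proposition~\ref{P:basic}, every geodesic in $\G^*(\tilde T,\tilde\gamma)$ is nonsingular or meets exactly one cone point, making angle exactly $\pi$ on one side of it.

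I would then read off the fiber structure of $h$. For $g\in\G^*(\tilde T,\tilde\gamma)$ the set $g\cap\tilde T$ is a single chord (or a point, in the limiting cases through $A$ or $C$), which meets $\delta$ transversely in exactly one point, so $h$ is well defined, and $h^{-1}(x)$ is exactly the set of geodesics of $\G^*(\tilde T,\tilde\gamma)$ whose $\tilde T$--arc is the chord through $x$. If that chord extends to a geodesic meeting no cone point, its extension is nonsingular and is the only such geodesic, because a nonsingular geodesic is determined by any of its subsegments --- case (1). If the extension meets a single cone point $\zeta$ --- which, when $x\notin\{A,C\}$, is not a vertex of $\tilde T$ lying on the chord --- then the only basic extensions are the two making angle $\pi$ on the left, respectively right, at $\zeta$, and both belong to $\G^*(\tilde T,\tilde\gamma)$ as limits of chord-geodesics from the two sides; these two geodesics share the whole subray from $\zeta$ running through $\tilde T$ and beyond, and differ only past $\zeta$, so they are cone-point asymptotic and meet $\tilde T$ in the common arc --- case (3), the subcase $\zeta=B$ occurring exactly when $x$ is where the chord through $B$ meets $\delta$. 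Finally $h^{-1}(A)$ and $h^{-1}(C)$ each consist of the single limiting geodesic through that vertex, which passes through the cone point $A$ (resp.\ $C$) and through the endpoint $x$ of $\delta$ --- case (2).

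I expect the main obstacle to be the disciplined use of the closure under genericity --- especially the ``at most one cone point'' claim and the identification of exactly which geodesics the closure adds (and that each limiting geodesic is unique) --- since these require matching the plane geometry of parallel chords in $\tilde T$ against convergence in $\G(\tilde\varphi_i)$ and invoking Proposition~\ref{P:basic} in just the right places; the underlying Euclidean combinatorics of how parallel chords cross the sides of a triangle is routine.
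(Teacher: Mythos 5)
Your proposal is correct and follows essentially the same route as the paper: transversality is forced because a non-transverse leaf would contain a side of $\tilde T$, hence two cone points, violating genericity; $\delta$ is identified by projecting orthogonal to the foliation direction; and the fiber structure of $h$ comes from the classification of geodesics in $G_i$ (nonsingular ones determined by any subarc, singular ones with exactly one cone point and exactly one cone-point-asymptotic partner sharing a given arc). You simply fill in more of the Euclidean and closure bookkeeping than the paper's terser argument does.
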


\begin{figure}[h]
\begin{center}
\begin{tikzpicture}[scale = 1]
\draw [draw=gray, fill=gray, opacity=.4] (0,0) -- (2,4) -- (4,2);
\draw [ultra thick] (0,0) -- (2,4) -- (4,2) -- (0,0);
\draw (-1,4) .. controls (3,4) .. (5,4.1);
\draw (-1,3.8) .. controls (3,3.8) .. (5,3.9);
\draw (-1,3.6) .. controls (3,3.6) .. (5,3.7);
\draw (-1,3.4) .. controls (3,3.4) .. (5,3.5);
\draw (-1,3.2) .. controls (3,3.2) .. (5,3.3);
\draw (-1,3) .. controls (3,3) .. (5,3.1);
\draw (-1,2.8) .. controls (3,2.8) .. (5,2.9);
\draw (-1,2.6) .. controls (3,2.6) .. (5,2.7);
\draw (-1,2.4) .. controls (3,2.4) .. (5,2.5);
\draw (-1,2.2) .. controls (4,2.2) .. (5,2.3);
\draw (-1,2) -- (4,2) -- (5,2.1);
\draw (4,2) -- (5,1.9);
\draw (-1,1.8) .. controls (4,1.8) .. (5,1.7);
\draw (-1,1.6) .. controls (4,1.6) .. (5,1.5);
\draw (-1,1.4) .. controls (4,1.4) .. (5,1.3);
\draw (-1,1.2) .. controls (4,1.2) .. (5,1.1);
\draw (-1,1) .. controls (4,1) .. (5,.9);
\draw (-1,.8) .. controls (4,.8) .. (5,.7);
\draw (-1,.6) .. controls (4,.6) .. (5,.5);
\draw (-1,.4) .. controls (4,.4) .. (5,.3);
\draw (-1,.2) .. controls (4,.2) .. (5,.1);
\draw (-1,0) .. controls (4,0) .. (5,-.1);
\draw[fill=black] (0,0) circle (.05cm);
\draw[fill=black] (2,4) circle (.05cm);
\draw[fill=black] (4,2) circle (.05cm);
\draw[draw=white, fill=white] (3,3.5) circle (.2cm);
\node at (3,3.5) {$\tilde T$};
\draw[draw=white, fill=white] (.7,2.21) circle (.17cm);
\node at (.7,2.21) {$\delta$};
\end{tikzpicture}
\caption{The foliation of a triangle $\tilde T$ from a $\tilde \varphi_i$--generic geodesic $\tilde \gamma$.  The geodesics on the top and bottom of this figure are the unique geodesics in $\G^*(\tilde T,\tilde \gamma)$ which do not meet the interior of $\tilde T$.  The side $\delta$ meets every geodesic in $\G^*(\tilde T,\tilde \gamma)$.}
\label{F:triangle foliation}
\end{center}
\end{figure}
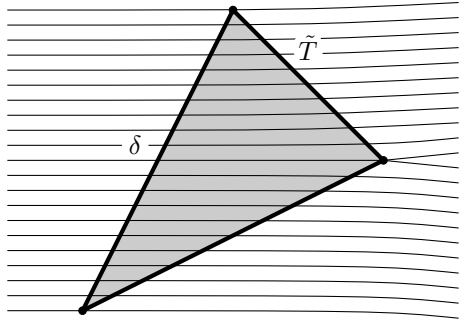

\begin{proof} The  geodesics of $\G^*(\tilde T,\tilde \gamma)$ intersect $\tilde T$ in the leaves of a parallel geodesic foliation by definition.  To see that this foliation meets the sides transversely, we observe that otherwise one of the sides is part of a leaf.  Since such a leaf is a geodesic that contains both endpoints of that side, hence two cone points, it is not in $G_i$, contradicting the $\tilde \varphi_i$--genericity assumption.  

To find the unique side $\delta$ of $\tilde T$ intersecting every geodesic in $\G^*(\tilde T,\tilde \gamma)$, we consider an isometry from $\tilde T$ (with the $\tilde \varphi_i$--metric) to a triangle in $\mathbb R^2$ for which the foliation is horizontal.  The unique side connecting the highest and lowest vertices is (the image of) $\delta$. See Figure~\ref{F:triangle foliation}. 

Finally, recall that all geodesics in $G_i$ are either nonsingular and uniquely determined by any arc contained in it, or singular and contain exactly one cone point.  In the latter case, any arc $\beta$ in a singular geodesic $\tilde \gamma' \in G_i$ not containing the (unique) cone point of $\tilde \gamma'$ is contained in exactly one other geodesic $\tilde \gamma'' \in G_1$, cone-point asymptotic to $\tilde \gamma'$.  These two statements imply the claim about the fibers of $h$.
\end{proof}

With this basic structure in hand, the goal of this section is to prove that the map $g$ preserves the parallel structure described above.

\begin{proposition}[Parallelism is preserved] \label{P:split strips preserved}
Suppose $\tilde T$ is a triangle in $\widetilde{\mathcal T}$ and $\tilde \gamma \in G_1$ is a $\tilde \varphi_1$--generic geodesic meeting $\tilde T$.  Then $g(\tilde \gamma)$ is a $\tilde \varphi_2$--generic geodesic and
\[ \G^*(\tilde T,g(\tilde \gamma)) = g(\G^*(\tilde T,\tilde \gamma)).\]
\end{proposition}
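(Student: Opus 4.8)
I would begin by isolating two facts that cut the proposition down to a single geometric claim. First, a $\tilde\varphi_i$--Euclidean triangle $\tilde T$ is convex in $(\tilde S,\tilde\varphi_i)$: its interior is flat, so a Euclidean chord of $\tilde T$ is a local geodesic, hence the global geodesic between its endpoints by CAT(0); thus every basic geodesic meets $\tilde T$ in a single (possibly degenerate) arc. Second, since the edges of $\widetilde{\mathcal T}$ are saddle connections for \emph{both} metrics, Lemma~\ref{geodcomb} shows that for every $\tilde\gamma'\in G_1$ and every edge $e$ of $\widetilde{\mathcal T}$, $\tilde\gamma'$ crosses $e$ if and only if $g(\tilde\gamma')$ crosses $e$; hence $g(\tilde\gamma')$ meets exactly the same triangles of $\widetilde{\mathcal T}$, in the same order, as $\tilde\gamma'$, and $g(\tilde\gamma')\cap\tilde T$ joins the same pair of sides of $\tilde T$ as $\tilde\gamma'\cap\tilde T$. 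Consequently the arcs $\{\,g(\tilde\gamma')\cap\tilde T : \tilde\gamma'\in\G^*(\tilde T,\tilde\gamma)\,\}$ are pairwise disjoint in $\tilde T$ (disjointness and the nesting order transported through the homeomorphism $g$ of Proposition~\ref{P:g a homeo}), each meets the distinguished side $\delta$ once, and the two extreme members pass through the endpoints $a,b$ of $\delta$ (as $g$ preserves the cone point of a singular geodesic). \emph{Granting} that all these arcs are mutually parallel in the $\tilde\varphi_2$--triangle $\tilde T$, they form a parallel foliation of $\tilde T$ containing $g(\tilde\gamma)\cap\tilde T$; since $g$ is a homeomorphism, $\G^*(\tilde T,\tilde\gamma)$ is the closure of its nonsingular members, and $g$ carries nonsingular geodesics to nonsingular geodesics (Proposition~\ref{P:g a homeo}), $g(\G^*(\tilde T,\tilde\gamma))$ is closed, and a density/monotonicity argument (the $G_2$--geodesics of a fixed $\tilde T$--direction are co-countable among all of them, hence dense, and the induced map on directions is monotone and injective) upgrades this to $g(\G^*(\tilde T,\tilde\gamma)) = \G^*(\tilde T,g(\tilde\gamma))$, whence $\G^*(\tilde T,g(\tilde\gamma))\subseteq g(G_1)=G_2$, i.e.\ $g(\tilde\gamma)$ is $\tilde\varphi_2$--generic.

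\textbf{Proving parallelism.}
This is the heart of the argument. I would fix a nonsingular leaf $\tilde\gamma_0\in\G^*(\tilde T,\tilde\gamma)$, show that $g(\tilde\gamma_0)\cap\tilde T$ has the same direction as $g(\tilde\gamma)\cap\tilde T$, and then propagate the statement using that $\G^*(\tilde T,\tilde\gamma)$ is connected --- it is parametrized by $\delta$ through the map $h$ of Lemma~\ref{L:generic triangle foliation}, with at most one fibre of size two. For the direction comparison I would run a \emph{development} argument: take nonsingular leaves $\tilde\gamma_n\to\tilde\gamma$; parametrizing by arclength, they converge uniformly on compacta, so one may choose $\ell_n\to\infty$ with $\tilde\gamma_n$ staying within $\epsilon_n\to0$ of $\tilde\gamma$ along a central sub-segment of $\tilde\varphi_1$--length $\ell_n$. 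Applying $g$ and using continuity, $g(\tilde\gamma_n)$ fellow-travels $g(\tilde\gamma)$ over $\tilde\varphi_2$--length $\to\infty$; by the edge-crossing fact above both develop, along this stretch, into a common chain of $\widetilde{\mathcal T}$--triangles of uniformly bounded diameter (uniform because there are finitely many triangles modulo $\pi_1S$), forcing the two developed straight lines to make an angle tending to $0$. One then has to combine this with the fact that the \emph{entire} parallel subfamily lying between two nearby leaves develops consistently --- via straightness across edges and the cone point partitions of Lemma~\ref{geodcomb}, which pin down how each leaf sits relative to the three vertices of $\tilde T$ --- to conclude that the $\tilde\varphi_2$--direction of $g(\tilde\gamma')\cap\tilde T$ is \emph{locally constant}, not merely continuous, along $\G^*(\tilde T,\tilde\gamma)$; connectedness then finishes it.

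\textbf{The main obstacle.}
The subtle point, and the one I expect to require the most work, is precisely this last inference. Because $\varphi_2$ has nontrivial holonomy in general, two distinct parallel-in-$\tilde T$ geodesics eventually diverge once they wind around the surface --- so one cannot fellow-travel a fixed pair of leaves forever, and the naive development estimate yields only continuity of the direction, not constancy. Closing the gap means genuinely exploiting the \emph{family} structure: between any two leaves lies a continuum of leaves whose developments into $\tilde\varphi_2$ are forced, by straightness across edges and the preserved cone point partitions, to remain mutually parallel over long stretches, and one must also control the behaviour near the (at most one) cone point on a singular leaf, where the edge-crossing pattern of an approximating nonsingular leaf differs from that of the leaf itself, but only inside a bounded region that does not spoil the long, thin development. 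A secondary nuisance is ruling out that the chain of crossed triangles, though of bounded diameter, has bounded total length; this is handled by compactness together with the genericity of $\tilde\gamma$, which keeps nearby leaves uniformly transverse to $\widetilde{\mathcal T}$ away from cone points.
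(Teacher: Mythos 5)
Your reduction to the single claim that the arcs $g(\tilde\gamma')\cap\tilde T$, $\tilde\gamma'\in\G^*(\tilde T,\tilde\gamma)$, are mutually parallel matches the structure of the paper's proof, and your observation that $g$ preserves edge crossings and the distinguished side $\delta$ is correct. But the heart of your argument has a genuine gap, and you have located it yourself without closing it. Your development/fellow-travelling argument only yields \emph{continuity} of the $\tilde\varphi_2$--direction as a function of the leaf, and your proposed upgrade to local constancy is circular: you assert that the developments of the intermediate leaves ``are forced \ldots to remain mutually parallel over long stretches,'' which is precisely the statement to be proved. Concretely, the argument you sketch does work on any subfamily of leaves with no singular leaf between them (the region between two such leaves is flat, simply connected, with geodesic boundary, hence develops onto a strip in $\R^2$ bounded by two disjoint, therefore parallel, lines), but it says nothing about what happens as you cross a cone-point-asymptotic pair $\{\tilde\gamma_0,\tilde\gamma_1\}$ in the family: there the region between two leaves on opposite sides of the pair contains a cone point, the naive developing map is no longer single-valued, and the direction could a priori jump. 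Since these singular leaves can accumulate, ``locally constant away from countably many points'' plus continuity does not finish the argument either without further work.

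The paper's proof supplies exactly the missing idea, and it is global rather than perturbative. One forms the region $Z_2$ swept out by the image family as an intersection of two half-planes (bounded by $g(\tilde\gamma_\alpha)$, $g(\tilde\gamma_\omega)$) with the slit spaces $\mathcal S(g(\tilde\gamma_0),g(\tilde\gamma_1))$ over all cone-point-asymptotic pairs in the family. Preservation of cone point partitions (Lemma~\ref{geodcomb} and Lemma~\ref{L:V-half-planes partition}) shows $Z_2$ has no interior cone points and that each slit boundary component carries a single cone point of interior angle $2\pi$. Isometrically identifying the two rays of each slit (``zipping up'') produces a \emph{nonsingular}, simply connected, complete Euclidean surface $\hat Z_2$ with geodesic boundary, whose developing map is therefore an isometric embedding into $\R^2$. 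Now if $g(\tilde\gamma)$ and $g(\tilde\gamma')$ met $\delta$ at different angles, their images in $\R^2$ would be two straight lines meeting a common segment at different angles, hence would intersect, contradicting their disjointness (which survives the quotient). This bypasses holonomy and fellow-travelling entirely; it is the step your proposal needs and does not provide.
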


We will need the following construction in the proof of this proposition.  Suppose $\tilde \gamma_0,\tilde \gamma_1 \in G_i$ are a pair of cone-point asymptotic geodesics.  For each $j=0,1$, we write $\tilde\gamma_j = \tilde \gamma_j^- \cup \tilde \gamma_j^+$ as a union of rays based at the (unique) cone point $\zeta$ in $\tilde \gamma_j$, 
so that $\tilde \gamma_0^+ = \tilde \gamma_1^+$.  The union of the negative rays $\tilde \gamma_0^- \cup \tilde \gamma_1^-$ bounds a closed {\em slit space} containing $\tilde \gamma_0^+ = \tilde \gamma_1^+$ which we denote $\mathcal S(\tilde \gamma_0,\tilde \gamma_1)$;  see Figure~\ref{F:V-half-plane}.   The boundary of $\mathcal S(\tilde \gamma_0,\tilde \gamma_1)$ is precisely $\tilde \gamma_0^- \cup \tilde \gamma_1^-$ and contains the single cone point $\zeta$, making an interior angle of $2 \pi$.

\begin{figure}[h]
\begin{center}
\begin{tikzpicture}[scale = .95]
\draw [draw=white, fill=gray, opacity=0.2] (-2,1) -- (2,0) -- (14,0) -- (14,3) -- (-2,3) -- (-2,1);
\draw [draw=white, fill=gray, opacity=0.2] (-2,-1) -- (2,0) -- (14,0) -- (14,-3) -- (-2,-3) -- (-2,-1);
\draw (-2,1) -- (2,0) -- (14,0);
\draw (-2,-1) -- (2,0);
\draw[fill=black] (2,0) circle (.08cm);
\draw [domain=-165:165] plot ({2+.2*cos(\x)}, {.2*sin(\x)});
\node at (2.2,.5) {$2 \pi$};
\node at (1.1,0) {\small $\zeta$};
\node at (1,.8) {$\tilde \gamma_0$};
\node at (1,-.9) {$\tilde \gamma_1$};
\node at (-1.2,.5) {$\tilde \gamma_0^-$};
\node at (-1.4,-.5) {$\tilde \gamma_1^-$};
\node at (9,.4) {$\tilde \gamma_0^+ = \tilde \gamma_1^+$};
\node at (13,1) {$\mathcal S(\tilde \gamma_0,\tilde \gamma_1)$};
\node at (6,2) {$H_0$};
\node at (6,-2) {$H_1$};
\end{tikzpicture}
\caption{The "slit space" $\mathcal S(\tilde \gamma_0,\tilde \gamma_1)$ is a union of half-planes $H_0$ and $H_1$ bounded by geodesics $\tilde \gamma_0$ and $\tilde \gamma_1$, respectively, along their maximal common sub-ray $\tilde \gamma_0^+ = \tilde \gamma_1^+$ based at the cone point $\zeta$.}
\label{F:V-half-plane}
\end{center}
\end{figure}
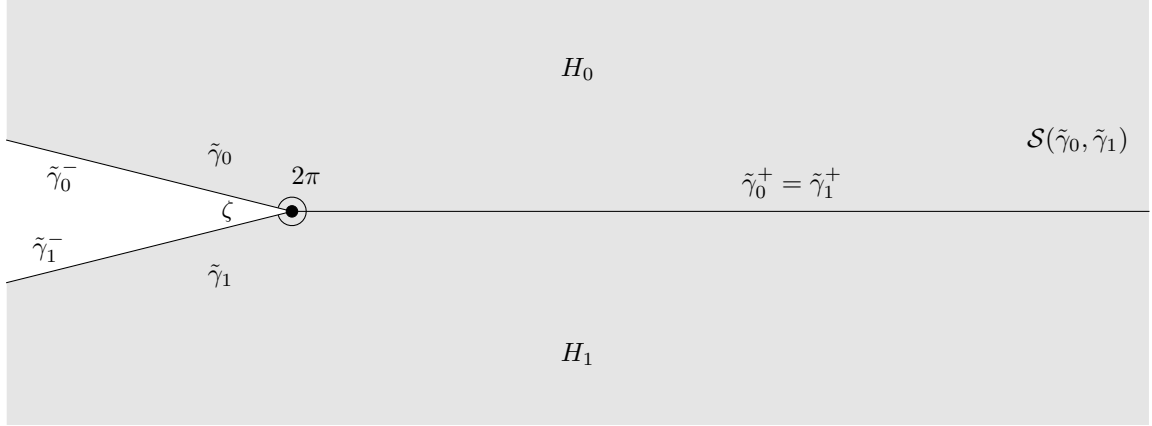

Next we observe that these slit spaces hit the cone points in the same way as their $g$--images.

\begin{lemma} \label{L:V-half-planes partition} If $\tilde \gamma_0,\tilde \gamma_1 \in G_1$ are cone-point asymptotic geodesics, then so are $g(\tilde \gamma_0),g(\tilde \gamma_1)$ and
\[ \mathcal S(\tilde \gamma_0,\tilde \gamma_1) \cap \tilde \Sigma = \mathcal S(g(\tilde \gamma_0),g(\tilde \gamma_1)) \cap \tilde \Sigma.\]
\end{lemma}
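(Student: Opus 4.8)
The plan is to reduce the statement to Lemma~\ref{geodcomb} (cone point partitions are well-defined), after first making sense of the right-hand side and giving an explicit description of slit spaces as unions of half-planes. I would proceed in three steps.

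\emph{Step 1: the images are cone-point asymptotic.} Since $\tilde\gamma_0,\tilde\gamma_1\in G_1$ are cone-point asymptotic, they share a maximal subray emanating from a cone point $\zeta$, which is then the unique cone point of each of $\tilde\gamma_0$ and $\tilde\gamma_1$; write $\partial_{\tilde\varphi_1}(\tilde\gamma_j)=\{x_j,y\}$ with $y$ the endpoint of the common ray, and note $x_0\ne x_1$ since $\partial_{\tilde\varphi_1}$ is injective on $G_1$. By Proposition~\ref{P:g a homeo} each $g(\tilde\gamma_j)$ is singular with the same unique cone point $\zeta$, and by construction $\partial_{\tilde\varphi_2}(g(\tilde\gamma_j))=\{x_j,y\}$, so $g(\tilde\gamma_0)$ and $g(\tilde\gamma_1)$ are asymptotic in the $y$--direction. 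By the dichotomy recalled in \S\ref{S:basic} they either share a maximal subray or bound a flat half-strip of positive width; I would rule out the latter by the Flat Strip Theorem~\ref{T:flat strips} just as in \S\ref{S:convention1}: such a half-strip lies in a maximal cylinder, and the boundary ray of the half-strip inside $g(\tilde\gamma_j)$ is then a geodesic ray of that flat cylinder, hence parallel to the core; if it lies in the interior, $g(\tilde\gamma_j)$ is forced to be a lift of a core curve, a nonsingular geodesic, contradicting $\zeta\in g(\tilde\gamma_j)$; if it lies on the cylinder boundary, $g(\tilde\gamma_j)$ meets infinitely many cone points, contradicting $g(\tilde\gamma_j)\in G_2$. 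Hence $g(\tilde\gamma_0),g(\tilde\gamma_1)$ share a maximal subray, necessarily from a cone point, which must be $\zeta$; so they are cone-point asymptotic and $\mathcal S(g(\tilde\gamma_0),g(\tilde\gamma_1))$ is defined.

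\emph{Step 2: slit spaces are unions of distinguished half-planes.} For any cone-point asymptotic pair $\tilde\delta_0,\tilde\delta_1\in G_i$ with common cone point $\zeta$, I claim $\mathcal S(\tilde\delta_0,\tilde\delta_1)=H_0\cup H_1$, where $H_j$ is the closed half-plane bounded by $\tilde\delta_j$ on the side along which $\tilde\delta_j$ makes angle exactly $\pi$ at $\zeta$. This is essentially the content of the definition and Figure~\ref{F:V-half-plane}, and it is confirmed by angle bookkeeping at $\zeta$: the three rays $\tilde\delta_0^-$, $\tilde\delta_0^+=\tilde\delta_1^+$, $\tilde\delta_1^-$ cut the cone of angle $\Theta>2\pi$ into sectors whose only sizes consistent with each $\tilde\delta_j$ making angle $\pi$ on exactly one side are $\pi,\pi,\Theta-2\pi$, the two $\pi$--sectors lying in $H_0$ and $H_1$ respectively, with $H_0\cap H_1=\tilde\delta_0^+$. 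Since $\zeta$ is the only cone point on $\partial H_j=\tilde\delta_j$, this yields
\[ \mathcal S(\tilde\delta_0,\tilde\delta_1)\cap\widetilde\Sigma \;=\; \{\zeta\}\cup A(\tilde\delta_0)\cup A(\tilde\delta_1), \]
where $A(\tilde\delta_j)$ denotes the partition class of $\widetilde\Sigma$ determined by $\tilde\delta_j$ (in the sense of \S\ref{S:partitions}) that does \emph{not} contain $\zeta$.

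\emph{Step 3: conclusion.} By Lemma~\ref{geodcomb}, $\tilde\gamma_j$ and $g(\tilde\gamma_j)$ determine the same partition of $\widetilde\Sigma$; since $\zeta$ is the unique cone point of each, it lies in the distinguished ($\zeta$--containing) class for both, so the complementary classes agree as well, i.e.\ $A(\tilde\gamma_j)=A(g(\tilde\gamma_j))$ for $j=0,1$. Applying the displayed identity of Step 2 to the pair $(\tilde\gamma_0,\tilde\gamma_1)$ and to the pair $(g(\tilde\gamma_0),g(\tilde\gamma_1))$, both having common cone point $\zeta$, gives $\mathcal S(\tilde\gamma_0,\tilde\gamma_1)\cap\widetilde\Sigma=\{\zeta\}\cup A(\tilde\gamma_0)\cup A(\tilde\gamma_1)=\{\zeta\}\cup A(g(\tilde\gamma_0))\cup A(g(\tilde\gamma_1))=\mathcal S(g(\tilde\gamma_0),g(\tilde\gamma_1))\cap\widetilde\Sigma$, as required. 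I expect the main obstacle to be Step 1 together with the bookkeeping in Step 2 — pinning down that the two half-planes forming $\mathcal S$ are precisely the "angle-$\pi$" half-planes, and ruling out the flat-half-strip alternative for the $g$--images so that the right-hand side is even defined. Once these geometric set-up points are in place, Step 3 is an immediate application of Lemma~\ref{geodcomb}.
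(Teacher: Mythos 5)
Your proof is correct and follows essentially the same route as the paper: decompose each slit space as a union of two closed half-planes bounded by the singular geodesics and then invoke Lemma~\ref{geodcomb} to match the cone points in each half-plane. The paper's own proof is just a terser version of your Steps 2--3, leaving implicit the facts you spell out in Step 1 (that $g(\tilde\gamma_0),g(\tilde\gamma_1)$ are again cone-point asymptotic, so the right-hand slit space is defined) and the angle bookkeeping identifying the two half-planes; your added detail is accurate and harmless.
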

\begin{proof} As already noted $\mathcal S(\tilde \gamma_0,\tilde \gamma_1)$ is a union of closed half-planes $H_0$ and $H_1$ bounded by $\tilde \gamma_0$ and $\tilde \gamma_1$, respectively;  see Figure~\ref{F:V-half-plane}.  We have
\[ \mathcal S(\tilde \gamma_0,\tilde \gamma_1) \cap \tilde \Sigma =( H_0 \cap \tilde \Sigma) \cup ( H_1 \cap \tilde \Sigma).\]
Similarly, $\mathcal S(g(\tilde \gamma_0),g(\tilde \gamma_1)) = H_0' \cup H_1'$, where $H_0',H_1'$ are closed half-planes bounded by $g(\tilde \gamma_0)$ and $g(\tilde \gamma_1)$, respectively.  
Since cone-point partitions are preserved by $g$  (Lemma~\ref{geodcomb}), we have
\[ H_0' \cap \tilde \Sigma = H_0 \cap \tilde \Sigma \mbox{ and } H_1' \cap \tilde \Sigma = H_1 \cap \tilde \Sigma,\]
and the lemma follows.
\end{proof}

\begin{proof}[Proof of Proposition~\ref{P:split strips preserved}.]   From our  basic observations about how the foliation meets each triangle
(Lemma~\ref{L:generic triangle foliation}), there is a unique side $\delta$ of the triangle $\tilde T$ that meets all the geodesics in $\G^*(\tilde T,\tilde \gamma)$.  For all $\tilde \gamma' \in \G^*(\tilde T,\tilde \gamma)$, the two endpoints of $\delta$ (which are cone points) lie in different subsets of the partition of 
$\widetilde \Sigma$ determined by $\tilde \gamma'$, so since the partition is preserved (Lemma~\ref{geodcomb}), the same is true for $g(\tilde \gamma')$.  Since $\delta$ 
is also a $\tilde \varphi_2$--geodesic (by our convention identifying geodesics), this property of the partitions implies $g(\tilde \gamma') \cap \delta \neq \emptyset$.  

Let $\tilde \gamma_\alpha,\tilde \gamma_\omega$ denote the geodesics of $\G^*(\tilde T,\tilde \gamma)$ intersecting the endpoints of $\delta$ (i.e., the unique pair of geodesics in $\G^*(\tilde T,\tilde \gamma)$ that are disjoint from the interior of $\tilde T$) and set $\G^*_0(\tilde T,\tilde \gamma) = \G^*(\tilde T,\tilde \gamma) \setminus \{\tilde \gamma_\alpha,\tilde \gamma_\omega\}$.  Note that $g(\G^*_0(\tilde T,\tilde \gamma))$ consists of geodesics intersecting the interior of $\tilde T$ (and so also the interior of $\delta$).

\begin{claim} For all $\tilde \gamma' \in \G^*_0(\tilde T,\tilde \gamma)$, the images 
$g(\tilde \gamma)$ and $g(\tilde \gamma')$ intersect $\tilde T$ in parallel arcs.  
\end{claim}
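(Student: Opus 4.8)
The plan is to show that the map $g$ carries the parallel foliation $\G^*(\tilde T,\tilde\gamma)$ of $\tilde T$ (from Lemma~\ref{L:generic triangle foliation}) to a foliation of $\tilde T$ by parallel $\tilde\varphi_2$--geodesic arcs; the Claim then amounts to the statement that $g(\tilde\gamma)$ and $g(\tilde\gamma')$ both lie in this parallel foliation. First I would reduce to the case where $\tilde\gamma$ and $\tilde\gamma'$ are both nonsingular: the nonsingular leaves are dense in $\G^*(\tilde T,\tilde\gamma)$, they lie in $\G^*_0(\tilde T,\tilde\gamma)$, and they have nonsingular $g$--images by Proposition~\ref{P:g a homeo}; since ``meeting $\tilde T$ in parallel arcs'' is a closed condition on pairs of geodesics and $g$ is continuous, it suffices to treat nonsingular $\tilde\gamma,\tilde\gamma'$.

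Next comes the non--crossing step. Two distinct nonsingular leaves of the foliation are parallel chords of the Euclidean triangle $\tilde T$, hence disjoint inside $\tilde T$, and in fact globally disjoint in $\tilde S$: if they met one could splice the two chords together with a sub-arc of $\partial\tilde T$ into a geodesic bigon, contradicting uniqueness of geodesics in the $\mathrm{CAT}(0)$ space $(\tilde S,\tilde\varphi_1)$ (parallel Euclidean lines cannot reconverge without a cone point lying strictly between them, and the cone--angle constraints defining basic geodesics rule that out). Globally disjoint geodesics have non-linking endpoints, and since $g$ preserves endpoints, $g(\tilde\gamma)$ and $g(\tilde\gamma')$ also have non-linking endpoints; being distinct nonsingular $\tilde\varphi_2$--geodesics both meeting the interior of $\tilde T$ (as noted just before the Claim), they must be disjoint chords of $\tilde T$, bounding a subregion $Q'\subset\tilde T$. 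A betweenness argument using Lemma~\ref{geodcomb} then shows that if $\tilde\gamma''$ is a leaf of the $\tilde\varphi_1$--foliation lying between $\tilde\gamma$ and $\tilde\gamma'$ in $\tilde T$, then $\tilde\gamma''$ globally separates $\tilde\gamma$ from $\tilde\gamma'$, so its cone--point partition is ``between'' those of $\tilde\gamma,\tilde\gamma'$; as $g$ preserves cone--point partitions, $g(\tilde\gamma'')\cap\tilde T\subset Q'$. Letting $\tilde\gamma''$ sweep through the continuum of intermediate leaves and using continuity and injectivity of $g$, the $g$--images of these leaves foliate $Q'$ by pairwise disjoint geodesic chords.

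The crux is then to upgrade ``$Q'$ is foliated by disjoint $\tilde\varphi_2$--geodesic chords whose endpoint data matches the $\tilde\varphi_1$--leaves'' to ``$g(\tilde\gamma)\parallel g(\tilde\gamma')$ in $\tilde T$''. The subtlety is that a foliation of a triangle by disjoint chords need not be a parallel foliation (a pencil of lines through an exterior point is a counterexample), so the coarse invariant of cone--point partitions alone is insufficient. Here I would bring in the distinguished side $\delta$ and the slit-space Lemma~\ref{L:V-half-planes partition}: the extreme leaves $\tilde\gamma_\alpha,\tilde\gamma_\omega$ pass through the two endpoints $a,b$ of $\delta$, and for generic $\tilde\gamma$ they occur inside cone--point--asymptotic pairs whose slit spaces $\mathcal S$ meet $\tilde\Sigma$ in the same way as their $g$--images; this pins down the ``incoming direction at $a$ and at $b$'' of the $\tilde\varphi_2$--foliation, forcing $a,b$ to be the extreme vertices of $\tilde T$ in the $\tilde\varphi_2$--direction of $g(\tilde\gamma)$, hence that $g(\tilde\gamma)$ is $\tilde\varphi_2$--generic and the $g$--images run parallel across $\tilde T$ from $\delta$. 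Equivalently, were $g(\tilde\gamma)$ and $g(\tilde\gamma')$ to fan out from an exterior point, one of the countably many singular leaves through a cone point adjacent to $\tilde T$ would be forced onto the wrong side of $g(\tilde\gamma)$, contradicting Lemma~\ref{geodcomb}. I expect this last step --- ruling out the fanning configuration and genuinely producing parallelism rather than mere non-crossing --- to be the main obstacle, and to be where the slit-space machinery and the structure of the distinguished side $\delta$ do the real work.
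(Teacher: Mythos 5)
Your reduction to nonsingular leaves (density plus continuity of $g$ from Proposition~\ref{P:g a homeo}, and closedness of the parallelism condition) and your non-crossing step are fine, and you correctly diagnose that the whole difficulty is upgrading ``disjoint chords of $\tilde T$'' to ``parallel chords of $\tilde T$.'' But your resolution of that step is not a proof, and the mechanism you propose would not work. Cone-point partitions and the slit-space data of Lemma~\ref{L:V-half-planes partition} are discrete invariants: they record on which side of a geodesic each cone point lies, and nothing more. A fanning configuration --- two disjoint $\tilde\varphi_2$--chords of $\tilde T$ meeting $\delta$ at slightly different angles --- is perfectly consistent with every cone-point partition and every slit-space intersection with $\tilde\Sigma$ being preserved; no singular leaf is ``forced onto the wrong side,'' because which side a cone point lies on does not see an angle. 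You acknowledge earlier in your own write-up that partitions alone are insufficient, and then the final step quietly falls back on exactly that kind of data. The assertion that the slit spaces ``pin down the incoming direction at $a$ and at $b$'' is the entire content of the Claim and is left unsubstantiated.

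The paper's actual argument is global rather than local to $\tilde T$. It takes the region $Z_2$ swept out by the full bi-infinite geodesics $g(\mathfrak g)$, expressed as an intersection of half-planes and slit spaces, and \emph{zips up the slits}: identifying the boundary rays $g(\tilde\gamma_0)^-$ with $g(\tilde\gamma_1)^-$ for each cone-point asymptotic pair produces a simply connected, \emph{nonsingular}, complete Euclidean surface $\hat Z_2$ with geodesic boundary, whose developing map is an isometric embedding into $\mathbb R^2$. There the images of $g(\tilde\gamma)$ and $g(\tilde\gamma')$ become genuine bi-infinite straight lines; if they met $\Pi(\delta)$ at different angles they would have to cross somewhere in the plane, contradicting disjointness. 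The contradiction thus comes from following the geodesics out to infinity in a developed planar picture, not from anything visible near $\tilde T$ or from the combinatorics of cone points. This zipping/developing construction is the missing idea in your proposal, and without it the fanning configuration is not excluded.
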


Let us first establish that the proposition follows from this claim.  Observe that the claim implies $g(\G^*_0(\tilde T,\tilde \gamma)) \subset \G^*(\tilde T,g(\tilde \gamma))$, and since $g$ is continuous, we can extend this to both $\tilde \gamma_\alpha$ and $\tilde \gamma_\omega$, and therefore $g(\G^*(\tilde T,\tilde \gamma)) \subset \G^*(\tilde T,g(\tilde \gamma))$.
Now suppose $\tilde \gamma'' \in \G^*(\tilde T,g(\tilde \gamma))$ is any geodesic and consider its $\tilde \varphi_1$--straightening.  Observe that $\tilde \gamma''$ has no transverse intersections with any geodesic in $g(\G^*(\tilde T,\tilde \gamma))$ and lies between $g(\tilde \gamma_\alpha)$ and $g(\tilde \gamma_\omega)$.  Considering the endpoints on the circle at infinity, we see that a $\tilde \varphi_1$--straightening of $\tilde \gamma''$ lies between $\tilde \gamma_\alpha$ and $\tilde \gamma_\omega$ and has no transverse intersections with any geodesic from $\G^*(\tilde T,\tilde \gamma)$.  But such a $\tilde \varphi_1$--geodesic would have to intersect $\tilde T$ in a leaf of the foliation of $\tilde T$ coming from $\G^*(\tilde T,\tilde \gamma)$, and would thus be a geodesic in $\G^*(\tilde T,\tilde \gamma)$.  Therefore, $\tilde \gamma'' \in g(\G^*(\tilde T, \tilde \gamma))$ proving that $\G^*(\tilde T,g(\tilde \gamma)) \subset g(\G^*(\tilde T,\tilde \gamma))$, and hence $\G^*(\tilde T,g(\tilde \gamma)) = g(\G^*(\tilde T,\tilde \gamma))$, as required.  Therefore, all that remains is to prove the claim.

\begin{proof}[Proof of Claim.]  For the remainder of this proof, set $\mathfrak g = \G^*(\tilde T,\tilde \gamma)$ and $\mathfrak g_0 = \G^*_0(\tilde T,\tilde \gamma)$ to make the notation less cumbersome.   We start by defining
$Z_1$ to be the subsurface of $\tilde S$ foliated by geodesics from $\mathfrak g$.  It has singular $\tilde \varphi_1$--geodesic boundary.
The proof strategy will be to define a corresponding $Z_2$ and to show that it can be "zipped up" to a nonsingular Euclidean subsurface
with a well-defined notion of angle.  This will let us conclude that parallelism is suitably maintained.

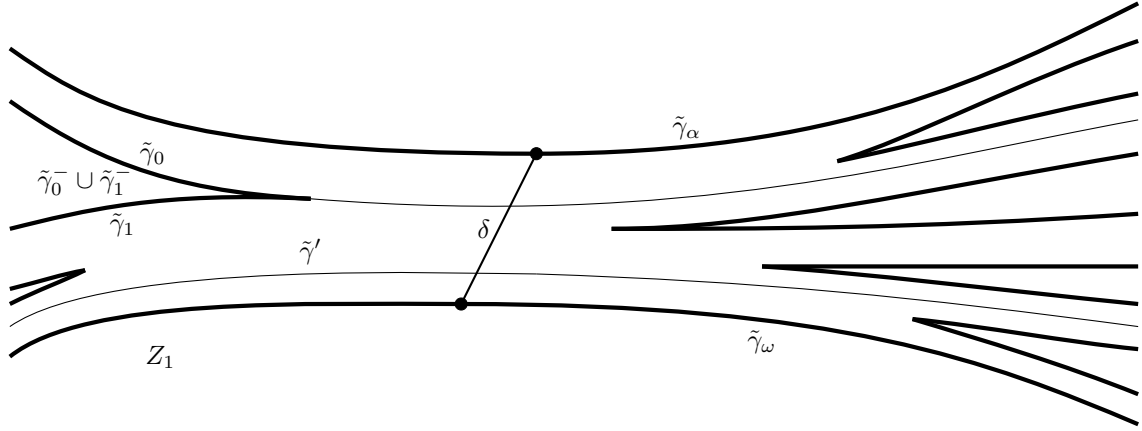
\begin{figure}[h]
\begin{center}
\begin{tikzpicture}[scale = 1]
\draw[ultra thick] (-2,3.4) .. controls (-1,2.7) and (0,2) .. (5,2); %left top
\draw[ultra thick] (5,2) .. controls (9,2) and (11,3) .. (13,4); %right top
\draw[ultra thick] (-2,-.7) .. controls (-1,.1) and (2,0) .. (4,0); %left bottom
\draw[ultra thick] (4,0) .. controls (9,0) and (11,-.75) .. (13,-1.6); %right bottom
%%%%%%
\draw[ultra thick] (-2,2.7) .. controls (-1,2) and (0,1.5) .. (2,1.4); %left wedge one top
\draw[ultra thick] (-2,1) .. controls (-1,1.25) and (0,1.5) .. (2,1.4); %left wedge one bottom
\draw[ultra thick] (-2,.2) .. controls (-1.6,.3) and (-1.3,.4) .. (-1,.45); %left wedge two top
\draw[ultra thick] (-2,0) .. controls (-1.6,.2) and (-1.3,.3) .. (-1,.45); %left wedge two bottom
%%%%%%
\draw[ultra thick] (6,1) .. controls (8,1) and (10,1.5) .. (13,2);  %main wedge right top
\draw[ultra thick] (6,1) .. controls (8,1) and (10,1) .. (13,1.2); %main wedge right bottom
\draw[ultra thick] (9,1.9) .. controls (10,2.2) and (11.5,3) .. (13,3.5); %right wedge one top
\draw[ultra thick] (9,1.9) .. controls (10,2.1) and (11.5,2.5) .. (13,2.8); %right wedge one bottom
\draw[ultra thick] (8,.5) .. controls (9.5,.5) and (11,.5) .. (13,.5); %right wedge two top
\draw[ultra thick] (8,.5) .. controls (9.5,.4) and (11,.2) .. (13,0); %right wedge two bottom
\draw[ultra thick] (10,-.2) .. controls (11,-.3) and (12,-.5) .. (13,-.6); %right wedge three top
\draw[ultra thick] (10,-.2) .. controls (11,-.5) and (12.,-.8) .. (13,-1.2); %right wedge three bottom
\draw (2,1.4) .. controls (7,1) and (10,1.9) .. (13,2.45); %cone-point asymptote
\draw (-2,-.3) .. controls (-.7,.6) and (4,.4) .. (5,.4); %gamma' left
\draw (5,.4) .. controls (8,.3) and (10,.1) .. (13,-.3); %gamma' right
\draw[thick] (5,2) -- (4,0); %delta
\draw[fill=black] (5,2) circle (.08cm);
\draw[fill=black] (4,0) circle (.08cm);
\node at (7,2.35) {$\tilde \gamma_\alpha$};
\node at (8,-.5) {$\tilde \gamma_\omega$};
\node at (-.1,2) {$\tilde \gamma_0$};
\node at (-.5,1.05) {$\tilde \gamma_1$};
\node at (2,.7) {$\tilde \gamma'$};
\node at (-1,1.65) {$\tilde \gamma_0^- \cup \tilde \gamma_1^-$};
\node at (4.3,1) {$\delta$};
\node at (0,-.7) {$Z_1$};
\end{tikzpicture}
\caption{The surface $Z_1$ together with the geodesic segment $\delta$ intersected by all geodesics in $\mathfrak g$, the nonsingular boundary components $\tilde \gamma_\alpha$ and $\tilde \gamma_\omega$, a pair $\{\tilde \gamma_0,\tilde \gamma_1\} \in P\mathfrak g_0$ defining a singular boundary component $\tilde \gamma_0^- \cup \tilde \gamma_1^-$, and another generic geodesic $\tilde \gamma' \in \mathfrak g_0$.}
\label{F:slit strip}
\end{center}
\end{figure}

We will need an alternative description of $Z_1$ as follows.  From Lemma~\ref{L:generic triangle foliation} on the structure of the geodesics in $\mathfrak g$, all singular geodesics in $\mathfrak g_0$ occur in cone-point asymptotic pairs, and we consider the set of all such pairs:
\[ P\mathfrak g_0 = \{ \{\tilde \gamma_0,\tilde \gamma_1 \} \mid \tilde \gamma_0,\tilde \gamma_1 \in \mathfrak g_0, \mbox{ cone-point asymptotic } \},\]
which has an obvious map to cone points.
Let $\mathcal H(\tilde \gamma_\alpha)$ and $\mathcal H(\tilde \gamma_\omega)$ denote the half-spaces bounded by $\tilde \gamma_\alpha$ and $\tilde \gamma_\omega$, respectively, containing $\delta$.  Then, we have
\[ Z_1 = \mathcal H(\tilde \gamma_\alpha) \cap \mathcal H(\tilde \gamma_\omega)\cap \bigcap_{\{\tilde \gamma_0,\tilde \gamma_1 \} \in P\mathfrak g_0} \mathcal S(\tilde \gamma_0,\tilde \gamma_1),\]
where $\mathcal S(\tilde \gamma_0,\tilde \gamma_1)$ is the slit space bounded by the union of subrays $\tilde \gamma_0^-$ and $\tilde \gamma_1^-$ of $\tilde\gamma_0$ and $\tilde\gamma_1$, respectively.
That $Z_1$ is a subsurface follows from this description since the boundaries of the half planes and slit spaces in the intersection form a locally finite set.
The boundary of $Z_1$ decomposes as the union of the boundaries of these half-planes and slit spaces, which are precisely $\tilde \gamma_\alpha$, $\tilde \gamma_\omega$, and the union of pairs of rays $\tilde \gamma_0^- \cup \tilde \gamma_1^-$, one for each $\{\tilde \gamma_0,\tilde \gamma_1 \} \in P\mathfrak g_0$.  Each boundary component of the form $\tilde \gamma_0^- \cup \tilde \gamma_1^-$ contains exactly one cone point and makes cone angle $2 \pi$ on the interior.  There are no cone points in the interior of $Z_1$.  Although there are cone points (the endpoints of $\delta$) on the two boundary components, $\tilde \gamma_\alpha$ and $\tilde \gamma_\omega$, the interior cone angle in $Z_1$ is $\pi$.  See Figure~\ref{F:slit strip} for an illustration of the various features of $Z_1$.

From this description, we obtain a  subsurface of $\tilde S$ for the $\tilde \varphi_2$--metric in an exactly similar way:
\[ Z_2 = \mathcal H(g(\tilde \gamma_\alpha)) \cap \mathcal H(g(\tilde \gamma_\omega)) \cap \bigcap_{\{\tilde \gamma_0,\tilde \gamma_1 \} \in P\mathfrak g_0} \mathcal S(g(\tilde \gamma_0),g(\tilde \gamma_1)).\]
%Here, $\mathcal H(g(\tilde \gamma_\alpha))$ and $\mathcal H(g(\tilde \gamma_\omega))$ are the half-planes bounded by $g(\tilde \gamma_\alpha)$ and $g(\tilde \gamma_\omega)$, respectively, containing $g_{sc}(\delta)$.  (Note that the boundaries of the half-planes again form a locally finite set in $\tilde S$).

The boundary of $Z_2$ decomposes just like $Z_1$.  By Lemma~\ref{geodcomb} and Lemma~\ref{L:V-half-planes partition}, $Z_2$ has no cone points in the interior and exactly one cone point on each boundary component.  For each pair $\{\tilde \gamma_0,\tilde \gamma_1\} \in P\mathfrak g_0$, the corresponding boundary component of $Z_2$ is of the form
\[ g(\tilde \gamma_0)^- \cup g(\tilde \gamma_1)^-\]
and has interior cone angle $2\pi$ at the unique cone point  it contains, while $g(\tilde \gamma_\alpha)$ and $g(\tilde \gamma_\omega)$ have interior cone angle $\pi$ at their cone point.

We now construct a quotient {\em nonsingular} Euclidean surface $\Pi \colon Z_2 \to \hat Z_2$ by "zipping up the slits":  
we isometrically identify the pair of rays $g(\tilde \gamma_0)^-$ and $g(\tilde \gamma_1)^-$, for each 
$\{\tilde \gamma_0,\tilde \gamma_1\} \in P\mathfrak g_0$.  Away from the cone points, $\Pi$ is a local isometric embedding, and it is 
a local isometry away from $\partial Z_2$.  In particular, $\Pi(\delta)$ is a geodesic arc connecting the boundary components $\Pi(g(\tilde \gamma_\alpha))$ and $\Pi(g(\tilde \gamma_\omega))$, and for any geodesic $\tilde \gamma' \in \mathfrak g_0$, $\Pi(g(\tilde \gamma'))$ is a geodesic in $\hat Z_2$ intersecting $\Pi(\delta)$ in the same angle that $g(\tilde \gamma')$ intersects $\delta$.  Observe that for any pair $\{ \tilde \gamma_0,\tilde \gamma_1 \} \in P\mathfrak g_0$ we have $\Pi(g(\tilde \gamma_0)) = \Pi(g(\tilde \gamma_1))$; see Figure~\ref{F:slit strip to nonsingular}.

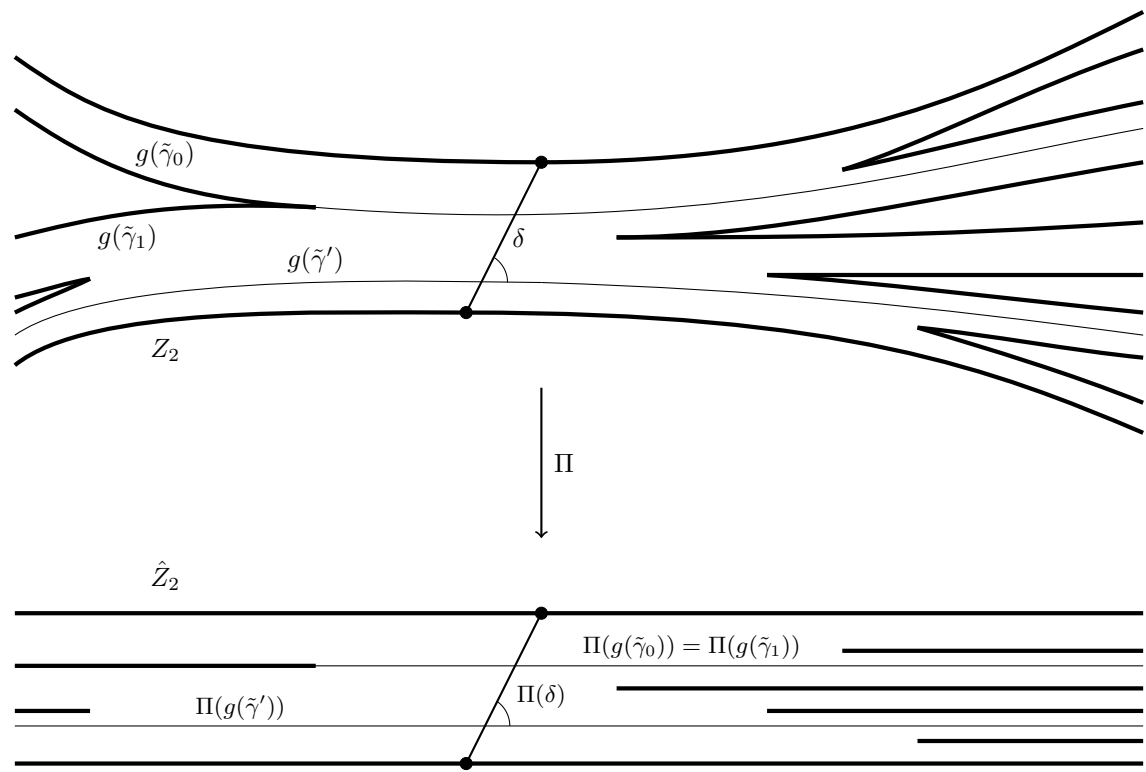
\begin{figure}[h]
\begin{center}
\begin{tikzpicture}[scale = 1]
\draw[ultra thick] (-2,3.4) .. controls (-1,2.7) and (0,2) .. (5,2); %left top
\draw[ultra thick] (5,2) .. controls (9,2) and (11,3) .. (13,4); %right top
\draw[ultra thick] (-2,-.7) .. controls (-1,.1) and (2,0) .. (4,0); %left bottom
\draw[ultra thick] (4,0) .. controls (9,0) and (11,-.75) .. (13,-1.6); %right bottom
%%%%%%
\draw[ultra thick] (-2,2.7) .. controls (-1,2) and (0,1.5) .. (2,1.4); %left wedge one top
\draw[ultra thick] (-2,1) .. controls (-1,1.25) and (0,1.5) .. (2,1.4); %left wedge one bottom
\draw[ultra thick] (-2,.2) .. controls (-1.6,.3) and (-1.3,.4) .. (-1,.45); %left wedge two top
\draw[ultra thick] (-2,0) .. controls (-1.6,.2) and (-1.3,.3) .. (-1,.45); %left wedge two bottom
%%%%%%
\draw[ultra thick] (6,1) .. controls (8,1) and (10,1.5) .. (13,2);  %main wedge right top
\draw[ultra thick] (6,1) .. controls (8,1) and (10,1) .. (13,1.2); %main wedge right bottom
\draw[ultra thick] (9,1.9) .. controls (10,2.2) and (11.5,3) .. (13,3.5); %right wedge one top
\draw[ultra thick] (9,1.9) .. controls (10,2.1) and (11.5,2.5) .. (13,2.8); %right wedge one bottom
\draw[ultra thick] (8,.5) .. controls (9.5,.5) and (11,.5) .. (13,.5); %right wedge two top
\draw[ultra thick] (8,.5) .. controls (9.5,.4) and (11,.2) .. (13,0); %right wedge two bottom
\draw[ultra thick] (10,-.2) .. controls (11,-.3) and (12,-.5) .. (13,-.6); %right wedge three top
\draw[ultra thick] (10,-.2) .. controls (11,-.5) and (12.,-.8) .. (13,-1.2); %right wedge three bottom
\draw (2,1.4) .. controls (7,1) and (10,1.9) .. (13,2.45); %cone-point asymptote
\draw (-2,-.3) .. controls (-.7,.6) and (4,.4) .. (5,.4); %gamma' left
\draw (5,.4) .. controls (8,.3) and (10,.1) .. (13,-.3); %gamma' right
%%%%%%%  Quotient surface:
\draw[->,thick] (5,-1) -- (5,-3);
\draw[ultra thick] (-2,-4) -- (13,-4);
\draw (-2,-4.7) -- (13,-4.7);
\draw[ultra thick] (-2,-6) -- (13,-6);
\draw[ultra thick] (-2,-4.7) -- (2,-4.7);
\draw[ultra thick] (-2,-5.3) -- (-1,-5.3);
\draw[ultra thick] (6,-5) -- (13,-5);
\draw[ultra thick] (9,-4.5) -- (13,-4.5);
\draw[ultra thick] (8,-5.3) -- (13,-5.3);
\draw[ultra thick] (10,-5.7) -- (13,-5.7);
\draw (-2,-5.5) -- (13,-5.5);
%%%%%%%%
\node at (0,-3.5) {$\hat Z_2$};
\node at (0,-.5) {$Z_2$};
\node at (5.3,-2) {$\Pi$};
%%%%%%%%
\draw[thick] (5,2) -- (4,0); %delta
\draw[fill=black] (5,2) circle (.08cm);
\draw[fill=black] (4,0) circle (.08cm);
\draw[thick] (5,-4) -- (4,-6); %delta
\draw[fill=black] (5,-4) circle (.08cm);
\draw[fill=black] (4,-6) circle (.08cm);
%\node at (7,2.5) {$g(\tilde \gamma_\alpha)$};
%\node at (8,-.7) {$g(\tilde \gamma_\omega)$};
\node at (4.7,1) {$\delta$};
\node at (0,2.1) {$g(\tilde \gamma_0)$};
\node at (-.5,1) {$g(\tilde \gamma_1)$};
\node at (2,.7) {$g(\tilde \gamma')$};
\node at (7,-4.45) {\small $\Pi(g(\tilde \gamma_0)) = \Pi(g(\tilde \gamma_1))$};
\node at (1,-5.25) {\small $\Pi(g(\tilde \gamma'))$};
\node at (5,-5.1) {\small $\Pi(\delta)$};
\draw [domain=0:55] plot ({4.15+.4*cos(\x)}, {.4+.4*sin(\x)});
\draw [domain=0:55] plot ({4.18+.4*cos(\x)}, {-5.5+.4*sin(\x)});
%%%%%%%%
\end{tikzpicture}
\caption{The surface $Z_2$ and nonsingular quotient $\Pi \colon Z_2 \to \hat Z_2$, together with: $g(\tilde \gamma') \in g(\mathfrak g_0)$, the cone-point asymptotic pair $\{g(\tilde \gamma_0),g(\tilde \gamma_1)\} \in g(P\mathfrak g_0)$, and $\delta$, as well as their images under $\Pi$. The angle between $\delta$ and $g(\tilde \gamma')$ is equal to the angle between $\Pi(\delta)$ and $\Pi(g(\tilde \gamma'))$.}
\label{F:slit strip to nonsingular}
\end{center}
\end{figure}

Now let $\tilde \gamma' \in \mathfrak g_0$ be any disjoint geodesic distinct from $\tilde \gamma$, and suppose that $g(\tilde \gamma')$ and $g(\tilde \gamma)$ intersect $\delta$ in different angles.  The same is then true of the images in $\hat Z_2$.  Since $\hat Z_2$ is simply connected and complete with geodesic boundary, the developing map is globally defined and injective, and is thus an isometric embedding to a closed subset of $\mathbb R^2$.  Therefore, the images of $\Pi(g(\tilde \gamma'))$ and $\Pi(g(\tilde \gamma))$, being bi-infinite lines in $\mathbb R^2$ making different angles with the image of $\Pi(\delta)$ must intersect.  On the other hand, $\tilde \gamma'$ and $\tilde \gamma$ are disjoint, and hence so are $g(\tilde \gamma')$ and $g(\tilde \gamma)$.  By construction of $\hat Z_2$, $\Pi(g(\tilde \gamma'))$ and $\Pi(g(\tilde \gamma))$ are also disjoint, and hence so are their images in $\mathbb R^2$.  This is impossible, and so $g(\tilde \gamma')$ and $g(\tilde \gamma)$ intersect $\delta$ in the same angle, proving the claim.
\end{proof}
Since the claim proves the proposition, we are done.
\end{proof}

%%%%%%%%%%%%%%%%%%%%%%%%%%%%%%%
%%%%%%%%%%%%%%%%%%%%%%%%%%%%%%%
\subsection{Holonomy preserved} \label{S:holonomy}
%%%%%%%%%%%%%%%%%%%%%%%%%%%%%%%
%%%%%%%%%%%%%%%%%%%%%%%%%%%%%%%

Our next goal is to use this preservation of parallelism to conclude that holonomy is also preserved.

Suppose that $\tilde T$ is a triangle in $\widetilde{\mathcal T}$ and $\tilde \gamma \in G_i$ is a $\tilde \varphi_i$--generic geodesic for $\tilde T$ (and so intersecting the interior of $\tilde T$).  Observe that any other geodesic in $\G^*(\tilde T,\tilde \gamma)$ intersecting the interior of $\tilde T$ is also $\tilde \varphi_i$--generic for $\tilde T$.   Therefore, if $\theta \in S^1_i(\tilde T)$ is the direction in $\tilde T$ of $\tilde \gamma$, we can unambiguously refer to $\theta$ as a {\em $\tilde \varphi_i$--generic direction}, and we write $\G^*(\tilde T,\theta) = \G^*(\tilde T,\tilde \gamma)$.  If $T$ is the image $T = p(\tilde T)$ in $S$, then we have an identification of the space of directions $S^1_i(\tilde T) = S^1_i(T)$, and we define a direction in $S^1_i(T)$ to be $\varphi_i$--generic if it is $\tilde \varphi_i$--generic in $S^1_i(\tilde T)$.  Since $G_1$ is invariant under the action of $\pi_1S$, this is independent of the choice of triangle $\tilde T$ in the preimage of $T$.  Note that for any triangle $T$, there are only countably many non-$\varphi_i$--generic directions.

Let $\Gamma$ denote the dual graph of the $1$--skeleton $\mathcal T^{(1)}$, which we view as embedded transversely and minimally intersecting $\mathcal T^{(1)}$.  An edge path $\delta \colon [0,1] \to \Gamma$ has initial vertex $\delta(0)$ in some unique triangle $T$ of $\mathcal T$ and terminal vertex in a unique triangle $T'$ of $\mathcal T$.  We say that $\delta$ is an {\em edge path of $\Gamma$ from $T$ to $T'$}, and we denote the initial and terminal vertices as $v_T = \delta(0)$ and $v_{T'} = \delta(1)$.  Given such a path $\delta$ and direction $\theta \in S^1_i(T)$, we can $\varphi_i$--parallel translate $\theta$ along $\delta$ to obtain a $\varphi_i$--direction $P_{\delta,\varphi_i}(\theta)$ in $T'$.  The parallel translate  $P_{\delta,\varphi_i}(\theta)$ depends only on the homotopy class of $\delta$, rel endpoints, in $\Gamma$ (or equivalently in $S \setminus \Sigma)$ since the metric is Euclidean away from the cone points.  In particular, if $v_T \in \Gamma^{(0)}$ is the base point in $T$ then this defines the $\varphi_i$--holonomy homomorphism
\[ P_{\varphi_i} \colon \pi_1(\Gamma,v_T) \cong \pi_1(S \setminus \Sigma,v_T) \to SO(2),\]
by $P_{\varphi_i}([\delta]) \cdot \theta = P_{\delta,\varphi_i}(\theta)$ for any loop $\delta$ based at $v_T$.

Given a triangle $T$ of $\mathcal T$, we say that a direction $\theta \in S^1_i(T)$ is {\em $\varphi_i$--stably generic} if for every triangle $T'$ in $\mathcal T$ and path $\delta$ from $T$ to $T'$, $P_{\delta,\varphi_i}(\theta) \in S^1_i(T')$ is $\varphi_i$--generic.   It follows that for any path $\delta$ in $\Gamma$ from $T$ to $T'$, the direction $\theta \in S^1_i(T)$ is $\varphi_i$--stably generic if and only if $P_{\delta,\varphi_i}(\theta) \in S^1_i(T')$ is $\varphi_i$--stably generic.  Since there are only countably many homotopy classes of paths between any two vertices of $\Gamma$, and only countably many non-$\varphi_i$--generic directions in $S^1_i(T)$, for any $T$ in $\mathcal T$, it follows that there are only countably many non-$\varphi_i$--stably generic directions in $S^1_i(T)$.  We denote the complementary set of $\varphi_i$--stably generic directions in $T$ by
\[ \Delta(T,\varphi_i) = \{ \theta \in S^1_i(T) \mid \theta \mbox{ is $\varphi_i$--stably generic in $T$ } \}.\]

The next result is the key to determining angles of triangles when the holonomy is infinite.  
\begin{proposition}[Defining angles] \label{P:holonomy conjugating homeo} For every triangle $T\in \mathcal T$ there is an orientation-preserving homeomorphism
\[ F_T \colon S^1_1(T) \to S^1_2(T)\]
which is $\pi_1(\Gamma,v_T)$--equivariant with respect to the holonomy homomorphisms:
\[ F_T(P_{\varphi_1}(\delta) \cdot \theta) = P_{\varphi_2}(\delta) \cdot F_T(\theta).\]
Furthermore, if $\tilde T$ is a triangle in $\widetilde{\mathcal T}$ with $p(\tilde T) = T$ and $\theta \in \Delta(T,\varphi_1)$, then $F_T(\theta) \in \Delta(T,\varphi_2)$ and
\[ g(\G^*(\tilde T,\theta)) = \G^*(\tilde T,F_T(\theta)).\]
\end{proposition}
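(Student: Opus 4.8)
The plan is to build $F_T$ by transporting the "generic direction structure" established by Proposition~\ref{P:split strips preserved} through the dual graph $\Gamma$. First I would observe that Proposition~\ref{P:split strips preserved} already gives, for a fixed triangle $\tilde T$, a partially defined map on directions: if $\theta \in S^1_1(\tilde T)$ is $\tilde\varphi_1$--generic and $\tilde\gamma \in G_1$ realizes $\theta$ in $\tilde T$, then $g(\tilde\gamma)$ is $\tilde\varphi_2$--generic, and I define $F_T^{0}(\theta)$ to be the direction of $g(\tilde\gamma)$ in $\tilde T$. This is well-defined (independent of the choice of $\tilde\gamma$ realizing $\theta$) precisely because $\G^*(\tilde T, g(\tilde\gamma)) = g(\G^*(\tilde T,\tilde\gamma))$, so any two geodesics in $G_1$ hitting $\tilde T$ in parallel arcs are sent to geodesics hitting $\tilde T$ in parallel arcs. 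The map $F_T^0$ is thus defined on the co-countable set of $\varphi_1$--generic directions and satisfies $g(\G^*(\tilde T,\theta)) = \G^*(\tilde T, F_T^0(\theta))$ by construction. The key geometric point for monotonicity is that $F_T^0$ respects the cyclic (betweenness) order on directions: if $\theta, \theta', \theta''$ are $\varphi_1$--generic and $\theta'$ lies between $\theta$ and $\theta''$, then the foliations $\G^*(\tilde T,\theta')$ sit between $\G^*(\tilde T,\theta)$ and $\G^*(\tilde T,\theta'')$ in the sense of betweenness of endpoints on $S^1_\infty$ (all three foliations share the side $\delta$ or can be compared via a common transversal), and since $g$ preserves endpoints, the same betweenness holds for the images. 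Hence $F_T^0$ is a strictly monotone (orientation-preserving), injective map between co-countable subsets of circles; such a map extends uniquely to an orientation-preserving homeomorphism $F_T \colon S^1_1(T) \to S^1_2(T)$ by taking limits from both sides (using that both the domain and the range of $F_T^0$ are dense).

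Next I would check $\pi_1(\Gamma,v_T)$--equivariance. Because $G_i$ is $\pi_1 S$--invariant and $g$ is $\pi_1 S$--equivariant (it is built from $\partial_{\tilde\varphi_i}$, which are equivariant, and the identification of cone points is equivariant), the map $F_T^0$ is compatible with deck transformations: moving $\tilde T$ to $h\tilde T$ and applying parallel transport, one sees $F_{T}^{0}$ intertwines the $\varphi_1$-- and $\varphi_2$--parallel transport along any edge path $\delta$ in $\Gamma$. In detail, if $\delta$ goes from $T$ to $T'$, then lifting $\delta$ to a path in $\tilde S$ from $\tilde T$ to $\tilde T'$ and using that parallel transport of the \emph{geodesic direction} of a fixed $\tilde\gamma \in G_i$ along $\delta$ is just the direction of the \emph{same} geodesic $\tilde\gamma$ in the new triangle $\tilde T'$ (the metric being Euclidean off the cone points, and $\tilde\gamma$ a straight line there, wherever $\tilde\gamma$ actually passes through both $\tilde T$ and $\tilde T'$; in general one perturbs $\theta$ slightly and extends by continuity), we get $P_{\delta,\varphi_2} \circ F_T^0 = F_{T'}^0 \circ P_{\delta,\varphi_1}$ on $\varphi_1$--stably generic directions. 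Passing to the continuous extension $F_T$ and specializing to loops $\delta$ based at $v_T$ yields $F_T(P_{\varphi_1}(\delta)\cdot\theta) = P_{\varphi_2}(\delta)\cdot F_T(\theta)$. Finally, for $\theta \in \Delta(T,\varphi_1)$ the equivariance just established shows every translate $P_{\delta,\varphi_1}(\theta)$ is $\varphi_1$--generic, so its image under the appropriate $F_{T'}$ is $\varphi_2$--generic; hence $F_T(\theta) \in \Delta(T,\varphi_2)$, and $g(\G^*(\tilde T,\theta)) = \G^*(\tilde T, F_T(\theta))$ is exactly the defining property of $F_T^0$, which holds on the nose (not merely after extension) for stably generic $\theta$.

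The main obstacle I anticipate is the well-definedness and continuity/monotonicity of $F_T^0$ — i.e., genuinely extracting from Proposition~\ref{P:split strips preserved} that \emph{distinct} generic directions in $\tilde T$ go to \emph{distinct} directions, in an order-preserving way, and that there is no "collapsing" of an interval of directions. This requires arguing that two non-parallel generic geodesics in $\tilde T$ cannot have $g$--images that are parallel in $\tilde T$; this should follow from the uniqueness clause in Proposition~\ref{P:split strips preserved} ($\G^*(\tilde T, g(\tilde\gamma)) = g(\G^*(\tilde T,\tilde\gamma))$ together with the fact that $g$ is a bijection on $G_1 \to G_2$, so if $g(\tilde\gamma) \parallel g(\tilde\gamma')$ then $\tilde\gamma' \in \G^*(\tilde T, \tilde\gamma)$ by applying $g^{-1}$ and the analogous proposition with the roles reversed), but the bookkeeping with the exceptional (non-stably-generic) directions and the passage to the continuous extension needs care. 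A secondary subtlety is handling directions $\theta$ such that the geodesic through a chosen point of $\tilde T$ in direction $\theta$ does not itself lie in $G_i$ (only countably many, but they must be reached by limits), and making sure the limiting arguments respect the cyclic order near the two "extreme" boundary leaves $\tilde\gamma_\alpha, \tilde\gamma_\omega$ of each foliation.
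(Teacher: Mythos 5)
Your proposal is correct and follows essentially the same route as the paper: define $F_T$ on the co-countable set of generic directions via Proposition~\ref{P:split strips preserved}, show it preserves cyclic order using the fact that $g$ preserves endpoints at infinity, extend to a homeomorphism by density, and obtain equivariance by transporting directions across shared edges of adjacent triangles. The only cosmetic difference is in the order-preservation step, where the paper runs three geodesics through a single interior point of $\tilde T$ and compares the cyclic order of their forward endpoints (which is cleaner than your ``betweenness of foliations'' phrasing, since non-parallel generic geodesics link rather than lie between one another), but the underlying mechanism is identical.
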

By the last statement, $F_T$ is determined by $g$ via the stably generic geodesic foliations of $\tilde T$.
\begin{proof}  Fix any triangle $\tilde T$ in $\widetilde{\mathcal T}$.  According to Proposition~\ref{P:split strips preserved}, for any $\tilde \varphi_1$--generic direction $\theta \in S^1_1(\tilde T)$ and $\tilde \varphi_1$--geodesic $\tilde \gamma$ intersecting $\tilde T$ in direction $\theta$, we have that $g(\tilde \gamma)$ is a $\tilde \varphi_2$--generic geodesic, and we can uniquely define $F_{\tilde T}(\theta) \in S^1_2(\tilde T)$ to be the $\tilde \varphi_2$--generic direction of $g(\tilde \gamma)$.  This uniquely determines a bijection $F_{\tilde T}$ from the set of  $\tilde \varphi_1$--generic directions in $S^1_1(\tilde T)$ to the $\tilde \varphi_2$--generic directions in $S^1_2(\tilde T)$ satisfying
\[ g(\G^*(\tilde T,\theta)) = g(\G^*(\tilde T,\tilde \gamma)) =  \G^*(\tilde T,g(\tilde \gamma))  = \G^*(\tilde T,F_{\tilde T}(\theta)).\]

Consider any three directions $\theta_1,\theta_2,\theta_3$ appearing cyclically in this order around $S^1_1(\tilde T)$ and let $\tilde \gamma_1,\tilde \gamma_2,\tilde \gamma_3$  be $\tilde \varphi_1$--geodesics through a single point $x$ in the interior of $\tilde T$ in each of these directions, respectively.  The forward endpoints on the circle at infinity for each of these geodesics is the same as for their images by $g$, and hence $F_{\tilde T}(\theta_1),F_{\tilde T}(\theta_2),F_{\tilde T}(\theta_3)$ also appear cyclically in the same order around $S^1_2(\tilde T)$.  That is, $F_{\tilde T}$ preserves the cyclic ordering.  It follows that for $T = p(\tilde T)$, $F_{\tilde T}$ uniquely extends to a homeomorphism $F_T \colon S^1_1(T) \to  S^1_2(T)$.
By construction, the last condition of the proposition is satisfied.

Next, suppose $\tilde T, \tilde T'$ are triangles that share an edge $\tilde e$, let $T = p(\tilde T)$ and $T' = p(\tilde T')$ with $e = p(\tilde e) \in T \cap T'$ the image of the shared edge, and let $\delta$ be the edge of $\Gamma$ dual to $e$ with $\tilde \delta$ a lift of $\delta$ intersecting $\tilde e$.
Suppose $\theta \in S^1_1(T)$ is any $\varphi_1$--stably generic direction and $\tilde \gamma$ is a geodesic intersecting $\tilde T$ in the direction $\theta$, and suppose that $\tilde \gamma$ crosses $\tilde e$.  From the property of $F_T$ we have already proved, we know that the direction of $g(\tilde \gamma)$ in $\tilde T$ is $F_T(\theta)$.  Since $\tilde \gamma$ and $g(\tilde \gamma)$ cross $\tilde e$, both intersect $\tilde T'$.  Parallel transport along an arc of $\tilde \gamma$ contained in the union $\tilde T \cup \tilde T'$ or along $\tilde \delta$ (or equivalently along $\delta$ via the identification from the covering $p$) both define the same maps $P_{\delta,\varphi_1} \colon S^1_1(T) \to S^1_1(T')$, and it follows that the direction of $\tilde \gamma$ in $\tilde T'$ is $P_{\delta,\varphi_1}(\theta)$.  Similarly, the direction of $g(\tilde \gamma)$ in $\tilde T'$ is $P_{\delta,\varphi_2}(F_T(\theta))$.  On the other hand, applying what we already proved about $F_{T'}$, we know that the direction of $g(\tilde \gamma)$ in $\tilde T'$ is $F_{T'}(P_{\delta,\varphi_1}(\theta))$.  That is,
\begin{equation} \label{E:holonomy eq 1} F_{T'} \circ P_{\delta,\varphi_1}(\theta) = P_{\delta,\varphi_2} \circ F_T(\theta).
\end{equation}
Since this is true on a dense set in $S^1_1(T)$, it is true on all of $S^1_1(T)$, by continuity.

If $\delta = \delta_1 \delta_2 \cdots \delta_k$ is any edge path connecting vertices $v_{T_0},v_{T_1}, \cdots , v_{T_k}$ (so that $\delta_j$ is an edge from $v_{T_{j-1}}$ to $v_{T_j}$), then by repeatedly applying Equation (\ref{E:holonomy eq 1}) we have
\[ P_{\delta,\varphi_2} \circ F_{T_0} = P_{\delta_2 \cdots \delta_k,\varphi_2} \circ F_{T_1} \circ P_{\delta_1,\varphi_1} = P_{\delta_3 \cdots \delta_k,\varphi_2} \circ F_{T_2} \circ P_{\delta_1 \delta_2,\varphi_1} = \cdots = F_{T_k} \circ P_{\delta,\varphi_1}. \]
The fact that $F_T$ is equivariant with respect to the holonomy homomorphisms $P_{\varphi_1}$ and $P_{\varphi_2}$ now follows by taking $\delta$ in this equation to be a loop based at a vertex of $\Gamma$.
\end{proof}

\begin{corollary} \label{C:equal holonomy} $\varphi_1$ and $\varphi_2$ induce the same holonomy homomorphism
\[ P_{\varphi_1} = P_{\varphi_2} \colon \pi_1(S \setminus \Sigma) \to SO(2).\]
\end{corollary}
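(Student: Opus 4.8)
The plan is to deduce Corollary~\ref{C:equal holonomy} directly from Proposition~\ref{P:holonomy conjugating homeo}, essentially for free. Fix a triangle $T$ of $\mathcal T$, a base vertex $v_T$ of $\Gamma$ in $T$, and the orientation-preserving homeomorphism $F_T \colon S^1_1(T) \to S^1_2(T)$ supplied by that proposition. Under the isometric identifications $S^1_i(T) \cong S^1$ fixed in \S\ref{S:triangulations}, the holonomy homomorphism $P_{\varphi_i}$ acts on $S^1_i(T)$ through rotations, since its image lies in $SO(2)$. The equivariance of $F_T$ then reads
\[ P_{\varphi_2}(\delta) = F_T \circ P_{\varphi_1}(\delta) \circ F_T^{-1} \qquad \text{for every } \delta \in \pi_1(\Gamma,v_T) \cong \pi_1(S \setminus \Sigma, v_T), \]
so for each fixed $\delta$ the Euclidean rotations $P_{\varphi_1}(\delta)$ and $P_{\varphi_2}(\delta)$ of $S^1$ are conjugate by an orientation-preserving circle homeomorphism.

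The one genuine point is that $F_T$ itself need not be a rotation, so one cannot simply cancel it; I expect this to be the main (if mild) obstacle. I would clear it with the standard fact that orientation-preserving topological conjugacy preserves rotation number, together with the observation that a rotation is determined among rotations by its rotation number. Concretely: writing the rotation angles of $P_{\varphi_1}(\delta)$ and $P_{\varphi_2}(\delta)$ as $\alpha$ and $\beta$, pass to an increasing lift $\tilde F_T \colon \mathbb R \to \mathbb R$ of $F_T$ commuting with translation by $2\pi$, so that $x \mapsto \tilde F_T(x) - x$ is bounded; then choose lifts $\tilde\alpha, \tilde\beta$ of $\alpha, \beta$ for which $\tilde F_T(x + \tilde\alpha) = \tilde F_T(x) + \tilde\beta$ holds identically (the integer ambiguity is locally, hence globally, constant in $x$ and can be absorbed into $\tilde\beta$). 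Iterating gives $\tilde F_T(x + n\tilde\alpha) = \tilde F_T(x) + n\tilde\beta$, and since $\tilde F_T - \mathrm{id}$ is bounded, $n(\tilde\beta - \tilde\alpha)$ must be bounded in $n$, forcing $\tilde\alpha = \tilde\beta$ and hence $\alpha \equiv \beta \pmod{2\pi}$. Thus $P_{\varphi_1}(\delta) = P_{\varphi_2}(\delta)$.

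Since $\delta$ was arbitrary, the two holonomy homomorphisms agree on $\pi_1(\Gamma,v_T) \cong \pi_1(S \setminus \Sigma, v_T)$. Finally, as the image of each $P_{\varphi_i}$ lies in the abelian group $SO(2)$, these homomorphisms are independent of the choice of base point (the change-of-basepoint isomorphisms act by conjugation, which is trivial here), so the equality at $v_T$ upgrades to $P_{\varphi_1} = P_{\varphi_2} \colon \pi_1(S \setminus \Sigma) \to SO(2)$, which is the statement of the corollary.
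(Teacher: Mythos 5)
Your proposal is correct and follows essentially the same route as the paper: the paper's proof likewise observes that $F_T$ gives an orientation-preserving topological conjugacy between $P_{\varphi_1}(\delta)$ and $P_{\varphi_2}(\delta)$ and then invokes the fact that a rotation is determined by the (conjugacy-invariant) combinatorics of an orbit, which is exactly the rotation-number argument you spell out via lifts. Your extra remark about basepoint independence (using that $SO(2)$ is abelian) is implicit in the paper's statement and is handled correctly.
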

\begin{proof} For every $\gamma \in \pi_1(\Gamma,v_T) = \pi_1(S \setminus \Sigma,v_T)$, Proposition~\ref{P:holonomy conjugating homeo} implies that $F_T$ is an orientation-preserving topological conjugacy between $P_{\varphi_1}(\gamma)$ and $P_{\varphi_2}(\gamma)$.   Any rotation is determined by the cyclic ordering on any orbit and its action on this cyclically ordered set.  Consequently, $P_{\varphi_1}(\gamma) = P_{\varphi_2}(\gamma)$, and since $\gamma$ was arbitrary, $P_{\varphi_1} = P_{\varphi_2}$.
\end{proof}

When the holonomy is infinite, we get the most information about the homeomorphism $F_T$.

\begin{corollary}~\label{C:infinite holonomy isometry} If $P_{\varphi_1}(\pi_1(\Gamma))$ is infinite, then for each triangle $T$, the map 
$F_T$ is an isometric identification of the circle of directions in the two metrics. 
In particular, $F_T$ conjugates $P_{\varphi_1}$ to $P_{\varphi_2}$ {\em inside} $SO(2)$.
\end{corollary}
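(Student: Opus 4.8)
The plan is to combine Corollary~\ref{C:equal holonomy}, which says the two holonomy homomorphisms literally coincide, with the equivariance of $F_T$ established in Proposition~\ref{P:holonomy conjugating homeo}, and then to invoke the elementary fact that an orientation-preserving homeomorphism of the circle commuting with a dense group of rotations is itself a rotation. Since rotations are isometries of the round circle, this immediately upgrades $F_T$ from a topological conjugacy to an isometric identification.

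First I would fix a triangle $T$ and use the isometries from $T$ to Euclidean triangles chosen in \S\ref{S:triangulations} to identify both $S^1_1(T)$ and $S^1_2(T)$ with the standard circle $\R/\Z$, arranged so that in each metric the holonomy acts by translations, i.e. by elements of $SO(2)$ in the usual way. By Corollary~\ref{C:equal holonomy} we have $P_{\varphi_1}=P_{\varphi_2}$ as homomorphisms $\pi_1(\Gamma,v_T)\to SO(2)$; write $P$ for this common homomorphism and $G=P(\pi_1(\Gamma,v_T))\leq SO(2)$ for its image, which is infinite by hypothesis. Proposition~\ref{P:holonomy conjugating homeo} then gives that $F_T$ is an orientation-preserving homeomorphism satisfying $F_T(R\cdot\theta)=R\cdot F_T(\theta)$ for all $R\in G$ and all $\theta$; in the additive coordinates on $\R/\Z$ this reads $F_T(\theta+\alpha)=F_T(\theta)+\alpha$ for every $\alpha\in G$ and every $\theta\in\R/\Z$.

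Next I would evaluate this identity at $\theta=0$ to obtain $F_T(\alpha)=F_T(0)+\alpha$ for every $\alpha\in G$. Any infinite subgroup of $SO(2)$ is dense, so $G$ is dense in $\R/\Z$; since $F_T$ and the map $\theta\mapsto F_T(0)+\theta$ are continuous and agree on the dense set $G$, they agree everywhere. Hence $F_T$ is translation by $F_T(0)$, i.e. the rotation by $F_T(0)\in SO(2)$, and in particular it is an isometry of the circle of directions in the two metrics. Finally, since $SO(2)$ is abelian, conjugation by this rotation is trivial, so the equivariance simply re-expresses $P_{\varphi_1}=P_{\varphi_2}$; thus $F_T$ conjugates $P_{\varphi_1}$ to $P_{\varphi_2}$ inside $SO(2)$, as claimed.

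I do not expect any serious obstacle: all the substantive work has already been done in Proposition~\ref{P:holonomy conjugating homeo} and Corollary~\ref{C:equal holonomy}. The only point that needs a little care is the bookkeeping around the identifications $S^1_i(T)\cong\R/\Z$ from \S\ref{S:triangulations}: one must make sure these are precisely the coordinates in which the holonomy acts by standard rotations, so that "$F_T$ commutes with $G$" genuinely means "$F_T$ commutes with a dense set of rotations of the round circle" and therefore that concluding "$F_T$ is a rotation" really yields an isometry of the spaces of directions and not merely a topological conjugacy.
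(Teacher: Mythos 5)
Your proof is correct and follows essentially the same route as the paper: both arguments reduce to the observation that $F_T$ commutes with a dense set of rotations of the circle of directions and is therefore itself a rotation, hence an isometry. The only cosmetic difference is that you invoke Corollary~\ref{C:equal holonomy} and density of the full infinite image subgroup, whereas the paper picks a single infinite-order element, uses invariance of the rotation number under topological conjugacy to equate the two angles, and then uses density of that one irrational orbit.
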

To emphasize this last point, we note that $F_T$ is a priori a topological conjugacy, but when the holonomy is infinite, it is in fact an {\em isometric} conjugacy.
\begin{proof} Since $P_{\varphi_1}(\pi_1(\Gamma,v_T))$ is an infinite, finitely generated abelian group, there exists an element $[\delta] \in \pi_1(\Gamma,v_T)$ such that $P_{\varphi_1}([\delta])$ has infinite order.  Then both $P_{\varphi_1}([\delta])$ and $P_{\varphi_2}([\delta])$ are irrational rotations, topologically conjugate by the homeomorphism $F_T$.  It follows that their angles of rotation are equal, and $F_T$ is an isometry on the orbit of any point by $\langle P_{\varphi_1}([\delta]) \rangle$.  Since this orbit is dense, $F_T$ is an isometry.
\end{proof}

From this, we easily deduce the following, which essentially proves the Support Rigidity Theorem when the holonomy has infinite order.

\begin{corollary} \label{C:infinite holonomy proof} If $P_{\varphi_1}(\pi_1(\Gamma))$ is infinite, then the $\varphi_1$--interior angle 
at any vertex of any triangle $T$ of $\mathcal T$ is equal to the $\varphi_2$--interior angle of the same vertex.
\end{corollary}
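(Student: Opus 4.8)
The plan is to derive this directly from Corollary~\ref{C:infinite holonomy isometry}, which tells us that when the holonomy is infinite the conjugating map $F_T\colon S^1_1(T)\to S^1_2(T)$ of Proposition~\ref{P:holonomy conjugating homeo} is in fact an orientation-preserving \emph{isometry} of circles, combined with the observation that $F_T$ respects a canonical decomposition of the circle of directions of $T$ into arcs indexed by the sides of $T$, in which the arc indexed by a side $s$ has total length exactly twice the interior angle at the vertex opposite $s$.

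First I would set up this decomposition. Fix a triangle $T$ of $\mathcal T$ with a lift $\tilde T$ to $\widetilde{\mathcal T}$, let $s$ be a side of $T$ with endpoints (cone points) $P,Q$ and opposite vertex $R$, and fix $i\in\{1,2\}$. I claim a $\tilde\varphi_i$--generic direction for $\tilde T$ is never parallel to a side of $T$: a sequence of $\tilde\varphi_i$--geodesics whose arcs in $\tilde T$ run parallel to, and converge to, the segment $s$ has a subsequential Chabauty limit in $\G^*(\tilde\varphi_i)$ containing $s$, hence meeting the two cone points $P,Q$; by Proposition~\ref{P:basic} this limit lies in $\G^2(\tilde\varphi_i)$, so its endpoints lie in $\G^2_{\tilde\varphi_i}\subset\Omega$ and it is not in $G_i$, contradicting $\tilde\varphi_i$--genericity. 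Consequently, for a generic direction $\theta$ the three vertices of $\tilde T$ have three distinct coordinates in the direction orthogonal to $\theta$, so by Lemma~\ref{L:generic triangle foliation} the distinguished side of the foliation $\G^*(\tilde T,\theta)$ is the unique side of $\tilde T$ both of whose endpoints are separated by every leaf meeting the interior of $\tilde T$, and this side is $s$ precisely when $R$ is the vertex of intermediate height. Writing $A_i(s)\subset S^1_i(T)$ for the open set of directions $\theta$ for which $R$ has intermediate height, this set is an antipode-invariant union of two open arcs whose endpoints are the $\varphi_i$--directions of the two sides of $T$ meeting at $R$; a routine Euclidean computation shows $A_i(s)$ does not contain the $\varphi_i$--direction of $s$ and has total length $2\,\angle_i(R)$, where $\angle_i(R)$ is the $\varphi_i$--interior angle of $T$ at $R$. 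Setting $U_i(s)=A_i(s)\cap\{\varphi_i\text{--generic directions}\}$, the set $U_i(s)$ is co-countable in $A_i(s)$, hence dense in $\overline{A_i(s)}$.

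Next I would show $F_T(U_1(s))=U_2(s)$ for every side $s$. For generic $\theta\in S^1_1(T)$, Proposition~\ref{P:split strips preserved} together with the construction of $F_{\tilde T}$ in Proposition~\ref{P:holonomy conjugating homeo} gives $g(\G^*(\tilde T,\theta))=\G^*(\tilde T,F_{\tilde T}(\theta))$, and $g$ carries the leaves meeting the interior of $\tilde T$ bijectively to the leaves of $\G^*(\tilde T,F_{\tilde T}(\theta))$ meeting the interior of $\tilde T$. By exactly the argument in the proof of Proposition~\ref{P:split strips preserved} --- cone point partitions are preserved by $g$ (Lemma~\ref{geodcomb}), and each side $s$ of $\tilde T$ is a $\tilde\varphi_2$--geodesic as well by our standing convention --- the two endpoints of $s$ are separated by a given interior leaf of $\G^*(\tilde T,\theta)$ if and only if they are separated by the corresponding interior leaf of $\G^*(\tilde T,F_{\tilde T}(\theta))$. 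With the uniqueness of the distinguished side, this says exactly that $s$ is distinguished for $\theta$ in $\varphi_1$ if and only if it is distinguished for $F_{\tilde T}(\theta)$ in $\varphi_2$, i.e.\ $F_{\tilde T}(U_1(s))=U_2(s)$. Since $F_T$ is a homeomorphism and $U_i(s)$ is dense in $\overline{A_i(s)}$, taking closures yields $F_T(\overline{A_1(s)})=\overline{A_2(s)}$.

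Finally, since $F_T$ is an \emph{isometry} of circles by Corollary~\ref{C:infinite holonomy isometry}, it maps the closed arcs comprising $\overline{A_1(s)}$ isometrically onto those comprising $\overline{A_2(s)}$, so these sets have equal total length: $2\,\angle_1(R)=2\,\angle_2(R)$, i.e.\ $\angle_1(R)=\angle_2(R)$. Letting $s$ --- equivalently, the opposite vertex $R$ --- range over the three sides of $T$, and $T$ over all triangles of $\mathcal T$, this proves the corollary. The only step that calls for genuine (if elementary) work is the Euclidean identification of $A_i(s)$ as a pair of arcs of total length $2\,\angle_i(R)$, equivalently the classical fact that the three lines through a point parallel to the sides of a Euclidean triangle cut the pencil of directions into arcs equal to the triangle's interior angles; the remaining input is a direct combination of preservation of parallelism (Proposition~\ref{P:split strips preserved}) and of cone point partitions (Lemma~\ref{geodcomb}), packaged through the isometry $F_T$.
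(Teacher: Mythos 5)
Your proof is correct and follows essentially the same route as the paper: both arguments use the locally constant ``distinguished side'' function on generic directions (your $A_i(s)$ decomposition is exactly the level-set structure of the paper's $h_i$), its preservation under $F_T$ via Propositions~\ref{P:split strips preserved} and \ref{P:holonomy conjugating homeo}, and the isometry statement of Corollary~\ref{C:infinite holonomy isometry}. The only cosmetic difference is that the paper concludes by noting $F_T(\Theta_1(\tilde T))=\Theta_2(\tilde T)$ so the six side-directions differ by an isometry, whereas you measure the lengths of the complementary arcs directly; these are the same computation.
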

\begin{proof}  Let $\tilde T$ be a triangle with $p(\tilde T) = T$ and  for each $i = 1,2$, 
let $\Theta_i(\tilde T) \subset S^1_i(\tilde T)$ be the set of six $\tilde \varphi_i$--directions parallel to the sides of $\tilde T$ (each side is considered with both orientations so appears twice). 
For every $\theta \in S^1_i(\tilde T) \setminus \Theta_i(\tilde T)$, the foliation of $\tilde T$ determined by $\G^*(\tilde T,\theta)$  
is transverse to each of the sides.  Furthermore, for any such $\theta$, the side which meets every leaf defines a locally constant function
\[ h_i \colon S^1_i(\tilde T) \setminus \Theta_i(\tilde T) \to \mbox{sides}(\tilde T).\]
This function changes values at precisely each of the six directions in $\Theta_i(\tilde T)$.  In fact, $h_i$ changes value from one side to another at the direction of the third side.  Now note that for a $\tilde \varphi_1$--generic direction $\theta$, 
we have $h_1(\theta) = h_2(F_T(\theta))$, by Proposition~\ref{P:holonomy conjugating homeo}.  Consequently,
\[ F_T(\Theta_1(\tilde T)) = \Theta_2(\tilde T).\]
Therefore, since the previous result shows that $F_T$ is an isometry, the $\tilde \varphi_1$--directions of the three sides of $\tilde T$ differ from the $\tilde \varphi_2$--directions by an isometry, and consequently, the $\tilde \varphi_1$--angles and $\tilde \varphi_2$--angles of $\tilde T$ agree.  Pushing back down to $T$ proves the corollary.
\end{proof}

%%%%%%%%%%%%%%%%%%%%%%%%%%%%%%%%%%%%%%%%%%%%%%%%%
%%%%%%%%%%%%%%%%%%%%%%%%%%%%%%%%%%%%%%%%%%%%%%%%%
\subsection{Proof of the Support Rigidity Theorem} \label{S:support-rigidity}
%%%%%%%%%%%%%%%%%%%%%%%%%%%%%%%%%%%%%%%%%%%%%%%%%
%%%%%%%%%%%%%%%%%%%%%%%%%%%%%%%%%%%%%%%%%%%%%%%%%

We are now ready to put together all the pieces and show that a flat metric is determined (up to affine equivalence) by its support.

\begin{proof}[Proof of Support Rigidity Theorem]  We continue to assume, without loss of generality, that $\varphi_1$ and $\varphi_2$ are representatives chosen so that we can identify geodesics and triangulations as above.   
The proof of the Support Rigidity Theorem divides into two cases, depending on whether the holonomy is finite or infinite.

\medskip

\noindent
{\bf Case 1.} The holonomy of $\varphi_1$ is infinite.\\
 
 For every triangle $T$ of $\mathcal T$ and every vertex of $T$, the interior 
 $\varphi_1$--angle is equal to the interior $\varphi_2$--angle  (Corollary~\ref{C:infinite holonomy proof}), and thus, the $\varphi_1$--metric and $\varphi_2$--metric on $T$ are similar.  
 Since the identity $(S,\varphi_1) \to (S,\varphi_2)$ is already affine on each triangle $T \in \mathcal T$, it is in fact a similarity, and so it scales distances by a constant factor $c_T$.  When two triangles share an edge, their scaling factors must be equal.  Since $S$ is connected, $c_T = c_{T'}$ for any two triangles $T,T'$ of $\mathcal T$, and hence $\varphi_1$ and $\varphi_2$ differ by some global scalar $c >0$.  
 Since both $\varphi_1$ and $\varphi_2$ have unit area, the scalar $c$ must equal $1$, and hence $\varphi_1 = \varphi_2$.\\

\noindent
{\bf Case 2.} The holonomy of $\varphi_1$ is finite.\\

Since the two metrics induce the same holonomy homomorphism (Corollary~\ref{C:equal holonomy}), 
we can let $\pi \colon S' \to S$ denote the branched cover corresponding to the kernel of this homomorphism $P_{\varphi_i}$.
More precisely, $S'$ is the metric completion of the cover of $S \setminus \Sigma$ corresponding to the kernel, and $\pi$ is the extension of the covering map to the completion (which sends completion points $\Sigma'$ of $S'$ to appropriate points of $\Sigma$).  Write $\varphi_i' = \pi^*(\varphi_i)$ for the pullback of $\varphi_i$ to $S'$, for $i=1,2$.  Since the cover was constructed from the kernel, 
both of these metrics have trivial holonomy.

Moreover, the cylinder sets are equal in the two metrics (Corollary~\ref{C:common cylinders}),
and the homotopy from a cylinder curve for $\varphi_1$ to a cylinder curve for $\varphi_2$ occurs is in the complement of $\Sigma$.  Now observe that the cylinders for $\varphi_i'$ are precisely the preimages of the cylinders for $\varphi_i$ under 
$\pi$, for $i=1,2$, and hence $\cyl(\varphi_1') = \cyl(\varphi_2')$ on $S' \setminus \Sigma'$.  
By passing to these covers we are now in $\Flat_1$ and we may invoke Theorem~\ref{DLR} to conclude from the equality of cylinder sets that 
$\varphi_1'$ and $\varphi_2'$ are affine-equivalent on $S' \setminus \Sigma'$.  Let $f'$ be the affine map, isotopic to the identity on $S' \setminus \Sigma'$, which extends by the identity over $\Sigma'$.

Now every triangle $T\in\mathcal T$ is covered by a triangle $T' \subset \pi^{-1}(T)$, and the map 
\[ f'|_{T'} \colon (T',\varphi_1') \to (f'(T'),\varphi_2')\]
is  affine. By our convention for triangulations, the identity $(T',\varphi_1') \to (T',\varphi_2')$ is also affine.  So, since $f'$ is isotopic to the identity rel $\Sigma'$, it follows that for each edge $e$ of $T'$, $f'(e)$ is the unique $\varphi_2'$--geodesic representative of the isotopy class (namely, the straight segment).   
Since $e$ is already a $\varphi_2'$--geodesic, these must be equal.  That is, $f'(T') = T'$.  Since both the identity and $f'$ are affine maps from $(T',\varphi_1')$ to $(T',\varphi_2')$, they must be equal.  Since $T'$ was an arbitrary triangle in the preimage of an arbitrary triangle of $\mathcal T$, it follows that $f'$ is the identity.  Therefore the identity $(S,\varphi_1) \to (S,\varphi_2)$ is affine, and hence $\varphi_1$ and $\varphi_2$ are affine equivalent.

If the holonomy has order greater than $2$, then affine-equivalent flat metrics are actually equal
(Proposition~\ref{P:affine deformation iff quadratic differential}), so  $\varphi_1=\varphi_2$.  This completes the proof.
\end{proof}

%%%%%%%%%%%%%%%%%%%%%%%%%%%%%%%%%%%%%%%%%%%%%%%%%%%%%%%%%
%%%%%%%%%%%%%%%%%%%%%%%%%%%%%%%%%%%%%%%%%%%%%%%%%%%%%%%%%

%%%%%%%%%%%%%%%%%%%%%%%%%%%%%%%%%%%%%%%%%%%%%%%%%%%%%%%%%

\section{Bounce Theorem and related results}\label{bounce}

We are now ready to prove our main theorem on billiards.  We begin by recalling the statement.

\begin{bouncetheorem} If finite sided, simply connected Euclidean polygons $P_1,P_2$ have 
$\B(P_1)=\B(P_2)$, then either $P_1,P_2$ are right-angled and affinely equivalent, or they are 
similar polygons.
\end{bouncetheorem}

\noindent
In \S\ref{sec:diag}--\ref{S:cutting seq} we describe corollaries of this result, and other applications of the ideas in the proof.

\subsection{Proof of the Bounce Theorem}

We view billiard trajectories in $P$ as unit speed piecewise geodesic paths $\tau:\mathbb{R}\to P$. 
Recall also that with respect to a cyclic labeling of the edges from 
an ordered alphabet $\mathcal{A}$,  the bounce spectrum $\B(P)$ is the set of all bounce sequences (sequences of letters from $\mathcal A$) 
that can occur along billiard trajectories.

In order to prove the Bounce Theorem, we will heavily use the results of the last section. To do this, 
we relate billiard trajectories to nonsingular geodesics on flat surfaces. 

\begin{definition}[Unfolding]
\label{unfolding}
Suppose $P$ is a simply connected Euclidean polygon with $n$ vertices, $X$ is an oriented surface of negative Euler characteristic,
and $G$ is a finite group acting faithfully on  $X$ with quotient $P$. Then the quotient map  $r: X \rightarrow P$ is called a \emph{folding map}. We pull back the flat metric on $P$ by $r$ to a metric on $X$. If the preimage of every vertex in $P$ is a cone point with angle 
more than $2\pi$, then $X$ together with its pullback metric is called an \emph{unfolding} of $P$.

 %%%%
We also give an equivalent constructive definition of an unfolding of $P$ and introduce some notation. This highlights that unfoldings are tessellated by lifts of $P$ and that each tile is a fundamental domain for the action of $G$. For the following description, 
 refer to Figure~\ref{F:unfolding} throughout.

\begin{figure}[h]
\centering
\begin{tikzpicture}
\node (img) at (0,0) {\includegraphics[width=4in]{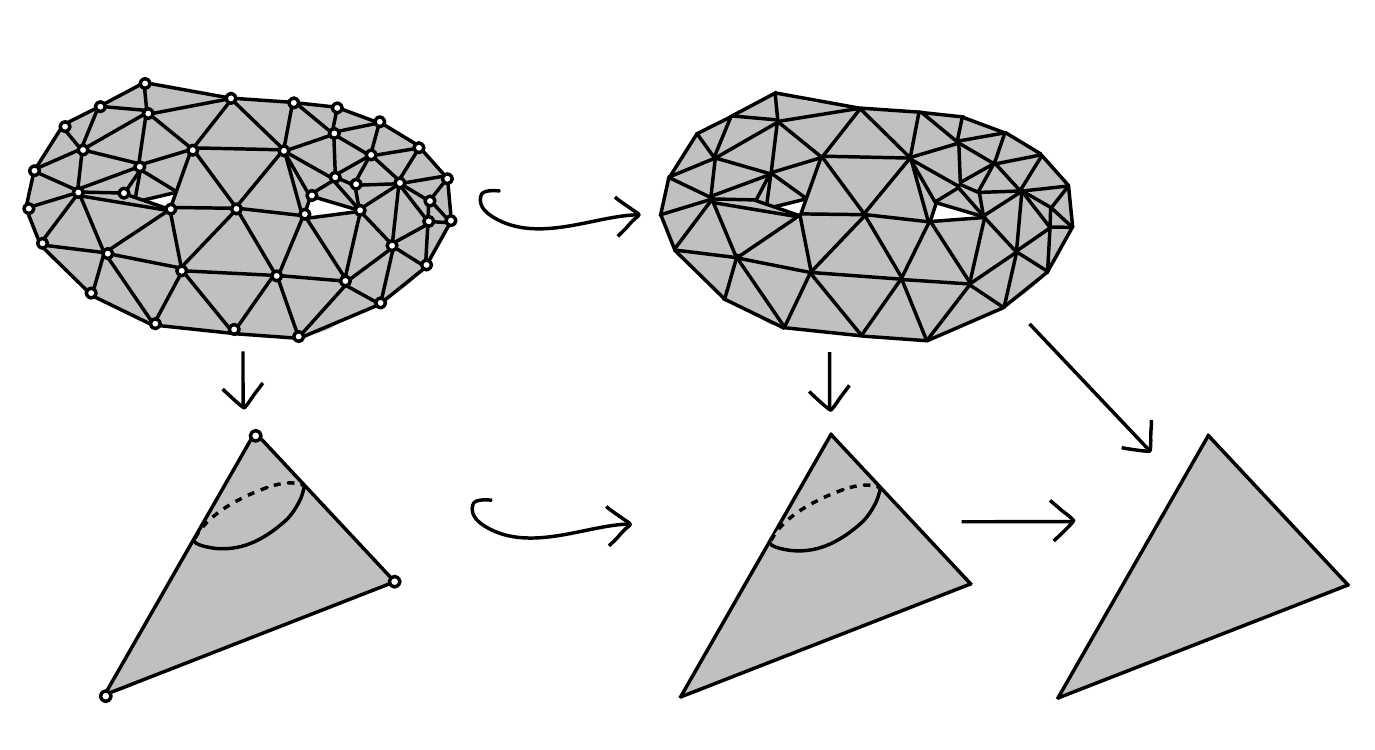}};
\node at (72pt,-25pt) {$q$};
\node at (72pt,60pt) {$X$};
\node at (-55pt,60pt) {$X^{\circ}$};
\node at (-105pt,0pt) {$\pi^{\circ}$};
\node at (20pt,-2pt) {$\pi$};
\node at (140pt,5pt) {$r=q \circ \pi$ (folding map)};
\node at (-90pt,-43pt) {$DP^{\circ}$};
\node at (30pt,-44pt) {$DP$};
\node at (110pt,-40pt) {$P$};
\end{tikzpicture}
\caption{A genus--$2$ unfolding of $P$}
\label{F:unfolding}
\end{figure}

Let $DP$ be the double of $P$, obtained by gluing two copies of $P$ along corresponding edges. There is an involution interchanging the two copies of $P$ and a natural quotient map $q: DP \rightarrow P$ given by identifying all corresponding points. Let $DP^{\circ}$ be $DP$ with the vertices of the copies of $P$ removed. Note that $DP^{\circ}$ is a topological sphere with $n$ punctures. Let $\pi^{\circ}:X^{\circ}\to DP^{\circ}$ be a finite-sheeted
regular cover such that the involution on $DP^{\circ}$ interchanging the two copies of $P$ lifts to $X^{\circ}$.
Both $X^{\circ}$ and $DP^{\circ}$ have a flat metric induced by the metric on $P$. Let $X$ be the metric completion of $X^{\circ}$.  
As above, we assume that the cone angles are more than $2\pi$.
The covering map can be extended to a regular branched covering map $\pi: X \rightarrow DP$. Then $X$ is an unfolding of $P$, and the map $r = q\circ\pi$ is a folding map. Furthermore, $G$ is the group generated by the deck group of $\pi: X\to DP$ and a lift of the involution on $DP$. In particular, it follows that for any edge of the tessellation there is an element in $G$ that locally acts as a reflection in this edge. The quotient of $X$ by $G$ is precisely $P$. 
\end{definition}
%%%%

\begin{remark}
In contrast to the theory of rational billiards,  we are considering all unfoldings, not just those that carry an induced translation structure.
The angle condition ensures that $X$ is nonpositively curved.  
We will use this construction to pass to simultaneous unfoldings of a pair of polygons $P_1,P_2$ to flat surfaces $X_1,X_2$ with 
the same underlying topology $S$.
\end{remark}

\begin{lemma}
\label{Liouville}
For an unfolding as above, if $\gamma: \mathbb{R}\to X$ is a unit speed nonsingular geodesic, then $r\circ\gamma: \mathbb{R}\to P$ is a billiard trajectory on $P$. In fact, this defines a bijection between $G$-orbits of nonsingular geodesics in $X$ and billiard trajectories on $P$.  
\end{lemma}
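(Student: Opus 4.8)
The plan is to exploit that the folding map $r=q\circ\pi$ is a local isometry on the interior of each tile of the tessellation of $X$ by copies of $P$, while near the interior of any tessellation edge $\tilde e$ there is, by construction, an element $\rho\in G$ acting locally as the reflection that fixes $\tilde e$ pointwise and interchanges its two sides. Since $r\circ\rho=r$, this pins down $r$ near $\tilde e$ as the germ of the folding of a flat half-plane along its bounding edge: in isometric coordinates with $\tilde e$ on the $x$--axis, one side of $\tilde e$ maps by an isometry $\iota$ onto a half-plane neighbourhood of $e:=r(\tilde e)$ in $P$, carrying the $x$--axis onto $e$, and the other side then maps by $\iota\circ\rho$. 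A nonsingular geodesic $\gamma\colon\mathbb{R}\to X$ avoids all cone points of $X$, which by the angle hypothesis are exactly the vertices of the tessellation; moreover $\gamma$ cannot contain a subarc of any edge $\tilde e$ (an edge is a saddle connection, so $\gamma$ would then run through a cone point), so every intersection of $\gamma$ with the (finite) set of tessellation edges is transverse and the crossing times are discrete. The first step is then to check that $r\circ\gamma$ is a billiard trajectory: between consecutive crossings it is a straight segment in $\mathrm{int}(P)$, and at a crossing time $t_0$ with $y=\gamma(t_0)\in\tilde e$, the coordinate picture shows that $r$ sends a short outgoing arc of $\gamma$ beyond $y$ (lying on the far side of $\tilde e$) to its image under $\iota\circ\rho$, i.e.\ to the $\iota$--image of the mirror image across the $x$--axis of the straight continuation of the incoming ray; since $\iota$ is an isometry taking the $x$--axis to the line containing $e$, the incoming and outgoing velocities of $r\circ\gamma$ at $r(y)$ are mirror images across that line, which is exactly optical reflection. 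As $r\circ\gamma$ also avoids the vertices of $P$ (the images of the cone points), it is a genuine nonsingular billiard trajectory.

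For surjectivity of the induced map from $G$--orbits of nonsingular geodesics to billiard trajectories, I would lift an arbitrary billiard trajectory $\tau\colon\mathbb{R}\to P$. Choosing $t_0$ with $\tau(t_0)\in\mathrm{int}(P)$, the fiber $r^{-1}(\tau(t_0))$ consists of exactly $|G|$ points, on which $G$ acts freely and transitively (because $P$ is a fundamental domain, so $G$ acts freely on the interior of each tile); fix one such point $x$ and lift $\dot\tau(t_0)$ to the direction $v=dr_x^{-1}(\dot\tau(t_0))$ at $x$. Since $r$ is a local isometry near $x$, there is a unique geodesic segment of $X$ through $x$ in direction $v$ lying over $\tau$ near $t_0$; extending this segment, at each reflection time of $\tau$ it reaches a unique point of the tessellation edge over the relevant edge of $P$ in the boundary of the current tile, and running the local analysis of the first step in reverse shows that continuing $\gamma$ straight across that edge maps precisely to the optically reflected continuation of $\tau$. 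Because the reflection times of $\tau$ are discrete, this lifting extends over all of $\mathbb{R}$ (equivalently, one invokes completeness of $X$ to get a complete geodesic and a maximal-time argument to see it never meets a cone point, using that $\tau$ never meets a vertex); the resulting curve is locally a straight Euclidean segment away from cone points, hence a geodesic for the flat metric on $X$, it is nonsingular, and $r\circ\gamma=\tau$ by construction. (Alternatively one can route this through the double: the same reflection analysis identifies nonsingular geodesics on $DP^{\circ}$ with billiard trajectories on $P$ modulo the folding involution, and these lift and descend along the honest covering $\pi^{\circ}\colon X^{\circ}\to DP^{\circ}$, which is branched only over cone points.)

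Finally, for injectivity on $G$--orbits, observe that $\gamma\mapsto r\circ\gamma$ is $G$--invariant because $r\circ h=r$ for all $h\in G$, so it descends to orbits. If $r\circ\gamma_1=r\circ\gamma_2=\tau$, pick $t$ with $\tau(t)\in\mathrm{int}(P)$; then $\gamma_1(t)$ and $\gamma_2(t)$ lie in the single $G$--orbit $r^{-1}(\tau(t))$, so $\gamma_2(t)=h(\gamma_1(t))$ for some $h\in G$, and since $dr$ is injective there and $dr(\dot\gamma_2(t))=\dot\tau(t)=dr\big(dh(\dot\gamma_1(t))\big)$ the velocities agree as well; hence $\gamma_2$ and $h\circ\gamma_1$ are nonsingular geodesics with the same initial point and velocity, so they coincide and $\gamma_1,\gamma_2$ represent the same orbit. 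The main obstacle I anticipate is the local reflection analysis at the edge crossings — setting up the coordinates carefully and verifying, in both directions, that ``straight across a tessellation edge in $X$'' corresponds exactly to ``optical reflection off the image edge in $P$'' — together with the bookkeeping ensuring that nonsingular geodesics meet the tessellation edges transversely and discretely, and that the piecewise lift in the surjectivity step genuinely assembles into a single geodesic defined on all of $\mathbb{R}$.
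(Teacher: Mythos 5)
Your proposal is correct and follows essentially the same route as the paper's own (sketched) proof: the folding map is a local isometry away from the tessellation edges, the reflection elements of $G$ at each edge translate "straight across an edge in $X$" into optical reflection in $P$, and billiard trajectories lift back by reversing this local analysis, with two lifts differing by an element of $G$. Your version simply fills in more of the local coordinate bookkeeping and the free-and-transitive action on fibers that the paper leaves implicit.
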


Viewing $r \colon X^\circ \to P^\circ$ as a Euclidean orbifold cover (and $P$ as a reflector orbifold), this lemma follows from standard orbifold generalizations of covering space theory (see \cite[Chapter 13]{Thurston:GT}). However, the proof is very constructive, so we sketch it here.

\begin{proof}[Proof of Lemma \ref{Liouville}]
Fix a nonsingular geodesic $\gamma$ on $X$. Since $r$ is a local isometry away from the edges of the tessellation by copies of $P$, 
it follows that $r\circ \gamma$ is a concatenation of geodesic segments between the edges of $P$. Moreover, we know that for every edge of the tessellation, there is an element of $G$ that acts locally as a reflection in that edge. Thus $r\circ\gamma$ has optical reflection at each point of intersection with an edge of $P$ (see Figure \ref{F:fold}), and $r\circ\gamma$ defines a billiard trajectory on $P$. 

Conversely, given a billiard trajectory, any lift to $X$ is a piecewise geodesic path and optical reflection is  precisely what is needed
to ensure that this piecewise geodesic is in fact an actual geodesic. 
Hence every nonsingular geodesic is a lift of a billiard trajectory, and by definition of unfolding, any two lifts differ by an element in $G$.  
\end{proof}

\begin{figure}[h]
\centering
\begin{tikzpicture}
\node (img) at (0,0) {\includegraphics[width=3in]{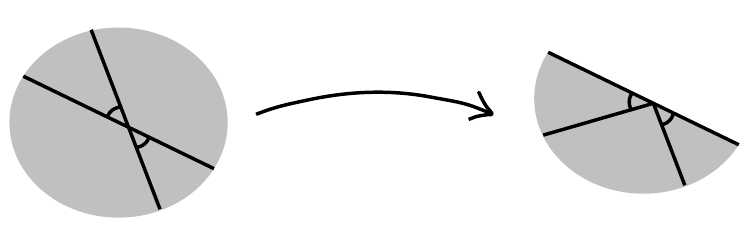}};
\node at (-60pt,-35pt) {$\gamma$};
\node at (-20pt,-15pt) {$\tilde{e}\in r^{-1}(e)$};
\draw[->] (87.2pt,-10.7pt) -- (87pt,-10pt);
\node at (0pt,20pt) {$r$};
\draw[->] (-67.5pt,-10.7pt) -- (-67.7pt,-10pt);

\node at (90pt,-30pt) {$r\circ\gamma$};
\node at (115pt,-8pt) {$e$};
\end{tikzpicture}
\caption{The image of a nonsingular geodesic $\gamma$ on $X$ under $r$ has optical reflection at points of intersection with an edge $e$ of $P$.}
\label{F:fold}
\end{figure}

\begin{proof}[Proof of Bounce Theorem]
Suppose $\B(P_1) = \B(P_2)$. By counting observed labels, this implies that $P_1$ and $P_2$ have the same number of edges. This allows us to construct a homeomorphism $f:P_1 \rightarrow P_2$ respecting the cyclic labeling from the alphabet $\mathcal{A}$. We assume the restriction of $f$ to each edge of $P_1$ is affine onto the image edge of $P_2$.

%\begin{figure}[h]
%\centering
%\begin{tikzpicture}
%\node (img) at (0,0) {\includegraphics[width=2in]{Images/two_tables.pdf}};
%\node at (-40pt,0pt) {$P_1$};
%\node at (47pt,0pt) {$P_2$};
%\node at (5pt,12pt) {$f$};
%\end{tikzpicture}
%\caption{Two billiard tables with the homeomorphism $f$, which takes vertices to vertices and labeled edges to labeled edges with the same labels.}
%\end{figure}

The map $f \colon P_1 \to P_2$ lifts to a homeomorphism $Df \colon DP_1 \to DP_2$, and we denote the restriction to the complements of the vertices by $Df^\circ \colon DP^\circ_1 \to DP^\circ_2$, which is also a homeomorphism.  For each $i = 1,2$, we choose compact, nonpositively curved unfoldings $X_i$ and denote the folding map $r_i \colon X_i \to P_i$. We assume that the preimage of every vertex by $r_i$ is a cone point of angle greater than $2\pi$. The folding maps factor through branched coverings $X_i \to DP_i$ which restrict to (unbranched) covering maps $X^\circ_i \to DP^\circ_i$, and we can therefore identify the fundamental group $\pi_1X^\circ_i$ as a subgroup $\pi_1X^\circ_i < \pi_1DP^\circ_i$.  By passing to further finite-sheeted branched covers, we can assume that
\[ Df_* (\pi_1X^\circ_1) = \pi_1X^\circ_2\]
and hence $Df$ lifts to a homeomorphism $F \colon X_1 \to X_2$ fitting into a commutative diagram
\[ \xymatrix{
X_1 \ar[d] \ar[r]^{F} & X_2 \ar[d] \\
DP_1 \ar[r]^{Df} \ar[d] & DP_2 \ar[d]\\
P_1 \ar[r]^{f} & P_2. }
\]
Recall that for each $i = 1,2$, $X_i$ has a tessellation whose tiles are copies of $P_i$. The map $F$ sends tiles of $X_1$ to tiles of $X_2$ by construction.  Furthermore, the edges of the tessellation are labeled according to the labels on the original polygon, and $F$ is affine on the edges and preserves the labeling.\\

\noindent {\bf Claim.} The map $F \colon X_1 \to X_2$ is isotopic to an affine map $F' \colon X_1 \to X_2$ such that $F' = F$ on the edges of the tessellation.

\begin{proof} Suppose $\gamma_1 \colon \R \to X_1$ is any bi-infinite nonsingular geodesic.  By Lemma~\ref{Liouville}, the projection to $P_1$ is a billiard trajectory with bounce sequence ${\bf b}$.  This sequence is precisely the sequence of labels on edges of the tessellation crossed by $\gamma_1$.  Since $F$ preserves the labels on the edges of the tessellation, $F \circ \gamma_1$ crosses the edges creating the same sequence ${\bf b}$.  Since $\B(P_1) = \B(P_2)$, this is also the bounce sequence of a billiard trajectory in  $P_2$, which lifts to a bi-infinite nonsingular geodesic $\gamma_2 \colon \R \to X_2$ (again by Lemma~\ref{Liouville}) and produces the  same sequence of labels according to the edges it crosses.   By composing with an element of the group $G$ associated to $X_2$, we can assume that $\gamma_2$ crosses the same set of edges as $F \circ \gamma_1$, in the same direction.  In particular, after reparameterizing if necessary, we can find a homotopy between $F \circ \gamma_1$ and $\gamma_2$ in $X_2^\circ$ with uniformly bounded length traces (that is, the homotopy $H_t \colon \R \to X_2^\circ$, $t \in [0,1]$, from $F \circ \gamma_1$ to $\gamma_2$ has the property that the paths $t \mapsto H_t(x)$ have uniformly bounded length, independent of $x \in \R$; in fact, the length is bounded by the diameter of $P_2$ with its induced path metric).

Now let $\varphi_1$ be the given flat metric on $S = X_1$ and $\varphi_2$ the pullback of the flat metric on $X_2$ by the map $F$.  Observe that by construction, $\varphi_1$ and $\varphi_2$ have the same set of cone points.  The previous paragraph shows that any bi-infinite nonsingular $\tilde \varphi_1$--geodesic in the universal cover $\tilde \gamma_1 \colon \R \to \tilde S$ is uniformly close to a bi-infinite nonsingular $\tilde \varphi_2$--geodesic $\tilde \gamma_2 \colon \R \to \tilde S$.    Moreover, these are homotopic in the complement of the (common) cone point set $\widetilde \Sigma$.   Therefore the metrics have the same set of endpoints of  basic geodesics: 
$\G^*_{\tilde \varphi_1} = \G^*_{\tilde \varphi_2}$.  Furthermore, since the homotopy between a nonsingular $\tilde \varphi_1$--geodesic and a corresponding $\tilde \varphi_2$--geodesic occurs within $\tilde S \setminus \tilde \Sigma$, it follows that these geodesics define the same partition of $\tilde \Sigma$.

Next, recall that our identification of chains 
(Proposition~\ref{P:optimal Omega}) 
gives us a $\pi_1S$--invariant bijection $\tilde \Sigma \to \tilde \Sigma$, and in the proof of the Support Rigidity Theorem we 
extended this to a $\pi_1S$--equivariant affine map from $(\tilde S, \tilde \varphi_1) \to (\tilde S,\tilde \varphi_2)$, $\pi_1S$--equivariantly isotopic to the identity.  In fact, since any nonsingular $\tilde \varphi_1$--geodesic defines the same partition of $\tilde \Sigma$ as a corresponding nonsingular $\tilde \varphi_2$--geodesic and since any cone point can be uniquely determined by finitely many such partitions, 
it follows from Proposition~\ref{geodcomb} (Cone point partitions are well-defined)  
that this bijection $\tilde \Sigma \to \tilde \Sigma$ is the identity.

Therefore, we have determined that the identity on $\tilde S$ is $\pi_1S$--equivariantly isotopic, rel $\tilde \Sigma$, to a $\pi_1S$--equivariant affine map $\tilde \Phi \colon (\tilde S,\tilde \varphi_1) \to (\tilde S,\tilde \varphi_2)$.  Since each edge of the (lifted) tessellation of $\tilde S$ by copies of $P_1$ is both a $\tilde \varphi_1$--saddle connection as well as a $\tilde \varphi_2$--saddle connection, and the identity is already an affine map, it follows that $\tilde \Phi$ is the identity on these edges.  Equivalently, the induced map 
$\Phi \colon (S,\varphi_1) \to (S,\varphi_2)$ is an affine map, isotopic to the identity, which is the identity on the edges of the tessellation by copies of $P_1$.   The composition $F' = F \circ \Phi \colon X_1 \to X_2$ is an affine map, is isotopic to $F$, agreeing with 
$F$ on the edges of the tessellation.
\end{proof}

With the claim in hand, we finish the proof of the Bounce Theorem.
If $P_1$ is not a right-angled polygon, then there is a vertex with interior angle not in $\frac{\pi}2 \mathbb N$;  by an appropriate choice of unfolding $X_1$ of $P_1$, we can assume that the holonomy of $X_1$ has order greater than $2$ (specifically, we can force the holonomy around the cone point corresponding to the chosen vertex to have order greater than $2$).  By the Support Rigidity Theorem, the affine map $F' \colon X_1 \to X_2$ is a similarity (here we must allow a similarity, rather than isometry, since the two metrics have not been normalized to unit area).   In this case, the map $F' \colon X_1 \to X_2$ descends to a similarity $f' \colon P_1 \to P_2$, preserving the labeling, as required.

If $P_1$ is a right-angled polygon, then the affine map $F' \colon X_1 \to X_2$ still descends to a map $f' \colon P_1 \to P_2$, but we also note that it descends to an affine map $Df' \colon DP_1 \to DP_2$ on the doubles, 
sending each of the two copies of $P_1$ onto one of the copies of $P_2$.  By rotating if necessary, we may assume that $P_1$ has  horizontal and vertical sides.  Pick a horizontal side $e_1$ of $P_1$, and rotate $P_2$ so that the corresponding side $e_2$ of $P_2$ is also horizontal.  The derivative of $f' \colon P_1 \to P_2$ therefore has the form
$\left( \begin{smallmatrix} a&b\\ 0&c \end{smallmatrix}\right)$
for some $a,b,c \in \R$.  On the other hand, from $Df'$, we see that $f'$ extends to an affine map from the reflection of $P_1$ over $e_1$ to the reflection of $P_2$ over $e_2$, and hence its derivative commutes with the reflection in the $x$--axis.  An easy computation shows that  
$b = 0$, and hence $P_2$ is also right-angled. 
\end{proof}

%%%%%%%%%%%%%%NEW TEXT FOLLOWS%%%%%%%%%%%%

\subsection{Generalized diagonals}\label{sec:diag}
Next, we prove  Corollary~\ref{C:gen diag}, which states that the (finite) bounce sequences of all generalized diagonals---billiard
trajectories starting and ending at a vertex---determine the shape of the polygon.  For the proof, we will need some additional setup.

Given a polygon $P$ with labels in $\mathcal A = \{1,\ldots,n \}$, add a "dummy'' label to form $\mathcal A_0 = \{0,1,\ldots,n\}$, and view $\B(P)$ as a subset of ${\mathcal A}^{\mathbb Z} \subset {\mathcal A}_0^{\mathbb Z}$.  A generalized diagonal $\tau \colon [a,b] \to P$ determines a finite bounce sequence, which we extend by $0$ to a bi-infinite sequence in ${\mathcal A}_0^{\mathbb Z}$, so for example the sequence $(1,2,3)$ extends to $(\ldots,0,0,1,2,3,0,0,\ldots)$.  We are interested in certain types of elements 
$(b_n) \in {\mathcal A}_0^{\mathbb Z}$, characterized by where they are zero and nonzero.  Specifically, let $I\subset \Z$ be such that $b_n \in \mathcal A$ for all $n \in I$ and $b_n = 0$ for $n \not \in I$, then we say that $(b_n)$ is {\em finite} if $I=[a,a']$, {\em forward infinite} if $I=[a,\infty)$, {\em backward infinite} if $I=(-\infty,a]$,
and {\em bi-infinite} if $I=\Z$, for appropriate $a,a'\in \Z$.
 Thus, the bounce sequence of a generalized diagonal is a finite sequence in ${\mathcal A}_0^{\mathbb Z}$ in this terminology.

We topologize $\mathcal A_0$ with the discrete topology and ${\mathcal A}_0^{\mathbb Z}$ with the product topology,  
making it  compact by Tychonoff's Theorem.  We then view the full bounce spectrum $\B(P)$ and the generalized diagonals
$\B_\Delta(P)$ as subspaces of ${\mathcal A}_0^{\mathbb Z}$.  We consider their closures
\[ \overline{\B(P)} \subset {\mathcal A}^{\mathbb Z} \mbox{ and } \overline{\B_\Delta(P)} \subset {\mathcal A}_0^{\mathbb Z}.\]
The closure on the left can be taken inside ${\mathcal A}_0^{\mathbb Z}$ instead, but since ${\mathcal A}^{\mathbb Z}$ is a closed subset, the closure is contained in this smaller set.
In fact, these closures can be constructed in a more geometric way, as we now describe.  

First take any unfolding $r \colon X \to P$ with symmetry group $G$ (so that $X/G = P$) and consider the natural tiling of $X$ by copies of $P$.  As in the proof of the Bounce Theorem, the nonsingular geodesics $\gamma$ on $X$, up to the action of $G$, correspond bijectively to billiard trajectories; recording the edge labels encountered by $\gamma$ on the copies of $P$ in $X$ produces 
its bounce sequence.

Next, we note that each basic geodesic also has an associated bounce sequence which records the labels from $P$ crossed by $\gamma$ in order.  Here we say that $\gamma$ {\em crosses a side $e$ of a copy of $P$} if either (a) it intersects the interior of $e$ transversely in a point, (b) it contains $e$ as a subsegment and switches from making angle $\pi$ on the left to angle $\pi$ on the right (or vice versa) at the endpoints of $e$, or (c) it contains an endpoint of $e$ and makes supplementary angles with that side.  Condition (c) is equivalent to saying that $\gamma$ contains the endpoint $v$ of $e$, and that $e$ lies on the side of $\gamma$ which makes angle $\pi$ at $v$.   
When $\gamma(0)$ is at a vertex, there are finitely many ways to decide which of the edge crossings (in the sense of (b) and (c)) is recorded in the index-0 position. This means that there are a uniformly bounded number of such sequences for a given $\gamma$ (with the bound depending on the angles of the polygon), and they all differ by shifts.

Now consider any sequence of bounce sequences ${\bf b}_n$ converging in ${\mathcal A}^{\mathbb Z}$ to a sequence ${\bf b} \in {\mathcal A}^{\mathbb Z}$. For each ${\bf b}_n$ choose a bi-infinite nonsingular geodesic $\gamma_n: \mathbb{R}\to X$ having ${\bf b}_n$ as its associated bounce sequence and such that $\gamma_n(0)$ lies on an edge labeled by the zeroth term of ${\bf b}_n$.  After passing to a subsequence if necessary, we can assume that $\gamma_n$ converges to a basic geodesic $\gamma$ in $X$ (or more precisely, the projection to $X$ of a basic geodesic in the universal cover).  By the construction of bounce sequences for basic geodesics, ${\bf b}$ is one of the bounce sequences for $\gamma$ (which is unique unless $\gamma(0)$ is at a vertex). 
Since every basic geodesic is a limit of nonsingular geodesics, it follows that $\overline{\B(P)}$ consists precisely of the set of bounce sequences of basic geodesics.

One can carry out a similar construction for limits of generalized diagonals, recalling that a generalized diagonal on $P$ is the image of a saddle connection on $X$.  Suppose we have a sequence  of bounce sequences ${\bf b}_n\in \B_\Delta(P)$ limiting to ${\bf b} \in {\mathcal A}_0^{\mathbb Z}$.  For each ${\bf b}_n$ consider an associated saddle connection $\gamma_n \colon [a,a'] \to X$, with $a \leq 0 \leq a'$, such that $\gamma(0)$ lies on an edge corresponding to the zeroth term of ${\bf b}_n$. Pass to a subsequence so that $\gamma_n$ converges to a geodesic $\gamma$.
If the lengths of $\gamma_n$ are uniformly bounded, then $\gamma_n$ must eventually be constant, and so ${\bf b}_n = {\bf b}$ for $n$ sufficiently large.  Since this case is trivial, we assume that the length of $\gamma_n$ tends to infinity.
In this case, $\gamma$ is either a bi-infinite basic geodesic or a basic geodesic ray (i.e., a subray of a basic geodesic).  As above, we can associate (a finite set of) bounce sequences to $\gamma$ and ${\bf b}$ is one of these.
Thus, all sequences ${\bf b}$ in $\overline{\B_\Delta(P)}$ are bounce sequences of saddle connections, basic geodesics, or basic geodesic rays (i.e., subrays of basic geodesics emanating from a cone point).

We will need the following lemma that allows us to detect, via $\B_\Delta(P)$, the bounce sequences of basic geodesics that meet cone points.  The only real difficulty in the proof comes from the awkward definition of the bounce sequence(s) for a basic geodesic described above, and the fact that periodic sequences arise as bounce sequences of both singular and nonsingular basic geodesics.

\begin{lemma} \label{L:find singular} A sequence ${\bf b} = (b_n) \in \overline{\B(P)}$ is the bounce sequence of a singular basic geodesic if and only there exists a forward infinite (respectively, backward infinite) sequence ${\bf b}' = (b_n') \in \overline{\B_\Delta(P)}$ and integer $N$ such that, $b_n = b_n'$ for all $n  > N$ (respectively, $n < N$).
\end{lemma}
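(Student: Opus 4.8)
The plan is to prove both directions by relating the finite codings of saddle connections to the coding of a basic geodesic through a cone point, using the fact (established just above) that $\overline{\B_\Delta(P)}$ consists precisely of the bounce sequences of saddle connections, basic geodesics, and basic geodesic rays emanating from a cone point, while $\overline{\B(P)}$ consists of the bounce sequences of basic geodesics. Throughout I will fix an unfolding $r\colon X\to P$ with symmetry group $G$ and the associated tiling of $X$ by copies of $P$, so that all coding data is read off from how geodesics cross edges of the tiling.

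\textbf{The ``only if'' direction.} Suppose ${\bf b}=(b_n)$ is the bounce sequence of a singular basic geodesic $\gamma$ in $X$; by Proposition~\ref{P:basic} (Structure of basic geodesics), $\gamma$ meets at least one cone point, making cone angle exactly $\pi$ on one side, and meets at most countably many cone points with the side-switching condition. Pick a cone point $\zeta$ on $\gamma$, and consider the forward subray $\gamma^+$ of $\gamma$ based at $\zeta$. Here is the key geometric input: $\gamma^+$ is a forward-infinite ray emanating from a cone point, and because $X$ is nonpositively curved and $\zeta$ is isolated among cone points, one can find a sequence of saddle connections $\delta_k$ emanating from $\zeta$ whose directions at $\zeta$ converge to the direction of $\gamma^+$, so that $\delta_k\to\gamma^+$ locally uniformly (this is essentially the density of saddle-connection directions used throughout Section~3, together with the cone-point-asymptotic structure near $\zeta$). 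Recording edge labels along $\delta_k$ and along $\gamma^+$: since $\delta_k\to\gamma^+$ and $\gamma^+$ crosses only finitely many edges in any bounded initial segment, the codings of the $\delta_k$ agree with the coding of $\gamma^+$ on arbitrarily long initial segments once $k$ is large, so the codings ${\bf b}'_k\in\B_\Delta(P)$ converge in ${\mathcal A}_0^{\mathbb Z}$ to a forward-infinite sequence ${\bf b}'\in\overline{\B_\Delta(P)}$ which is the coding of $\gamma^+$. Finally, since $\gamma^+$ is a subray of $\gamma$ and both are basic geodesics with cone point $\zeta$, the bounce sequence of $\gamma^+$ agrees with that of $\gamma$ for all indices beyond the (bounded) ambiguity window around where $\gamma$ and $\gamma^+$ diverge near $\zeta$; that is, $b_n=b_n'$ for all $n>N$ for a suitable $N$. (If $\zeta$ is chosen to be the first cone point in the forward direction, one can take $N$ determined by the index at which $\gamma$ reaches $\zeta$.) The backward case is symmetric, using the backward subray.

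\textbf{The ``if'' direction.} Conversely, suppose ${\bf b}\in\overline{\B(P)}$ and there exist a forward-infinite ${\bf b}'\in\overline{\B_\Delta(P)}$ and $N$ with $b_n=b'_n$ for $n>N$. By the characterization of $\overline{\B_\Delta(P)}$ recalled above, ${\bf b}'$ is the bounce sequence of a basic geodesic ray $\eta$ emanating from a cone point $\zeta$ (the forward-infinite, non-finite case forces a ray based at a cone point rather than a bi-infinite geodesic). Let $\gamma$ be a basic geodesic coded by ${\bf b}$. The tail agreement $b_n=b'_n$ for $n>N$ says that the forward ends of $\gamma$ and $\eta$ cross exactly the same sequence of edges of the tiling from some point on. Develop both forward ends into $\R^2$: they become straight rays crossing the same sequence of developed triangles (after matching up the tilings via an element of $G$), hence their developed directions agree, and so after translating, the forward end of $\gamma$ is eventually parallel to, and asymptotic to, $\eta$. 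By the Flat Strip Theorem~\ref{T:flat strips} applied in the universal cover (exactly as in \S\ref{S:convention1}), two asymptotic basic geodesics either are cone-point asymptotic or bound a flat half-strip contained in a cylinder; in the latter case $\gamma$ would have a tail that is a concatenation of saddle connections, and one checks its coding still forces a cone point — but more simply, since $\eta$ emanates from the cone point $\zeta$, the forward end of $\gamma$ is cone-point asymptotic to $\eta$, so $\gamma$ itself eventually runs along $\eta$ and in particular passes through $\zeta$. Thus $\gamma$ is a singular basic geodesic.

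\textbf{Main obstacle.} The delicate point is bookkeeping around the index-$0$ ambiguity: a basic geodesic through a cone point has a uniformly bounded finite set of legitimate bounce sequences differing by shifts, and periodic sequences can arise from both singular and nonsingular basic geodesics (as flagged in the lemma's preamble). So the ``if'' direction must be careful that tail-agreement with a ray genuinely forces a cone point rather than being an accident of periodicity — this is handled by the half-strip/cone-point-asymptotic dichotomy above, noting that a genuine ray (as opposed to a bi-infinite geodesic) in $\overline{\B_\Delta(P)}$ really does emanate from a cone point, and asymptotic-in-one-direction basic geodesics that are \emph{not} cone-point asymptotic bound a half-strip, hence by Theorem~\ref{T:flat strips} lie in a cylinder, whence both are cone-point asymptotic to the cylinder's boundary saddle-connection concatenation and the argument concludes. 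The ``only if'' direction's obstacle is just making precise the convergence $\delta_k\to\gamma^+$ of saddle connections to the given ray, which follows from compactness in the Chabauty–Fell topology (Proposition~\ref{P:g a homeo}-style arguments) together with Proposition~\ref{P:basic} controlling what the limit can be.
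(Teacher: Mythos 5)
Your ``only if'' direction is essentially the paper's argument: the paper simply takes the subray of the singular geodesic based at a cone point and asserts its bounce sequence lies in $\overline{\B_\Delta(P)}$, and your approximation of that ray by saddle connections emanating from the same cone point is a reasonable way to justify that assertion. The problem is in the ``if'' direction.

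There you argue that since $\eta$ emanates from the cone point $\zeta$, the forward end of $\gamma$ must be cone-point asymptotic to $\eta$, hence $\gamma$ passes through $\zeta$ and is singular. This step is false, and the conclusion is false as stated. Two forward-asymptotic geodesics are either cone-point asymptotic or bound a flat half-strip, and the fact that one of them starts at a cone point does not decide which case occurs; if $\gamma$ is nonsingular it contains no cone points at all, so it \emph{cannot} be cone-point asymptotic to $\eta$ and the half-strip case is forced, not excluded. The standard example is a core curve $\gamma$ of a cylinder: its periodic bounce sequence tail-agrees with the ray running along the cylinder's boundary starting at a cone point, yet $\gamma$ is nonsingular. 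What the lemma requires is that \emph{some} singular basic geodesic realizes ${\bf b}$, not that the given $\gamma$ is singular. The paper's proof does exactly this: assuming $\gamma$ nonsingular, it extends $\eta$ to a bi-infinite singular basic geodesic $\gamma''$, observes that $\gamma$ and $\gamma''$ cannot be cone-point asymptotic, so subrays of them bound a half-strip; by Theorem~\ref{T:flat strips} this lies in a cylinder, ${\bf b}$ is forward-periodic, the nonsingular $\gamma$ is therefore entirely contained in a flat strip, and the \emph{boundary} of that strip is a singular basic geodesic with the same bounce sequence ${\bf b}$. Your ``main obstacle'' paragraph gestures at the half-strip/cylinder dichotomy, but it misstates the asymptotic type (neither $\gamma$ nor the core of the cylinder is cone-point asymptotic to the boundary concatenation of saddle connections) and, crucially, omits the last two steps: showing that $\gamma$ lies entirely in the strip, and exhibiting the strip's boundary as the required singular geodesic with bounce sequence ${\bf b}$.
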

\begin{proof} First choose an unfolding $r \colon X \to P$, and let $p \colon \widetilde X \to X$ denote the universal covering.  Consider the tessellation of $\widetilde X$ by copies of $P$ and let $\widetilde G$ be the symmetry group, so that $\widetilde X/\widetilde G = P$.  

Given any basic geodesic $\gamma \colon I \to \widetilde X$ defined on an interval $I \subset \mathbb R$ and $\epsilon > 0$, we can perturb $\gamma$ to a path $\gamma_\epsilon \colon I \to  \widetilde X$, so that $d(\gamma_\epsilon(t),\gamma(t)) < \epsilon$ and $\gamma_\epsilon$ transversely crosses the interiors of the edges of the tessellation of $\widetilde X$ by copies of $P$ according to the bounce sequence ${\bf b}$ of $\gamma$, and is disjoint from the cone points (if $\gamma$ is nonsingular, $\gamma = \gamma_\epsilon$).  A path $\gamma_\epsilon$ with this property is unique up to homotopy in the complement of the cone points through paths with this property. See Figure~\ref{F:perturbing basic}.

\begin{center}
\begin{figure}[htb]
\begin{tikzpicture} [scale=.8]
\node at (-4,2) {$\quad$};
\draw[line width=1pt,->] (.5,1.5) -- (.75,2.25);
\draw[line width=1pt,->] (.5,1.5) -- (1.5,4.5);
\draw[line width=1pt,->] (1.5,4.5) -- (2.5,7.5);
\node at (2.5,6.6) {$\gamma$};
\begin{scope}[line width = 2pt]
\draw (1,3) -- (2,6);
\draw (1,3) -- (3,3);
\draw (3,2) -- (1,3) -- (3,5);
\draw (0,5) -- (2,6) -- (-1,7);
\draw  (2,6) -- (0,8);
\end{scope}
\draw  (.9,2.7) arc [radius=.4, start angle =-120, end angle = 80];
\node at (1.1,2.5) [below,right] {$\pi$};
\draw  (2.1,6.3) arc [radius=.4, start angle =80, end angle = 240];
\node at (2,6.5) {$\pi$};
\end{tikzpicture}
\hspace{3cm}
\begin{tikzpicture} [scale=.8]
\draw[line width=1pt,->] (.5,1.5) -- (.75,2.25);
\draw[line width=1pt,->] (.75,2.25) .. controls (1.6,3) and (1.2,4) .. (1.3,4.5);
\draw[line width=1pt] (1.3,4.5) .. controls (1.6,6) and (2.2,6.6) ..  (2.4,7.2);
\draw[line width=1pt,->] (2.4,7.2) -- (2.5,7.5);
\node at (2.5,6.6) {$\gamma_\epsilon$};
\begin{scope}[line width = 2pt]
\draw (1,3) -- (2,6);
\draw (1,3) -- (3,3);
\draw (3,2) -- (1,3) -- (3,5);
\draw (0,5) -- (2,6) -- (-1,7);
\draw  (2,6) -- (0,8);
\end{scope}
\end{tikzpicture}
\caption{Replacing a singular basic geodesic $\gamma$ with a perturbation $\gamma_\epsilon$. The sides crossed by $\gamma$ are easily read off from $\gamma_\epsilon$.} \label{F:perturbing basic}
\end{figure}
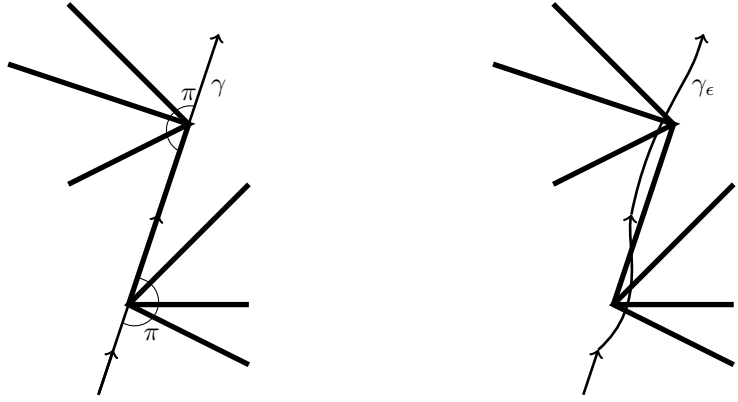
\end{center}

Next suppose $\gamma$ with bounce sequence ${\bf b} = (b_n)$ and $\gamma'$  with bounce sequence ${\bf b'} = (b_n')$ are two basic geodesics and $b_n = b_n'$ for all $n > N$, for some integer $N$.
Further suppose that $\gamma$ is nonsingular and let $\gamma'_\epsilon$ be as above.  As in the proof of the Bounce Theorem (see the proof of the claim), after applying an element of $\widetilde G$, we can assume that subrays of $\gamma$ and $\gamma'_\epsilon$ cross the same $P$-tiles in the same order.  In particular, $\gamma$ and $\gamma'_\epsilon$ are asymptotic in the forward direction.  Since $\gamma'$ and $\gamma'_\epsilon$ are $\epsilon$ apart, $\gamma$ and $\gamma'$ are forward asymptotic.  Then either $\gamma$ and $\gamma'$ are cone point asymptotic, or there are subrays that bound a half-strip.  The former cannot happen because $\gamma$ contains no cone points.  Therefore, $\gamma$ and $\gamma'$ contain subrays that bound a half-strip.  In particular, their bounce sequences are necessarily periodic in the forward direction by Theorem~\ref{T:flat strips}.  We can make a similar argument if $b_n = b_n'$ for all $n < N$.

Now we let $\gamma$ be any basic geodesic with bounce sequence ${\bf b} = (b_n) \in \overline{\B(P)}$.
If $\gamma $ is singular, then let $\gamma'$ be a basic geodesic subray of $\gamma$ starting at a cone point.  Its bounce sequence ${\bf b'} = (b_n') \in \overline{\B_\Delta(P)}$ has $b_n = b_n'$ for all $n$ sufficiently large.
Next, suppose that there is a basic geodesic ray $\gamma'$ emanating from a cone point with bounce sequence ${\bf b'} = (b_n') \in \overline{\B_\Delta(P)}$ and $b_n = b_n'$ for all $n$ sufficiently large (the argument for negative infinite sequences is similar).  We suppose that $\gamma$ is nonsingular, and prove that there is a singular geodesic with the same bounce sequence as $\gamma$.  Extend $\gamma'$ to a biinfinite singular basic geodesic $\gamma''$ with bounce sequence ${\bf b''} = (b_n'')$.  Note that $b_n'' = b_n' = b_n$ for all sufficiently large $n$.  By the previous paragraph, since $\gamma$ is nonsingular and $\gamma''$ is singular, it must be the case that there are subrays of each that bound a half-strip.  Therefore, ${\bf b}$ (and hence ${\bf b'}$) are periodic in the forward direction by Theorem~\ref{T:flat strips}.  This implies that $\gamma$ is entirely contained in a flat strip (because it is nonsingular). The boundary geodesic $\gamma'''$ of the flat strip has the same bounce sequence as $\gamma$, and is singular. This completes the proof of the lemma.
\end{proof}

We are now ready for the proof of Corollary~\ref{C:gen diag}.

\bigskip

\noindent {\bf Corollary~\ref{C:gen diag}.} {\em If two simply connected Euclidean polygons $P_1,P_2$ have 
$\B_\Delta(P_1)=\B_\Delta(P_2)$, then either $P_1,P_2$ are right-angled and affinely equivalent, or they are 
similar polygons.}
\smallskip

\begin{proof}  By the Bounce Theorem, it is enough to show that $\B_\Delta(P)$ uniquely determines $\B(P)$.  
First note that any biinfinite limit of saddle connections is easily seen to be a limit of nonsingular geodesics.  Consequently, we have
\begin{equation} \label{E:biinf saddle limits}
\overline{\B_\Delta(P)} \cap {\mathcal A}^{\mathbb Z} = \overline{\B(P)}.
\end{equation}
Each ${\bf  b} \in \overline{\B(P)}$ is the bounce sequence of a basic geodesic, so it suffices do determine which ${\bf b} \in \overline{\B(P)}$ are bounce sequences of nonsingular geodesics -- this is exactly $\B(P)$.

For this, we apply Lemma~\ref{L:find singular} and note that we can decide if ${\bf b} \in \overline{\B(P)}$ is the bounce sequence of a singular geodesic from the set $\B_\Delta(P)$.  Any ${\bf b}$ that is a bounce sequence for both a singular and nonsingular geodesic is a periodic bounce sequence, as shown in the proof of Lemma~\ref{L:find singular}.   By a result of Gal'perin--Kr\"uger--Troubetzkoy \cite{GKT}, any periodic bounce sequence is the bounce sequence of a periodic billiard trajectory, and hence of a nonsingular geodesic.  Therefore, $\B(P)$ is given by removing from $\overline{\B(P)}$ all bounce sequences of singular geodesics (determined by $\B_\Delta(P)$), then adding back in the periodic bounce sequences that were removed in the process.  
\end{proof}

%%%%%%%%%%%%%%END NEW TEXT%%%%%%%%%%%%%%%%

%%
\subsection{The case of rational billiards} 
We note that we can also deduce the following stronger version of the Bounce Theorem (first proved by Bobok-Troubetzkoy \cite{BT3} using different methods) under the assumption that all angles of $P_1$ are rational multiples of $\pi$: \emph{If two tables $P_1$ and $P_2$, with sides cyclically labeled by $\mathcal A$ have the same periodic bounce spectra, and $P_1$ is rational, then $P_1$ and $P_2$ are either similar or they are
right-angled and affinely equivalent.} Here the \emph{periodic} bounce spectrum refers to the subset $\B_{p}(P)\subset\B(P)$ 
consisting of all periodic bounce sequences. In fact, a bounce sequence is periodic if and only if the corresponding billiard trajectory is periodic, as shown in \cite{GKT}. 

To see how we can prove this result using the methods presented here, we first explain why $P_1$ being rational and having the same periodic bounce spectrum as $P_2$ implies that $P_2$ is also rational. If all angles of $P_1$ are rational multiples of $\pi$, then we can unfold  to a {\em translation surface} $S = X_1$ with flat metric $\varphi_1$ (that is, $\varphi_1$ has trivial holonomy).  We also choose a nonpositively curved unfolding $X_2$ of $P_2$ with the same underlying topological surface $S$ and let $\varphi_2$ be the pullback of the metric on $X_2$, as in the proof of the Bounce Theorem.  Since $X_1$ is a translation surface, a result of Vorobets \cite{vorobets} shows that tangent vectors to core geodesics of cylinders are dense in $T^1X_1$ (see also \cite{MasurClosed}).  Consequently, the endpoints of the lifts of cylinder curves are dense in the support of the Liouville current $L_{\varphi_{1}}$.  Since $B_p(P_1)$ determines the cylinder curves in $X_1$, it also determines $\supp(L_{\varphi_{1}})$, and consequently all its chains.  As pointed out in \cite[Section 6]{BL}, the chains also determine the angles of the cone points, and hence we know all the cone points of $\varphi_1$ as well as their cone angles.  Although we do not know, a priori, that endpoints of lifts of cylinder curves for $\varphi_2$ are dense in $\supp(L_{\varphi_2})$, we do know that the closure of these endpoints is contained in $\supp(L_{\varphi_2})$.  Since $\B_p(P_1) = \B_p(P_2)$, it follows that the two metrics have the same set of cylinder curves, and hence $\supp(L_{\varphi_1}) \subset \supp(L_{\varphi_2})$.

From this one can show that a generic\footnote{Chains are defined in \cite{BL} in terms of pairs of geodesics in $\supp(L_{\varphi})$ which are asymptotic to each other in one direction, with no other geodesics from $\supp(L_{\varphi})$ between them.  It is conceivable that two such geodesics in the set $\supp(L_{\varphi_1}) \subset \supp(L_{\varphi_2})$ may nonetheless have a geodesic in $\supp(L_{\varphi_2})$ between them.  This can happen only countably many times (i.e., {\em non-generically}) as a consequence of Theorem~\ref{T:flat strips}, and so we may ignore these.} chain for the $\varphi_1$-metric is also a chain for the $\varphi_2$-metric, giving a $\pi_1S$--equivariant  {\em injection} from the cone points of $\tilde \varphi_1$  to those of $\tilde \varphi_2$ in $\tilde S$.
Therefore, there is a cone-angle-preserving injection from the cone points of $\varphi_1$ to those of $\varphi_2$.  By Gauss-Bonnet, this accounts for all cone points, and thus the injection is actually a bijection.
Therefore the angles of $P_2$ must be the same as those of $P_1$, and in particular all angles of $P_2$ are rational multiples of $\pi$ as well.

Once we know both tables are rational, applying \cite{vorobets} again, we can deduce that $\supp(L_{\varphi_1}) = \supp(L_{\varphi_2})$.  Now we can follow the proof of the Bounce Theorem above verbatim to complete the argument.

\subsection{Cutting sequences} \label{S:cutting seq}
We end by explaining how the application of the Support Rigidity Theorem to the Bounce Theorem can be extended to other settings, in particular to the context of \emph{cutting sequences}. 
Recall that a cutting sequence is a symbolic coding of linear trajectories on translation surfaces, constructed by gluing polygons (see, for example, \cite{SmillieUlcigrai, DavPasUlc}). More generally, suppose we have a Euclidean polygon $P$ and we glue pairs of edges by 
isometries, obtaining an orientable surface $S$ with cone metric $\varphi$.  Given a cyclic labeling of the edges of $P$, the side pairing defines an equivalence relation on the labels.
Consider a bi-infinite geodesic $\gamma$ on $S$ that avoids the vertices of $P$. Recording the labels crossed by $\gamma$  
determines a bi-infinite symbolic coding of the geodesic, which we call its cutting sequence.  We note that each edge actually has two labels (since the edges are paired) and we record the one at which we enter, not the one from which we exit; see Figure~\ref{octagons}.
We note that our definition of a cutting sequence is different from the usual definition since this double labeling essentially records the \emph{direction} in which a geodesic crosses an edge instead of just the edge it crosses.

Let $P_1, P_2$ be two cyclically labeled, oriented polygons with side pairings given by isometries  as above.
We say that $P_1$ and $P_2$ are \emph{combinatorially equivalent} if they have the same set of labels, and if the side pairings induce the same equivalence relations; see Figure \ref{octagons}.
We claim that if we have combinatorially equivalent polygons with the same set of cutting sequences, then the polygons can only differ by an affine deformation. To see this, note that combinatorial equivalence implies that there is a homeomorphism $P_1 \to P_2$ respecting the labelings and pairings, and hence a homeomorphism of the resulting glued surfaces $f \colon S_1 \to S_2$.
If the corresponding two metrics $\varphi_1,\varphi_2$ have all cone angles greater than $2\pi$, then since the cutting sequences determine the bi-infinite geodesics, the Support Rigidity Theorem applies as in the proof of the Bounce Theorem above to produce an affine map from $(S_1,\varphi_1)$ to $(S_2,\varphi_2)$ that restricts to an affine homeomorphism $P_1 \to P_2$.  If not, we can pass to finite-sheeted covers of $S_1$ and $S_2$ that are non-positively curved, and run the same argument there.

\begin{theorem}
Let $P_1, P_2$ be two labeled, oriented, simply connected polygons with side pairings given by isometries. If $P_1$ and $P_2$ are combinatorially equivalent and have the same set of cutting sequences, then they are affinely equivalent.  \qed
\end{theorem}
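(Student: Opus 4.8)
The plan is to run the proof of the Bounce Theorem essentially verbatim, with the single side-paired polygon playing the role of the tessellation by copies of $P$. First I would fix the topology: combinatorial equivalence gives a label- and pairing-preserving homeomorphism $P_1 \to P_2$ that is affine on each edge, and this descends to a homeomorphism $f \colon S_1 \to S_2$ of the glued surfaces taking the vertex images of $P_1$ to those of $P_2$. Using $f$ to identify the underlying topological surface $S$, we obtain two cone metrics $\varphi_1,\varphi_2$ on $S$ sharing a common cell structure with a single $2$--cell (the polygon) whose edges are saddle connections for \emph{both} metrics, since each edge of $P_i$ is a straight segment between vertices with no vertex in its interior. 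Lift this cell structure to a $\pi_1S$--invariant tiling of $\tilde S$ by copies of $P$.

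I would treat first the case in which all cone angles of $\varphi_1$ and $\varphi_2$ exceed $2\pi$, so that $\varphi_1,\varphi_2 \in \Flat(S)$ (and, once there is a cone point, $S$ has genus at least $2$ by Gauss--Bonnet; the torus case, with no cone points, is classical and I set it aside). Given a bi-infinite nonsingular $\tilde\varphi_1$--geodesic $\tilde\gamma_1$, its projection has a cutting sequence $\mathbf c$; by hypothesis $\mathbf c$ is also the cutting sequence of a $\varphi_2$--geodesic, which we lift to a nonsingular $\tilde\varphi_2$--geodesic $\tilde\gamma_2$. After applying a deck transformation we may arrange that $\tilde\gamma_1$ and $\tilde\gamma_2$ cross exactly the same $P$--tiles in the same order, entering through the correspondingly labeled edges --- here the directional double labeling is exactly what removes the ambiguity. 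Then $\tilde\gamma_1$ and $\tilde\gamma_2$ stay within bounded Hausdorff distance (at most the diameter of a tile) and are homotopic in $\tilde S\setminus\tilde\Sigma$, so they have the same endpoints at infinity and determine the same partition of $\tilde\Sigma$. Letting $\tilde\gamma_1$ range over all nonsingular $\tilde\varphi_1$--geodesics and using symmetry, we conclude $\G^*_{\tilde\varphi_1} = \G^*_{\tilde\varphi_2}$, that is, $\supp(L_{\varphi_1}) = \supp(L_{\varphi_2})$, with corresponding nonsingular geodesics inducing the same cone-point partitions.

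Now I would invoke the Support Rigidity Theorem and, more precisely, the construction from its proof that is reused in the proof of the Bounce Theorem: the identity on $\tilde S$ is $\pi_1S$--equivariantly isotopic, rel $\tilde\Sigma$, to a $\pi_1S$--equivariant affine map $\tilde\Phi\colon(\tilde S,\tilde\varphi_1)\to(\tilde S,\tilde\varphi_2)$. Since each edge of the lifted polygon decomposition is a saddle connection for both metrics and the identity is already affine there, $\tilde\Phi$ is the identity on these edges; it descends to an affine map $\Phi\colon(S,\varphi_1)\to(S,\varphi_2)$, isotopic to the identity and equal to the identity on the edges. Being affine, orientation preserving, and of degree one, $\Phi$ is a homeomorphism carrying the single $2$--cell $P_1$ onto the single $2$--cell $P_2$, hence restricts to an affine homeomorphism $P_1\to P_2$, the desired affine equivalence. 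In the remaining case, where some cone angle fails to exceed $2\pi$, I would pass to finite-sheeted branched covers just as in the Bounce Theorem: choose covers $S_1'\to S_1$ and $S_2'\to S_2$ corresponding to a common finite-index subgroup under $f_*$, of high enough degree that the pulled-back metrics are non-positively curved; then $f$ lifts to a homeomorphism $S_1'\to S_2'$, geodesics on the covers project to geodesics with equal cutting sequences, and the argument above applies upstairs. The resulting affine map commutes with the deck groups and so descends to $S_1\to S_2$, again restricting to an affine homeomorphism $P_1\to P_2$.

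The one point that needs genuine care --- and the reason a cutting sequence is defined here with directional (double) labels rather than the usual single labels --- is the second step: extracting from the bare equality of cutting-sequence sets the input the Support Rigidity Theorem requires, namely that a nonsingular $\tilde\varphi_1$--geodesic and a $\tilde\varphi_2$--geodesic realizing the same cutting sequence can be moved by a deck transformation to cross exactly the same tiles in the same order, hence stay uniformly Hausdorff close and homotopic in $\tilde S\setminus\tilde\Sigma$. With single labels this match-up is genuinely ambiguous; with directional labels it is forced. Everything after that is a transcription of the Bounce Theorem argument, with the polygon decomposition of $S$ in place of the tessellation of the unfolding.
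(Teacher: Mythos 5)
Your proposal is correct and follows essentially the same route as the paper, which itself only sketches this argument: identify the glued surfaces via the combinatorial equivalence, use the directional (double) labels to match each nonsingular geodesic in one metric with one in the other crossing the same tiles, conclude $\supp(L_{\varphi_1})=\supp(L_{\varphi_2})$, and apply the Support Rigidity Theorem, passing to nonpositively curved finite-sheeted branched covers when some cone angle is not greater than $2\pi$. You in fact supply more detail than the paper does, correctly isolating the role of the directional labeling as the one point of genuine care and setting aside the cone-point-free torus case as classical.
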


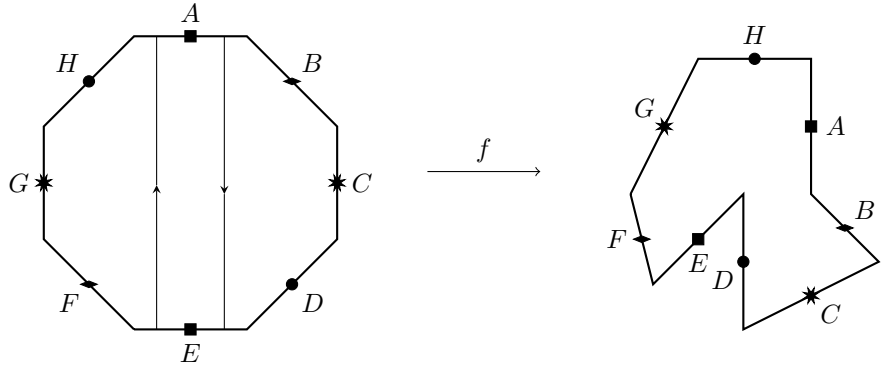
\begin{figure}[h]
\centering 
\begin{tikzpicture} [scale = .3]
\draw [thick] (4,0) -- (9,0) -- (13,4) -- (13,9) -- (9,13) -- (4,13) -- (0,9) -- (0,4) -- (4,0); %%This is the regular octagon

\node [draw,fill=black,regular polygon, regular polygon sides=4, inner sep=1.5pt] at (6.5,0) {};
\node [draw,fill=black,regular polygon, regular polygon sides=4, inner sep=1.5pt] at (6.5,13) {};

\node [draw,fill=black,star, star points=8, star point ratio=.4, inner sep=2.2pt] at (0,6.5) {};
\node [draw,fill=black,star, star points=8, star point ratio=.4, inner sep=2.2pt] at (13,6.5) {};
\node [draw,fill=black,star, star points=2, star point ratio=.4, inner sep=2.2pt] at (2,2)  {};
\node [draw,fill=black,star, star points=2, star point ratio=.4, inner sep=2.2pt] at (11,11)  {};
\node [draw,fill=black,circle, inner sep=1.5pt]  at (11,2)  {};
\node [draw,fill=black,circle, inner sep=1.5pt]  at (2,11)  {};
%%%%Labels on regular octagon
\node at (6.5,13) [above=2pt] {$A$};
\node at (11,11) [above right] {$B$};
\node at (13,6.5) [right=2pt] {$C$};
\node at (11,2) [below right] {$D$};
\node at (6.5,0) [below=2pt] {$E$};
\node at (2,2) [below left] {$F$};
\node at (0,6.5) [left=2pt] {$G$};
\node at (2,11) [above left] {$H$};

\draw [thick] (26,6) -- (29,12) -- (34,12) -- (34,6) -- (37,3) -- (31,0) -- (31,6) -- (27,2) -- (26,6); %% This is the funny octagon
%single slash
\node [draw,fill=black,regular polygon, regular polygon sides=4, inner sep=1.5pt] at (29,4) {};
\node [draw,fill=black,regular polygon, regular polygon sides=4, inner sep=1.5pt] at (34,9) {};

%\node [draw,fill=black,regular polygon, regular polygon sides=3, inner sep=1pt] at (35.5,4.5) {};
\node [draw,fill=black,star, star points=2, star point ratio=.4, inner sep=2.2pt] at (35.5,4.5) {};
\node [draw,fill=black,star, star points=2, star point ratio=.4, inner sep=2.2pt] at  (26.5,4)  {};

\node [draw,fill=black,star, star points=8, star point ratio=.4, inner sep=2.2pt] at (34,1.5) {};
\node [draw,fill=black,star, star points=8, star point ratio=.4, inner sep=2.2pt] at (27.5,9) {};

\node [draw,fill=black,circle, inner sep=1.5pt]  at (31,3)  {};
\node [draw,fill=black,circle, inner sep=1.5pt]  at (31.5,12)  {};

\draw[->] (17,7) -- node [above] {$f$} (22,7);
%%%%%Labels on irregular octagon
\node at (31.5,12) [above=2pt] {$H$};
\node at (34,9) [right=2pt] {$A$};
\node at (35.5,4.5) [above right] {$B$};
\node at (34,1.5) [below right] {$C$};
\node at (31,3) [below left] {$D$};
\node at (29,4) [below=2pt] {$E$};
\node at (26.5,4) [left=2pt] {$F$};
\node at (27.5,9) [above left] {$G$};
%%%%%%%%%%%%% periodic trajectories
\draw[-stealth] (5,0) -- (5,6.4);
\draw (5,6.4) -- (5,13);
\draw[-stealth] (8,13) -- (8,6);
\draw (8,6) -- (8,0);
\end{tikzpicture}
\caption{Two combinatorially equivalent octagons and the bijection $f$. There are two periodic, vertical trajectories shown in the left octagon:   one has cutting sequence $\{\ldots, A,A,A,\ldots \}$ and the other has  cutting sequence $\{ \ldots, E,E,E, \ldots \}$. }\label{octagons}
\end{figure}

  \bibliographystyle{alpha}
  \bibliography{main}

\end{document}